\documentclass[11pt]{article}

\usepackage{enumerate}
\usepackage{amsmath}
\usepackage{amssymb,latexsym}
\usepackage{amsthm}
\usepackage{comment}
\usepackage{color}
\usepackage{faktor}
\usepackage{graphicx}
\usepackage{amscd}
\usepackage{hyperref}
\usepackage{multicol}
\usepackage{textgreek}
\usepackage{mathrsfs}
\usepackage{mathtools}

\title{$q$-Polymatroids associated with restricted rank-metric codes}
\author{Eimear Byrne, Giovanni Longobardi and Rocco Trombetti}
\date{}

\newcommand{\numberset}{\mathbb}
\newcommand{\N}{\numberset{N}}

\newcommand{\Q}{\numberset{Q}}

\newcommand{\F}{\numberset{F}}

\newcommand{\C}{\mathcal{C}}

\newcommand{\M}{\mathcal{M}}

\newcommand{\X}{\mathcal{X}}

\newcommand{\Fq}{\F_q}

\newcommand{\colsp}{\textnormal{colsp}}

\newcommand{\Tr}{\textnormal{Tr}}

\newcommand{\rk}{\textnormal{rk}}

\newcommand{\maxrk}{\mathrm{maxrk}}
\newcommand{\ee}{\mathbf{e}}

\newcommand{\fqn}{\mathbb{F}_{q^n}}

\DeclareMathOperator{\zero}{\mathbf{0}}

\DeclareMathOperator{\GL}{GL}

\DeclareMathOperator{\Aut}{Aut}

\DeclareMathOperator{\rowsp}{rowsp}

\newtheorem{theorem}{Theorem}[section]
\newtheorem{lemma}[theorem]{Lemma}
\newtheorem{corollary}[theorem]{Corollary}
\newtheorem{proposition}[theorem]{Proposition}

\DeclareMathOperator{\tr}{Tr}

\DeclareMathOperator{\cC}{\mathcal{C}}
\DeclareMathOperator{\cD}{\mathcal{D}}
\DeclareMathOperator{\cS}{\mathcal{S}}

\DeclareMathOperator{\End}{End}
\DeclareMathOperator{\Sym}{\mathrm{Sym}}
\DeclareMathOperator{\Alt}{\mathrm{Alt}}
\DeclareMathOperator{\Her}{\mathrm{Her}}
\DeclareMathOperator{\cX}{\mathcal{X}}

\theoremstyle{definition}
\newtheorem{definition}[theorem]{Definition}

\newtheorem{remark}[theorem]{Remark}

\begin{document}

\maketitle

\begin{abstract}
In this article, we study polymatroids that are representable by means of linear {\it restricted rank-metric codes}, namely, by subspaces of the space of alternating, symmetric, or Hermitian square matrices endowed with the rank metric. More precisely, we characterize the rank function defining these polymatroids and establish sufficient conditions on the relevant parameters under which it is fully determined. We show that there are several differences in compared to the behaviour of $q$-polymatroids of unrestricted matrix codes. 
    
\end{abstract}

\bigskip
\noindent {\bf MSC 2020:} 05B35, 94B05, 15B33

\bigskip
\noindent {\bf Keywords:} Matrix spaces over finite fields, $q$-Polymatroids, Rank-metric codes.

\section{Introduction}

While the earliest papers on the theory of rank-metric codes go back some decades (see \cite{de78,gabidulin1985theory,RothMaximumArrayCodes}) the topic remains an active area of research with applications in network coding, distributed storage, and secret sharing schemes \cite{dinesen2025secretsharingrankmetric,kk08,skk08}. As algebraic and combinatorial objects in their own right, they have inspired numerous avenues of research and there remain many open problems; see \cite{bartzsurvey} for an extensive survey on the topic.

In the unrestricted case, a rank-metric code is an $\Fq$-subspace of matrices for which the underlying ambient space has been endowed with the rank metric (the rank distance between a pair of matrices is simply the rank of their difference). In addition to this case, Delsarte also studied codes with restrictions, such as spaces of alternating matrices \cite{delsarte1975}. More recent results in the restricted case include studies of rank-metric codes as symmetric bilinear forms and as subspaces of Hermitian matrices \cite{COSSIDENTE2022105597,DUMAS2011504,longobardi2020automorphism, schmidt_symmetric_2015,schmidt_hermitian_2018,tangzhou}.
We will use the notation $\Alt_{n,q}$ and $\Sym_{n,q}$ to denote the subspaces of $\F_q^{n \times n}$ made up of alternating and symmetric matrices, respectively, and by $\Her_{n,q}$ the $\F_q$-subspaces of Hermitian matrices in $\F_{q^2}^{n \times n}$. We often use $\cX_{n,q}$ to denote any one of the above mentioned families of codes.

Studies of invariants of rank-metric codes have made many appearances in the scientific literature \cite{Kurihara+,ravanticode}. Such invariants are crucial to our understanding of codes and serve to distinguish inequivalent codes. A number of coding theoretic invariants and behaviours are actually determined by an associated object called a $q$-polymatroid. A $q$-polymatroid is a special function defined on a subspace lattice. To make the connection to a rank-metric code, we view a code as an object that is supported on a finite dimensional vector space over $\Fq$. In this paper, most often the support of a codeword will be its column space as a subspace of $\F_{q^\ell}^n$ for $\ell \in \{1,2\}$ and the support of a code is the vector space sum of the supports of its elements. Then a $q$-polymatroid function on the lattice of subspaces of $\Fq^n$ is determined by the dimensions of the shortened subcodes of the given code.

While most $q$-polymatroids have no associated rank-metric code, those that are, are called representable.
The characteristic polynomial and the co-boundary polynomial of a $q$-polymatroid determine the binomial moments and weight enumerator of a code in the representable case via evaluations. MacWilliams duality for rank-metric codes is easily described from these or from the rank-generating polynomial of a $q$-polymatroid \cite{shiromoto19, byrne2021weighted}. Generalized weights are related to the rank function of the flats of its associated $q$-polymatroid \cite{gorla2019rank,GLJ}. They are also defined in terms of the optimal anticodes of $\Fq^{n \times n}$, which are the codes of maximal dimension among those with a fixed maximum rank. These have been characterized in \cite{meshulam1985maximal} as those subspaces of
$\Fq^{n \times n}$ whose support is contained in a fixed subspace of 
$\F_q^n$. In the context of the work presented here, we examine the behaviour of a special class of anticodes in $\cX_{n,q}$, with a focus on the $q$-polymatroids associated with them.
 More generally, we consider $q$-polymatroids that can be represented by codes in $\cX_{n,q}$. In the unrestricted case, the $q$-polymatroid of a maximum rank distance code in $\Fq^{n \times n}$ is completely determined and is (up to scaling) the uniform $q$-matroid. However, as we show here, for a maximum distance code in $\cX_{n,q}$ the rank function of its $q$-polymatroid is not necessarily determined.
 
 This paper is organised as follows. In Section \ref{sec:preliminaries} we collect preliminary notions and results on restricted rank metric codes, determine lower bounds on the minimum distance of their duals and present introductory material on $q$-polymatroids.
In Section \ref{qpolyrestricted}, we prove our main results. 
For a given subspace $U \leq \F_{q^\ell}^n$, we describe the $q$-polymatroid arising from $\cX_{n,q}(U)$, which is the subspace of $\cX_{n,q}$ whose matrices all have support contained in $U$.
Moreover, we compute the maximum rank of the elements in the duals of such spaces and through this obtain a description of the rank function of $q$-polymatroids arising from $d$-codes (those of minimum rank distance $d$). We consider the $q$-polymatroids of a class of extremal codes in $\cX_{n,q}$ called maximum $d$-codes. Using a description of such codes in terms of linearized polynomials, we show that for $d$ close to $n$, the rank functions of their $q$-polymatroids are often determined or nearly determined.
Finally, in Section \eqref{last_section} we establish some results concerning the connection between the dual of a $d$-code and the dual of the relevant $q$-polymatroid resulting from the code. We show that while a relation exists between the dual of the $q$-polymatroid of a code in $\cX_{n,q}$ and the $q$-polymatroid of the dual code in $\cX_{n,q}$, in contrast to the unrestricted case, in general these two objects do not coincide.

\section{Preliminaries}\label{sec:preliminaries}
In this section, we will fix the notation and collect some preliminary results needed for the remainder of this article.

\subsection{Rank-metric codes}\label{sec:codes}

Let $q$ be a prime power and denote by $\F_q$ the finite field with $q$ elements. Let $\F_{q}^{m \times n}$ denote the space of $m \times n$ matrices with entries over $\F_{q}$. This is a metric space if equipped with the \textit{rank distance} defined as 
\begin{equation}
d_{\rk}(A,B) := \rk(A - B),
\end{equation}
where $A,B \in \F_{q}^{m \times n}$.

A  subset $\cC  \subset \F_{q}^{m \times n}$ is called a \textit{rank-metric code}. If $|\cC | >1$, the \textit{minimum distance} of $\cC$ is defined to be
\begin{equation*}
d=d_{\rk}(\cC): = \min_{\underset{M \neq N}{M, N \in \cC}}
d_{\rk}(M,N).  
\end{equation*}
Its \textit{minimum weight} is defined to be
$\textup{w}_{\rk}(\cC):=\min_{\underset{M \neq 0}{M \in \cC}} \rk(M)$ and its \textit{maximum rank} is defined to be
$\maxrk(\cC):=\max_{\underset{M \neq 0}{M \in \cC}} \rk(M).$
The code $\cC$ is said to be \textit{additive} if it is a subgroup of the group $(\F_q^{m \times n}, +)$ and $\F_q$-\textit{linear} if  it is an $\F_q$-subspace of $\F^{m \times n}_q$.  In this case $\dim \cC$ denotes its dimension over $\F_q$. If $\cC$ is additive then its minimum distance and minimum weight coincide.

If $\cC$ is a rank-metric code, then the code $\cC^\top:=\{A^t : A \in \cC\}$, consisting of the transposes of the matrices belonging to $\cC$, is called the \textit{adjoint code} of $\cC$. 

\medskip
\noindent Let $\chi$ be a non-trivial character of the additive group $(\F_{q},+)$ into ${\mathbb C}$. 
We define an inner product
on $\F_{q}^{m \times n}$ into the complex numbers by
\begin{equation*}\label{unrestricted-scalarproduct}
\langle A, B \rangle := \chi(\tr(AB^t)), 
\end{equation*}
for all $A,B \in \F_{q}^{m \times n}$ where $\mathrm{Tr}(\cdot)$ is the trace of a square matrix.
Then, the \textit{Delsarte-dual} of an additive code $\cC$ is defined as 
\begin{equation}\label{def:delsarte_dual}
    \cC^{\perp}=\{N \in \F_{q}^{m \times n} \colon  \langle M , N \rangle =1\textnormal{ for all } M \in \cC\}.
\end{equation}
Generally, for the applications in classical coding theory \cite{kk08,4608992}, given two positive
integers $m, n$ and $1 \leq d \leq \min\{m, n\}$, it is desirable to have codes with mininum distance $d$
as large as possible.
In \cite{de78}, Delsarte proved that the size of each rank-metric code must satisfy an upper
bound, the so-called \textit{Singleton-like bound}. Precisely,
if $\cC \subset \F_q^{m \times n}$ is a code with minimum distance
$d$, then
\begin{equation}\label{eq:sing}
    | \cC| \leq q^{\max\{m,n\}(\min\{m,n\}-d+1)}.
\end{equation}
The code, $\cC$ is called a \textit{maximum rank distance code} if equality holds in (\ref{eq:sing}). This is indicated by saying that $\cC$ is an \textit{MRD code}. The first families of MRD codes were found by Delsarte in \cite{de78},  Gabidulin in \cite{gabidulin1985theory} and Roth in \cite{RothMaximumArrayCodes}.\\
Let $u \in \{1,2,\ldots,m-1\}$ and $\pi_u :
\F_q^{m \times n} \longrightarrow \F_q^{(m-u)\times n}$ be the projection onto the last $m - u$
rows. The \textit{puncturing of a code $\cC \subset \F_q^{m \times n}$ with respect to $A \in \GL(m,q)$ and $ 1 \leq u \leq m-1$} is defined as
\begin{equation*}
\pi_u(A\cC) =\{\pi_u(AC) \,\colon\, C \in \cC\} \subset  \F_q^{(m-u) \times n}.
\end{equation*}
If $m \leq n$ and $\cC$ is an MRD code, then for any $A \in \GL(m,q)$ and for any $1 \leq u \leq m-1$ the punctured
code $\pi_u(A \cC)$ is also MRD, see \cite[Corollary 35]{covering-byrne-rav}.\\
Let $P \in \GL(m, q), Q\in \GL(n, q)$, $R\in \F_q^{m \times n}$ and $\tau \in \Aut(\F_q)$. An \textit{isometry} of the metric space $(\F_{q}^{m \times n},d_{\rk})$ is a map of the type
\begin{equation}\label{eq:generalequivalenbce}
    \Phi_{P,Q,R,\tau}: X \in \F_{q}^{m \times n} \longrightarrow PX^\tau Q + R \in \F_{q}^{m \times n},
\end{equation}
where $X^\tau$ is the matrix obtained by applying $\tau$ to each entry of $X$.
The group of isometries acts transitively on $\F_{q}^{m \times n}$, see \cite[Proposition 3.1]{Wan96}.
Two rank-metric codes $\cC$ and $\cC'$ are \textit{equivalent} if there exists an isometry $\Phi_{P,Q,R,\tau}$ such that  $\cC'=\Phi_{P,Q,R,\tau}(\cC)$
and, if $n=m$, also if
$\cC'=\Phi_{P,Q,R,\tau}(\cC^\top)$;
see for instance
\cite{berger2003isometries, Morrison20147035}. When $\cC$ and $\cC'$ are additive, we may assume that the matrix $R$ in \eqref{eq:generalequivalenbce} is the zero matrix.

\medskip

For our purposes, we will focus on rank-metric codes of squares matrices of order $n$. We will introduce an equivalent setting where these rank-metric codes can be studied. 

Let $1 \leq s \leq n$  be an integer such that $\gcd(s,n)=1$. It is well-known that for any endomorphism $\varphi(x) \in \mathrm{End}_{\F_q}(\F_{q^n})$, there is a unique polynomial 
\begin{equation}\label{lin-pol}
 f=   \sum_{i=0}^{n-1}a_i X^{q^{si}} \in \F_{q^n}[X],
\end{equation}
such that the map $\varphi(x)$ is equal to the endomorphism of $\F_{q^n}$
\begin{equation}
x \in \F_{q^n} \longmapsto   \sum_{i=0}^{n-1}a_i x^{q^{si}} \in \F_{q^n},
\end{equation}
which will be denoted by $f(x)$, see  \cite{Lidl1997}. A polynomial as in \eqref{lin-pol} is called \textit{linearized polynomial} (or a $q^s$-\textit{polynomial}) \textit{with coefficients over} $\F_{q^n}$ and the maximum integer $r$ such that $a_r \neq 0$ is called $q^s$-\emph{degree} of $f$, which is denoted by $\deg_{q^s} f$. The set of roots and the set of values taken by $f(x)$, namely $\ker f$ and $\mathrm{im}f $, are both $\mathbb{F}_q$-subspaces of $\mathbb{F}_{q^n}$. By the \textit{rank} of $ f$, denoted as $\rk\,f$, we mean the dimension of the $ \mathbb{F}_q$-vector space $\mathrm{im} f$.
   The set
$$\tilde{\mathcal{L}}_{n,q,s}[X]= \left \{\sum_{i=0}^{n-1}a_iX^{q^{si}} \colon a_i \in \F_{q^n} \right \}$$
of linearized polynomials with $q^s$-degree at most $n-1$ is a vector space over $\F_{q^n}$ with
respect to the usual sum and scalar multiplication. This algebraic structure, endowed with the composition $\circ$ of polynomials modulo $X^{q^{sn}}-X$, is an algebra over $\F_q$ isomorphic to $\End_{\F_q}(\F_{q^n})$ and hence to $\F_{q}^{n \times n}$.\\ 
Therefore, the notions recalled before can be reformulated in terms of $q^s$-polynomials. Indeed, a rank-metric code can be seen as a subset ${\cal C}$ of $\tilde{\mathcal{L}}_{n,q,s}[X]$ and its \emph{minimum distance}, if $| \cC| > 1$, is 
	\begin{equation*}
		d(\cC)=\min_{\underset{ f,g \in \cC}{ f\neq g}}  \rk (f-g).
	\end{equation*}

The \textit{adjoint polynomial} $f^{\top}$ of $f=\sum_{i=0}a_i X^{q^{si}} \in \tilde{\mathcal{L}}_{n,q,s}[X]$ is defined as 
\begin{equation*}
f^\top=\sum_{i=0}^{n-1}a^{q^{s(n-i)}}_iX^{q^{s(n-i)}}
\end{equation*}
and hence the adjoint code of $\cC$ turns out to be 
$$\cC^\top = \{ f^\top \in \tilde{\mathcal{L}}_{n,q,s}[X]: f \in \cC\}.$$
Let $r$ be a divisor of $n$ and define $\mathrm{Tr}_{q^n/q^r}(x)=\sum^{n/r-1}_{i=0}x^{q^{ir}}$ the \textit{trace} of an element of $\F_{q^n}$ over $\F_{q^t}$.
Then, in this setting, the Delsarte-dual code turns to be 
\begin{equation}
\cC^\perp= \{f  \in \tilde{\mathcal{L}}_{n,q,s}[X] \colon  \langle f, g \rangle = 1,  \forall g \in \cC\}
\end{equation}
where 
$$\langle f, g \rangle:= \chi \left (\mathrm{Tr}_{q^n/q}\left (\sum_{i=0}^{n-1}f_ig_i \right ) \right )$$
with $f=\sum_{i=0}^{n-1}f_iX^{q^{si}},$ $g=\sum_{i=1}^{n-1}g_iX^{q^{si}}$, and $\chi$ is a non-trivial character.

Finally, we can adapt the notion of equivalence as follows. Two  rank-metric codes $\mathcal{C}$ and $\mathcal{C}'$ are said to be {\it equivalent} if there exist two permutation polynomials $g_1,g_2 \in  \tilde{\mathcal{L}}_{n,q,s}[X]$, i.e. the maps $g_1(x)$ and $g_2(x)$ are bijective, $h \in \tilde{\mathcal{L}}_{n,q,s}[X]$ and  $\tau \in {\rm Aut}(\F_{q})$ such that either \[ \mathcal{C}'= \{ g_1 \circ f^{\tau} \circ g_2 + h   \, : \, f \in \mathcal{C} \}\]
or
\[\mathcal{C}'= \{ g_1 \circ f^{\tau} \circ g_2 + h   \, : \, f \in \mathcal{C}^\top \},\] 
where $f^{\tau}$ stands for a polynomial whose coefficients are the image under $\tau$ of those of $f$, for further details and references on this topic, see \cite{S2019}.

\subsection{Restricted rank-metric codes}
Let $\F_{q^\ell}^{n \times n}$ be the set of square matrices of order $n$  with entries over $\F_{q^\ell}$, $\ell \in \{1,2\}$. Also, denote by $\Alt_{n,q},$ and $\Sym_{n,q}$ the subspaces of $\F_q^{n \times n}$ made up of alternating and symmetric matrices, respectively and by $\Her_{n,q}$ the $\F_q$-subspace of Hermitian matrices in $\F_{q^2}^{n \times n}$.
In order to make the discussion as unified as possible, in the following we will write $\cX_{n,q}$ to denote any one of the spaces $\Alt_{n,q}, \Sym_{n,q},$ and $\Her_{n,q}$. Also, we will set $\ell=2$ when dealing with the Hermitian case, while it will be $\ell=1$ otherwise. 

We will be mainly concerned with $d$-\textit{codes} in $\cX_{n,q}$. These are rank-metric codes $\cC$ contained in  $\cX_{n,q}$ of minimum rank distance $d$.

Let $a \in \F^*_q$, $P \in \GL(n, q)$, $R \in \cX_{n,q}$ and $\tau \in \Aut(\F_{q^\ell})$, an \textit{isometry} of $(\cX_{n,q},d_\rk)$  is a map 
\begin{equation}
\Psi^{\circ,\sigma}_{a,P,R,\tau}: X \in \cX_{n,q} \longrightarrow aP(X^\circ)^\tau (P^\sigma)^t + R \in \cX_{n,q},
\end{equation}
where $X \longmapsto X^\circ$ is 
	\begin{itemize}
		\item [$a)$] the identity map, if $\cX_{n,q}=\Alt_{n,q}$ and $n \neq 4$, $\cX_{n,q}=\Sym_{n,q},\Her_{n,q}$; 
		\item [$b)$] either the identity map or the map 
		\begin{equation*}
		\begin{pmatrix}
		0 & x_{12} & x_{13} & x_{14} \\
		-x_{12} & 0 & x_{23} & x_{24} \\ 
		-x_{13} &- x_{23} & 0 & x_{34} \\
		-x_{14} & -x_{24} & -x_{34} & 0 \\
		\end{pmatrix}
		\longmapsto
		\begin{pmatrix}
		0 & x_{12} & x_{13} & x_{23} \\
		-x_{12} & 0 & x_{14} & x_{24} \\ 
		-x_{13} &- x_{14} & 0 & x_{34} \\
		-x_{23} & -x_{24} & -x_{34} & 0 \\
		\end{pmatrix},
		\end{equation*} if $n=4$ and $\cX_{n,q}=\Alt_{n,q}$,
	\end{itemize}
and $P \longmapsto P^\sigma$ is
\begin{itemize}
    \item [$i)$] the identity map, if $\cX_{n,q}=\Alt_{n,q},\Sym_{n,q}$; 
    \item [$ii)$] the involutory automorphism of $\F_{q^2}$ acting on any entries of $P$, if $\cX_{n,q}=\Her_{n,q}$.
\end{itemize}
The group of isometries of $\cX_{n,q}$ acts transitively on it, see \cite{Wan96}. Also, two codes $\cC,\cC' \subset \X_{n,q}$ are said to be \textit{equivalent} if there exists $\Psi^{\circ,\sigma}_{a,P,R,\tau}$ such that $\cC'=\Psi^{\circ,\sigma}_{a,P,R,\tau}(\cC)$.

\begin{definition}
Let $\chi$ be a non-trivial character of the additive group $(\F_{q},+)$. We define an inner product
on $\cX_{n,q}$ into the complex numbers by
\begin{equation*}\label{scalarproduct}
\langle A, B \rangle := \begin{cases}
    \displaystyle\chi\left(\sum_{1\leq i<j\leq n}a_{ij}b_{ij}\right) \textnormal{ if } \cX_{n,q}=\Alt_{n,q}\\
    \chi(\tr(AB^t)) \quad  \quad \quad\textnormal{ otherwise, }
\end{cases}
\end{equation*}
for all $A=(a_{ij})$ and $B=(b_{ij})$ belonging to $ \cX_{n,q}$.\\

\medskip
\noindent If $\cC$ be an additive code of $\cX_{n,q}$, the \textit{dual} of $\cC$ in $\cX_{n,q}$ is the additive code
\begin{equation*}\label{def:dual}
\cC^* = \left  \{B \in \cX_{n,q} \colon \langle A, B \rangle  = 1, \forall  A \in \cC \right \}.
\end{equation*}    
\end{definition}

In the following we will denote by $d^*$ the minimum distance of the dual code $\cC^*$ of a $d$-code $\cC$. It is straightforward to see that for each additive code $\cC$ of $\cX_{n,q}$, one has that 
\begin{equation}\label{size-relation}
|\cC||\cC^*|=|\cX_{n,q}|.
\end{equation}
Also, it is well known that the subspaces of matrices $\Alt_{n,q},$ $\Sym_{n,q} \subset  \F_{q}^{n \times n}$ and $\Her_{n,q} \subset \F_{q^2}^{n \times n}$ are isomorphic to certain subsets of $\tilde{\mathcal{L}}_{n,q^\ell,s}[X]$ where $\ell=1$ when dealing with the alternating and symmetric cases, and $\ell=2$ otherwise.

The following subsections briefly review how this representation may be achieved. 

\subsubsection{\it Alternating \texorpdfstring{$d$}{d}-codes} 
By suitably choosing an $\F_q$-basis of $\F_{q^n}$, the set of alternating matrices $\Alt_{n,q}$ can be identified with the following vector subspace of $\tilde{\mathcal{L}}_{n,q,s}[X]$: 
\begin{equation}\label{pol-alt}
\Alt_{n,q} =\left \{\sum_{i=1}^{n-1}c_iX^{q^{si}} \colon c_{n-i} = -c^{q^{s(n-i)}},\,\, i \in \{1, 2,\ldots, n-1\}\right \}.
\end{equation}

Clearly, $\dim \Alt_{n,q}=\binom{n}{2}$ and it is well known that the rank of each polynomial $f \in \Alt_{n,q}$ is necessarily even.

In this setting, two codes $\cC,\cC' \subset \Alt_{n,q}$ are equivalent if it happens that 
$$\cC'=\{ag \circ f^\tau \circ  (g^\top)+h \colon f \in \cC\},$$ for given $a\in \F^*_q$, $g$ a permutation polynomial in $\tilde{\mathcal{L}}_{n,q,s}[X]$, $\tau \in \Aut(\F_{q})$ and $h \in  \Alt_{n,q}$. If $\cC$ and $\cC'$ are equivalent additive codes, the polynomial $h$ can be chosen equal to the null one.

In \cite[Theorem $4$]{delsarte1975}, Delsarte and Goethals showed that if $\cC \subset \Alt_{n,q}$ is a $d$-code, then 
\begin{equation}\label{alt-bound}
        |\cC| \leq q^{\frac{n(n-1)}{2\left \lfloor \frac{n}{2} \right \rfloor } \left (\left \lfloor \frac{n}{2} \right \rfloor -\frac{d}{2}+1 \right )}.
\end{equation}
Also, in case of equality, the weight distribution of $\cC$ is uniquely determined.

Let $n$ be an odd integer; in \cite[Theorem 7]{delsarte1975} (see also \cite{longobardi2020automorphism}), the following family of maximum alternating $2e$-codes was presented:

\begin{equation}\label{eq:alternatingcode}
\mathcal{A}_{n,2e,s}=  \Biggl \{  \sum_{i=e}^{\frac{n-1}{2}} \biggl ( b_i X^{q^{si}}- (b_i X)^{q^{s(n-i)}} \biggr ) \, : \,\, b_{e},\ldots, b_{\frac{n-1}{2}} \in \F_{q^n}  \Biggr \}.
\end{equation}

In general, it is not an easy task to compute the weight distribution of an alternating code that does not achieve the bound stated in \eqref{alt-bound}. However, similar to results obtained for Hamming metric codes and unrestricted rank-metric codes (see, e.g. \cite[Theorem 4.5]{byravsiam}), partial information on the weight distribution of an alternating code using knowledge of the minimum distance of its dual code has recently been determined in \cite[Proposition 6.1 and Corollary 6.2]{friedlander2023macwilliams}. 

Next, by relying  on \eqref{alt-bound}, we can easily derive a bound on the minimum distance of the dual code $\C^*$ of an alternating code $\cC$. More precisely, we get the following result.

\begin{lemma}\label{bound-dual-dist}
Let $\cC \subset \Alt_{n,q}$ be an additive $d$-code. Then, the dual code $\cC^*$  is a  $d^*$-code with 
$$d^* \leq \min \left \{2\left \lfloor \frac{n}{2} \right  \rfloor,  2\left \lfloor \frac{n}{2} \right  \rfloor -d +4 \right \}.$$
In particular, if $\cC$ is an $\F_q$-linear code and $d=n$ even, then $d^*=2$.
\end{lemma}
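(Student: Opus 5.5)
We combine the Delsarte–Goethals bound \eqref{alt-bound}, applied to both $\cC$ and $\cC^*$, with the size relation \eqref{size-relation}. Write $m=\lfloor n/2\rfloor$ and $N=\binom{n}{2}=\dim\Alt_{n,q}$, so \eqref{alt-bound} reads $|\cD|\le q^{\frac{N}{m}(m-\delta/2+1)}$ for any $\delta$-code $\cD\subseteq\Alt_{n,q}$.

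The first term of the minimum is immediate. Every alternating matrix has even rank at most $n$, so its rank is at most $2m$, and hence $d^*\le 2m$ whenever $\cC^*\neq\{0\}$. The degenerate case $\cC=\Alt_{n,q}$, where $\cC^*$ is trivial, is excluded implicitly because $d^*$ must be defined.

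For the second term we apply \eqref{alt-bound} to $\cC$ and to $\cC^*$:
$$|\cC|\le q^{\frac{N}{m}(m-\frac d2+1)},\qquad |\cC^*|\le q^{\frac{N}{m}(m-\frac{d^*}2+1)}.$$
Multiplying these and using $|\cC||\cC^*|=q^N$ gives
$$N\le \frac{N}{m}\Bigl(2m-\frac{d+d^*}{2}+2\Bigr).$$
This is equivalent to $m\le 2m-\frac{d+d^*}{2}+2$, that is, $d^*\le 2m-d+4$, which is the claimed bound.

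For the particular case, suppose $\cC$ is $\F_q$-linear and $d=n$ is even, so $m=n/2$ and $d=2m$. The bound then gives $d^*\le 4$, which falls short of the claim, so this step is the main obstacle and needs a sharper argument. Since $d$ is the full rank $n$, every nonzero element of $\cC$ is invertible. Hence $\cC$ meets the subspace $\Alt_{n,q}(U)$ of alternating matrices supported on a hyperplane $U$ only in $\{0\}$, because such matrices are singular. We then use linearity, so that $\cC^*$ is the orthogonal complement with respect to a nondegenerate bilinear form, together with a dimension count. From \eqref{alt-bound} with $d=2m$ we get $\dim\cC\le n-1$.

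The natural rank-2 targets in $\cC^*$ are matrices of the form $uv^t-vu^t$. To show $\cC^*$ contains one, it suffices to find a 2-dimensional subspace $\langle u,v\rangle$ such that $\langle A,uv^t-vu^t\rangle$, which equals $u^tAv$ up to sign, vanishes for every $A\in\cC$. The set of pairs $(u,v)$ is parametrised by the Grassmannian of planes in $\F_q^n$. The conditions $u^tA_iv=0$, for $A_1,\dots,A_k$ a basis of $\cC$ with $k\le n-1$, impose at most $n-1$ linear conditions on $v$ once $u$ is fixed. Since $u^tA_iu=0$ automatically and $v$ ranges over $\F_q^n$, the solution space in $v$ has dimension at least $n-(n-1)=1$, and it already contains $u$. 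So dimension counting alone is not enough.

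The refinement I would try is this. Because each $A_i$ is invertible and alternating, the vectors $A_iu$ are nonzero, and the vectors $A_1u,\dots,A_ku$ together with the requirement $v\in u^{\perp_{A_i}}$ give a system of rank at most $k$. Its solutions contain $u$, so the solution space has dimension at least $n-k\ge1$, with equality forcing $k=n-1$. In the case $k=n-1$, I would instead count incidences over all pairs: if no rank-2 element existed, then for every $u$ the vectors $A_1u,\dots,A_{n-1}u$ would span $u^{\perp}$ exactly. The aim is to derive a contradiction from the resulting fibration structure, for instance by showing it would make $\cC$ a spread-like set incompatible with $n$ even. Alternatively, one could fall back on the weight enumerator of maximum alternating codes being determined, via the MacWilliams identities, to conclude $A_2(\cC^*)>0$.
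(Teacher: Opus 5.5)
Your proof of the two inequalities $d^*\le 2\lfloor n/2\rfloor$ and $d^*\le 2\lfloor n/2\rfloor-d+4$ is correct and is the paper's argument. It combines the Delsarte--Goethals bound \eqref{alt-bound}, applied to both $\cC$ and $\cC^*$, with $|\cC||\cC^*|=|\Alt_{n,q}|$.

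The ``in particular'' clause is not proved. Your rank-$2$ construction is correct as far as it goes. For $\F_q$-linear $\cC$ with basis $A_1,\dots,A_k$, one has $uv^t-vu^t\in\cC^*$ iff $u^tA_iv=0$ for all $i$. Since $u$ itself is always a solution, some $v\notin\langle u\rangle$ exists whenever $k\le n-2$. So everything hinges on excluding $k=n-1$, and the only bound you have, $\dim\cC\le n-1$ from \eqref{alt-bound}, leaves exactly that case open.

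Searching harder for rank-$2$ elements cannot close this case. Suppose $d=n$ and $k=n-1$. For every $u\neq 0$, the map $A\mapsto Au$ sends $\cC$ injectively into $u^{\perp}$, because nonzero elements of $\cC$ are invertible and $u^tAu=0$. By dimension it is onto $u^{\perp}$, so the only solutions $v$ are multiples of $u$, and $\cC^*$ contains no rank-$2$ matrix at all. The ``fibration'' condition you hope to contradict therefore holds automatically. A weight-enumerator computation cannot give $A_2(\cC^*)>0$ either: $\dim\cC^*$ would equal $(n-1)(n/2-1)$, which is exactly the Delsarte--Goethals bound for minimum distance $4$. What is needed is a proof that such a code does not exist.

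The missing ingredient is the sharper bound $\dim_{\F_q}\cC\le n/2$ for an $\F_q$-linear subspace of $\Alt_{n,q}$ whose nonzero elements are all invertible. The paper takes this from Lemma 3 of Gow and Quinlan. It also follows at once from Chevalley--Warning: the Pfaffian restricted to $\cC$ is a homogeneous form of degree $n/2$ in $\dim\cC$ variables, so it has a nontrivial zero once $\dim\cC>n/2$.

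The paper then reruns the counting with $m=n/2$. One gets $\dim\cC^*\ge\binom n2-m=2m(m-1)$, which exceeds the Delsarte--Goethals bound $(2m-1)(m-1)$ for $d^*=4$ as soon as $m\ge 2$. This forces $d^*=2$.

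With this bound, your own construction also finishes the job, since $n/2\le n-2$ for $n\ge 4$. For $n=2$ one has $\cC=\Alt_{2,q}$ and $\cC^*=\{0\}$, so the case is degenerate.
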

\begin{proof}
Let us set $d=2e$ and $d^*=2e^*$. By \eqref{size-relation}, we have that $\log_q\vert \cC \vert  + \log_q \vert \cC^* \vert  = \binom{n}{2} $. Moreover, by \eqref{alt-bound}, we get
\begin{equation}\label{alt-fist-ineq}
\begin{split}
 \log_q \vert  \cC^* \vert  & \geq \frac{n(n-1)}{2}-\frac{n(n-1)}{2 \left \lfloor \frac{n}{2} \right \rfloor } \left (\left \lfloor \frac{n}{2} \right \rfloor-e+1 \right ) = \frac{n(n-1)}{2 \left \lfloor \frac{n}{2} \right \rfloor }(e-1).
\end{split}
  \end{equation}

Then, again by taking into account the upper bound \eqref{alt-bound}, 
\begin{equation}\label{alt-sec-ineq}
 \log_q  \vert \cC ^*  \vert \leq \frac{n(n-1)}{2 \left \lfloor \frac{n}{2} \right \rfloor } \left (\left \lfloor \frac{n}{2} \right \rfloor-e^*+1 \right ) .
\end{equation}
Putting together \eqref{alt-fist-ineq} and \eqref{alt-sec-ineq}, we get the result. Moreover, by \cite[Lemma 3]{quinlan_gow}, if $d=n$, an $\F_q$-linear rank-metric code $\cC \subset \Alt_{n,q}$ has dimension at most $n/2$. Hence, arguing as above, we get $e^*=1$. Hence the result follows.
\end{proof}

 By \cite[Theorem 5]{delsarte1975}, an  additive $d$-code with minimum distance $2 < d=2e<n$ is maximum if and only if its dual code is maximum as well, with minimum distance $d^*= 2 \lfloor \frac{n}{2} \rfloor -d +4$.

\subsubsection{\it Symmetric \texorpdfstring{$d$}{d}-codes}
 
 The set of symmetric matrices with entries over $\F_q$ can be seen as the $\binom{n+1}{2}$-dimensional subspace of $\tilde{\mathcal{L}}_{n,q,s}[X]$
\begin{equation}\label{pol-sym}
\Sym_{n,q}= \left \{\sum_{i=0}^{n-1}c_iX^{q^{si}} \colon c_{n-i}=c^{q^{s(n-i)}},\,\,i \in \{0, 1,\ldots,n-1\}\right\}.
\end{equation}

Also in this case, two codes $\cC,\cC' \subset \Sym_{n,q}$ are equivalent if for a given $a \in \F^*_q$, $g$ permutation polynomial belonging to $\tilde{\mathcal{L}}_{n,q,s}[X]$, $\tau \in \Aut(\F_{q})$ and $h \in  \cX_{n,q}$, it happens that
$\cC'=\{ag \circ f^\tau \circ  (g^\top)+h \colon f \in \cC\}.$

Again,  if $\cC$ and $\cC'$ are equivalent additive codes, then the polynomial $h$ can be chosen to be equal to the null one.

In \cite[Lemma 3.5]{schmidt_symmetric_2015} it is proven that if $\cC \subset \Sym_{n,q}$ is a $d$-code with minimum distance $d=2\delta -1$, then 
\begin{equation} \label{eq:sym_bound_d_odd}
\log_q | \cC |
\leq
\begin{cases}
    n\left( \frac{n+1}{2}-\delta+1 \right )  & \textnormal{for $n$ odd}\\
    (n+1)\left ( \frac{n}{2}-\delta+1 \right )  & \textnormal{for $n$ even}. 
\end{cases}
\end{equation}

Moreover, equality occurs if and only if the dual code $\cC^*$ of $\cC$ has minimum distance $d^* \geq 2t+3$, where $t = \lfloor (n+1)/2\rfloor-\delta$. 

On the other hand, in \cite[Lemma 3.6]{schmidt_symmetric_2015} it is shown that if otherwise 
$d=2\delta$ and $\cC \subset \Sym_{n,q}$ is an additive $d$-code, then 
\begin{equation} \label{eq:sym_bound_d_even}
\log_q |\cC| 
\leq
\begin{cases}
(n+1)\left( \frac{n-1}{2}-\delta+1 \right )  & \textnormal{for $n$ odd}\\
    n\left ( \frac{n}{2}-\delta+1 \right )  & \textnormal{for $n$ even}. 
\end{cases}
\end{equation}

By using these upper bounds, we may prove a bound on the minimum distance of the dual code of a symmetric code.

\begin{lemma} \label{dual-d-odd-sym}
    Let $\cC \subset \Sym_{n,q}$ be an additive $d$-code with $d=2\delta-1$, for some positive integer $\delta > 1$. Then, $d^* \leq n -d +3.$\\
    In particular if $\cC$ is maximal, $\cC^*$ is also maximal with minimum distance
    \begin{equation*}
d^*=
\begin{cases}
    n-d+3 & \textnormal{if $n$ odd},\\
    n-d+2 &  \textnormal{if $n$ even}.
\end{cases}
    \end{equation*}
\end{lemma}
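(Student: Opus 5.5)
We follow the argument of Lemma \ref{bound-dual-dist}, now using the two symmetric bounds \eqref{eq:sym_bound_d_odd} and \eqref{eq:sym_bound_d_even}. By \eqref{size-relation}, $\log_q|\cC|+\log_q|\cC^*|=\binom{n+1}{2}$. The odd-distance bound \eqref{eq:sym_bound_d_odd} for $\cC$ then gives a lower bound for $\log_q|\cC^*|$, and an upper bound for $\log_q|\cC^*|$ in terms of $d^*$ comes from \eqref{eq:sym_bound_d_odd} or \eqref{eq:sym_bound_d_even}, depending on the parity of $d^*$. Note that $\cC^*$ is additive, so the even-distance bound also applies to it.

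\emph{Lower bound.} For $n$ odd, $\binom{n+1}{2}-n\left(\frac{n+1}{2}-\delta+1\right)=n(\delta-1)$. For $n$ even, $\binom{n+1}{2}-(n+1)\left(\frac{n}{2}-\delta+1\right)=(n+1)(\delta-1)$.

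\emph{Upper bound.} Write $d^*=2\delta^*-1$ or $d^*=2\delta^*$. In each of the four parity combinations of $n$ and $d^*$, the upper bound has the shape $c\,(k-\delta^*+1)$ with $c\in\{n,n+1\}$ and $k\in\{\frac{n\pm1}{2},\frac n2\}$.

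Comparing the lower and upper bounds in each case gives an inequality on $\delta^*$.
\begin{itemize}
\item $n$ odd, $d^*$ odd: $n(\delta-1)\le n\left(\frac{n+1}{2}-\delta^*+1\right)$ gives $\delta^*\le \frac{n+3}{2}-\delta$, hence $d^*\le n+2-2\delta=n-d+1$.
\item $n$ odd, $d^*$ even: $n(\delta-1)\le (n+1)\left(\frac{n+1}{2}-\delta^*\right)$ gives $\delta^*\le\frac{n+1}{2}-\frac{n(\delta-1)}{n+1}$. Taking the integer part, this is at most about $\frac{n+3}{2}-\delta$, so $d^*\le n-d+3$.
\item $n$ even: the analogous computations give $d^*\le n-d+2$ or $d^*\le n-d+3$.
\end{itemize}
So in all cases $d^*\le n-d+3$. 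The main obstacle is the careful rounding in the mixed-parity cases, where the constants $n$ and $n+1$ differ. There we use $\delta\ge2$, that $\delta^*$ is an integer, and $\delta\le \frac{n+1}{2}$ (so that $\frac{n(\delta-1)}{n+1}>\delta-2$) to get a clean integer bound.

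For the second claim we use the equality characterization after \eqref{eq:sym_bound_d_odd}: if $\cC$ is maximal, then $d^*\ge 2t+3$ with $t=\lfloor (n+1)/2\rfloor-\delta$.
\begin{itemize}
\item For $n$ odd, $2t+3=n+1-2\delta+3=n-d+3$. Together with $d^*\le n-d+3$ this forces $d^*=n-d+3$.
\item For $n$ even, $2t+3=n-2\delta+3=n-d+2$. This value is odd, so the upper bound for $\cC^*$ is governed by \eqref{eq:sym_bound_d_odd}. Checking $\log_q|\cC^*|=(n+1)(\delta-1)$ against that bound with $\delta^*=\frac{n-d+3}{2}$ shows that equality holds there.
\end{itemize}
In both parities the lower bound on $\log_q|\cC^*|$ coming from maximality of $\cC$ meets the upper bound \eqref{eq:sym_bound_d_odd} at the value of $d^*$ found, so $\cC^*$ attains the bound and is maximal.
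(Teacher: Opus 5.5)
Your overall route is the same as the paper's: combine \eqref{size-relation} with Schmidt's bounds to get an upper bound on $d^*$, then use the equality characterization after \eqref{eq:sym_bound_d_odd} in the maximal case. Splitting on the parity of $d^*$ and applying \eqref{eq:sym_bound_d_even} to the additive code $\cC^*$ is actually more careful than the paper's write-up, which for $n$ odd only plugs $\delta^*=(d^*+1)/2$ into \eqref{eq:sym_bound_d_odd}.

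However, your first case has an arithmetic slip. From $n(\delta-1)\le n\left(\frac{n+1}{2}-\delta^*+1\right)$ you get $\delta-1\le\frac{n+3}{2}-\delta^*$, i.e. $\delta^*\le\frac{n+5}{2}-\delta$. Hence $d^*\le n+4-2\delta=n-d+3$, not $n-d+1$. Your stated bound is false: it would contradict the second half of the statement itself, where for $n$ odd and $\cC$ maximal the value $d^*=n-d+3$ is odd. This case ($n$ odd, $d^*$ odd) is exactly where $n-d+3$ is attained. Your final inequality survives only because the corrected computation gives precisely this value.

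The second problem is the case $n$ even. You leave the bound as ``$d^*\le n-d+2$ or $d^*\le n-d+3$''. The correct computations give $d^*\le n-d+2$ for odd $d^*$. For even $d^*$, the inequality $(n+1)(\delta-1)\le n\left(\frac n2-\delta^*+1\right)$ gives $d^*\le n-d+1$. With your weaker version the maximal case is not closed: $d^*\ge 2t+3=n-d+2$ still allows the even value $d^*=n-d+3$. The remark that $2t+3$ is odd does not rule it out, since the parity of this lower bound says nothing about the parity of $d^*$.

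The missing step, which the paper makes explicit, is this. If $d^*=n-d+3$, then $\delta^*=\frac n2-\delta+2$, and \eqref{eq:sym_bound_d_even} gives $\log_q|\cC^*|\le n(\delta-1)<(n+1)(\delta-1)=\log_q|\cC^*|$, a contradiction. Once these two points are fixed, your check that $\cC^*$ meets \eqref{eq:sym_bound_d_odd} with equality, and so is maximal, goes through.
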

\begin{proof}
Firstly, let us suppose $n$ is odd. By applying \eqref{eq:sym_bound_d_odd} and since $\log_q \vert \cC \vert  + \log_q \vert \cC^* \vert  = \binom{n+1}{2}$, we get
\begin{equation}\label{eq:lower}
    \log_q\vert \cC ^* \vert \geq \binom{n+1}{2}-n \left (\frac{n+1}2-\delta+1 \right )=n(\delta-1).
\end{equation} 
Taking again into account \eqref{eq:sym_bound_d_odd}, from \eqref{eq:lower} we may write
\begin{equation}
      n(\delta-1)\leq \log_q \vert \cC^* \vert  \leq n \left (\frac{n+1}2-\frac{d^*+1}2+1 \right ),
\end{equation}
which leads us to the assertion. 
Now, assume that $n$ is even. By the same argument, we get
 $$(n+1)(\delta-1) \leq \log_q \vert  \cC^* \vert  \leq n\left ( \frac{n}{2}-\frac{d^*}{2}+1 \right )   $$
 and hence $d^* \leq n-d+3$. In the case of a maximum rank distance code $\cC$, providing $n$ is odd by \cite[Lemma 3.5] {schmidt_symmetric_2015}, it is straightforward to see that $d^* \geq n-d+3$ and the equality follows.\\
 On the other hand, when $n$ is even, one has $n-d+2 \leq d^*\leq n-d+3$. However, if $d^* = n-d+3$ the fact that $\C$ is MRD implies that 
 $$\log_q \vert \cC^* \vert =(n+1) \left (\frac{d+1}{2}-1 \right ) > n \left (\frac{n}{2}-\frac{d^*}{2}+1 \right ),$$ which in turn leads to a contradiction. Hence, also in this case, the result follows.
\end{proof}


By using a similar argument, we get at the following result.

\begin{lemma} \label{dual-sym-even}
Let $\cC \subset \Sym_{n,q}$ be an additive $d$-code, with  $d=2 \delta$ and $\delta \geq 1$.   Then $d^* \leq n -d +2$. 
\end{lemma}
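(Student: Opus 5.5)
The plan is to argue exactly as in Lemma~\ref{bound-dual-dist} and Lemma~\ref{dual-d-odd-sym}. Since $d=2\delta$, I would bound $|\cC|$ from above with \eqref{eq:sym_bound_d_even}. Combined with the size relation $\log_q|\cC|+\log_q|\cC^*|=\binom{n+1}{2}$, this gives a lower bound on $\log_q|\cC^*|$. I would then bound $\log_q|\cC^*|$ from above, using \eqref{eq:sym_bound_d_odd} or \eqref{eq:sym_bound_d_even} according to the parity of $d^*$; the latter applies because $\cC^*$ is additive. Comparing the two bounds should give $d^*\le n-d+2$. I split into four cases by the parities of $n$ and $d^*$.

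\emph{Lower bound on $|\cC^*|$.} For $n$ odd, \eqref{eq:sym_bound_d_even} gives $\log_q|\cC|\le (n+1)\big(\frac{n+1}{2}-\delta\big)$, hence $\log_q|\cC^*|\ge (n+1)\big(\delta-\frac12\big)$. For $n$ even, it gives $\log_q|\cC|\le n\big(\frac n2-\delta+1\big)$, hence $\log_q|\cC^*|\ge n\big(\delta-\frac12\big)$.

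\emph{Case $d^*=2\delta^*$ even.} Apply \eqref{eq:sym_bound_d_even} to $\cC^*$.
\begin{itemize}
\item For $n$ odd, $(n+1)\big(\delta-\frac12\big)\le (n+1)\big(\frac{n+1}{2}-\delta^*\big)$. This gives $\delta^*\le \frac{n+2}{2}-\delta$, that is, $d^*\le n+2-d$.
\item For $n$ even, $n\big(\delta-\frac12\big)\le n\big(\frac n2-\delta^*+1\big)$. This gives $\delta^*\le \frac{n+3}{2}-\delta$. Since $\delta^*$ is an integer and $n$ is even, $\delta^*\le\frac n2+1-\delta$, again $d^*\le n+2-d$.
\end{itemize}

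\emph{Case $d^*=2\delta^*-1$ odd.} Apply \eqref{eq:sym_bound_d_odd} to $\cC^*$; here the coefficients $n$ and $n+1$ no longer cancel, so this is the main step. The target in both parities is $\delta^*\le \frac{n+3}{2}-\delta$.
\begin{itemize}
\item For $n$ odd, $(n+1)\big(\delta-\frac12\big)\le n\big(\frac{n+3}{2}-\delta^*\big)$. This yields
\[
\delta^*\le \frac{n+3}{2}-\frac{(n+1)(2\delta-1)}{2n}<\frac{n+4}{2}-\delta ,
\]
with strict inequality because $2\delta-1>0$. Since $\frac{n+4}{2}-\delta$ is a half-integer and $\delta^*$ is an integer, $\delta^*\le\frac{n+3}{2}-\delta$, so $d^*\le n+2-d$.
\item For $n$ even, $n\big(\delta-\frac12\big)\le (n+1)\big(\frac n2-\delta^*+1\big)$. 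This yields $\delta^*\le \frac n2+1-\frac{n(2\delta-1)}{2(n+1)}$. One checks that $\frac{n(2\delta-1)}{2(n+1)}>\delta-1$ is equivalent to $2\delta<n+2$, which holds because $d=2\delta\le n$. Hence $\delta^*<\frac n2+2-\delta$, so as an integer $\delta^*\le \frac n2+1-\delta$. Therefore $d^*=2\delta^*-1\le n+1-d\le n+2-d$.
\end{itemize}

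The expected obstacle is exactly this odd-$d^*$ case. The bounds for codes of odd and even minimum distance have different leading coefficients, so one must track the fractional slack carefully and use the integrality of $\delta^*$ together with $d\le n$ to recover the sharp bound.
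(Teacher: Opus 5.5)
Your proof is correct and is the same counting argument the paper intends: combine \eqref{size-relation} with \eqref{eq:sym_bound_d_even} for $\cC$, then compare with \eqref{eq:sym_bound_d_odd} or \eqref{eq:sym_bound_d_even} for $\cC^*$, as in Lemma~\ref{dual-d-odd-sym}. Your explicit split on the parity of $d^*$, with the integrality arguments in each case, is more careful than the paper's sketch of the analogous Lemma~\ref{dual-d-odd-sym}, which applies only one of the two bounds to $\cC^*$ without justifying that choice.
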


In \cite{schmidt_symmetric_2015}, Kai-Uwe Schmidt presented the following class of $\F_q$-linear symmetric codes for any integer $1 \leq d \,\, \leq \,\, n$ such that $n-d$ is even:
\begin{equation} \label{Schimdtcode}
\mathcal{S}_{n,d,s} = \Biggl \{ b_0 X+  \sum_{i=1}^{\frac{n-d}{2}}   \Bigl ( b_i X^{q^{si}}+(b_iX)^{q^{s(n-i)}}  \Bigr ) \, : \,\, b_0, b_1, \ldots, b_{\frac{n-d}{2}} \in \F_{q^n}  \Biggr \}
\end{equation}
where $1 \leq s \leq n$ and $\gcd(s,n)=1$. The set $\mathcal{S}_{n,d,s}$ turns out to be a  maximum rank-metric code  with minimum distance $d$, \cite[Theorem 4.4]{schmidt_symmetric_2015}. 

Moreover, in \cite{longobardi2020automorphism}, when $q$ is odd, $n=2k$ and $\gcd(s,n)=\gcd(k,n)=1$, the following set of $q$-polynomials was presented:

\begin{align*}
\mathcal{S}_{2k,s}(\eta)= \bigg \{ & a_0X + \sum_{j=1}^{k-2} \left (a_jX^{q^{sj}} + (a_{j}X)^{q^{s(2k-j)}} \right )+ \eta bX^{q^{s(k-1)}} +  aX^{q^{sk}} \\
&+\eta^{q^{s(k+1)}}b^{q^s}X^{q^{s(k+1)}}  \,:\, a_0,a_1,...,a_{k-2} \in \F_{q^n} \text{ and } a,b \in \F_{q^k} \bigg\},
\end{align*}
where $\eta \in\F_{q^n}$ such that $\eta^{\frac{q^{n-1}-1}{q-1}}$ is not a square, which gives rise to a maximum $2$-code in $\Sym_{n,q}$. In \cite[Theorem 5.1]{longobardi2020automorphism}, it was proven that $\mathcal{S}$ is not equivalent to $\mathcal{S}_{2k,2,s}$.

Recently in \cite{tangzhou}, the following  examples have been exhibited:

\begin{equation}\label{code-tangzhou}
\begin{aligned}
\mathcal{T}_{2k,s}(\eta) = \bigg \{ & b_0X^{q^{sk}}+b_1X^{q^{s(k-1)}}+(b_1X)^{q^{s(k+1)}}+\eta b_2X^{q^{s(k-2)}} + (\eta b_2X)^{q^{s(k+2)}} \,:\\
&\, b_1 \in \F_{q^n} \text{ and } b_0,b_2 \in \F_{q^k} \bigg\},
\end{aligned}
\end{equation}
where $\eta \in\F_{q^n}$ is a non-square in $\F_{q^n}$. In Section $3$ of \cite{tangzhou}, the authors proved that $\mathcal{T}_{2k,s}(\eta)$ turns to be a maximum $(n-2)$-codes for $k \in \{3,4,5\}$. 

\subsubsection{\it Hermitian \texorpdfstring{$d$}{d}-codes} 

Regarding upper bounds for an additive $d$-code $\cC$ in $\Her_{n,q}$, in \cite[Theorem 1]{schmidt_hermitian_2018}, K.-U. Schmidt proved that if it is additive, then $$\log_q |\cC| \leq n(n-d+1).$$
When $d$ is odd, this upper bound also holds for non-additive $d$-codes, whereas if $\cC$ is additive, the equality is attained if and only if the dual code $\cC^*$ has minimum distance $d^* \geq n - d + 2$.

In \cite[Theorem 6.1]{mschimdt} the following easy example of a maximum Hermitian 2-code was exhibited: 
\begin{equation}\label{miriam-code}
\mathcal{R}=\left \{(a_{ij}) \in \Her_{n,q}  \colon a_{ii}=0 \right \}.
\end{equation}

However, Hermitian matrices with entries over $\F_{q^2}$ can also be be identified with the $q^{2s}$-polynomials with coefficients over $\F_{q^{2n}}$ and such that $\gcd(2n,s)=1$. Indeed, we have that 
\begin{equation}\label{pol-her}
\Her_{n,q}=
\left \{ \sum_{i=0}^{n-1}
c_iX^{q^{2si}} \colon c_{n-i+1} = c_i^{q^{s(2n-2i+1)}},\,\, i \in \{0, 1, 2,\ldots, n-1\} \right \} .
\end{equation}
Note that if $f=\sum_{i=1}^{n-1} c_i x^{q^{2si}}$ belongs to $\Her_{n,q}$ with $n$ odd, then $c_{\frac{n+1}{2}} \in \F_{q^n}$. Clearly, here all the indices $c_i$'s in \eqref{pol-alt},\eqref{pol-sym} and $\eqref{pol-her}$ are taken modulo $n$. 

In this case, two codes $\cC,\cC' \subset \Her_{n,q}$ are equivalent if for a given $a \in \F^*_q$, permutation polynomial $g \in \tilde{\mathcal{L}}_{n,q^2}[X]$, $\tau \in \Aut(\F_{q^{2}})$ and $h \in  \Her_{n,q}$, one has
$\cC'=\{ag \circ f^\tau \circ  (g^\top)^{q^{2n-1}}+h \colon f \in \cC\}.$

By the same argument used in Lemmas \ref{bound-dual-dist} and \ref{dual-sym-even}, we obtain the next result. We leave the details to the reader.

\begin{lemma} \label{dual-d-odd-her}
    Let $\cC \subset \Her_{n,q}$ be an additive $d$-code, $d \geq 2$. Then $d^* \leq n -d +2$. In particular if $d$ is odd and $\cC$ is maximum, then, $d^*=n-d+2$. 
\end{lemma}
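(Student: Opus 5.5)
The plan is to copy the counting argument of Lemmas \ref{bound-dual-dist} and \ref{dual-d-odd-sym}, now with Schmidt's Hermitian bound $\log_q|\cC|\le n(n-d+1)$ for additive $d$-codes in $\Her_{n,q}$ \cite[Theorem 1]{schmidt_hermitian_2018}, together with the size relation \eqref{size-relation}. Since $\dim_{\F_q}\Her_{n,q}=n^2$, the size relation reads $\log_q|\cC|+\log_q|\cC^*|=n^2$.

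First I apply Schmidt's bound to $\cC$, which is an additive $d$-code. This gives
\begin{equation*}
\log_q|\cC^*| \;\ge\; n^2-n(n-d+1) \;=\; n(d-1).
\end{equation*}
Because $d\ge 2$, the right-hand side is positive. So $\cC^*$ is nonzero and has a well-defined minimum distance $d^*$.

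Next I apply the same bound to $\cC^*$, which is additive and has minimum distance $d^*$. This gives $\log_q|\cC^*|\le n(n-d^*+1)$. Combining the two inequalities yields $n(d-1)\le n(n-d^*+1)$, that is, $d^*\le n-d+2$. This proves the first claim.

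For the second claim, let $d$ be odd and $\cC$ maximum, so $\log_q|\cC|=n(n-d+1)$. By the equality characterisation quoted from \cite{schmidt_hermitian_2018}, equality holds for an additive $d$-code exactly when $d^*\ge n-d+2$. Together with the upper bound just proved, this gives $d^*=n-d+2$.

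The only delicate point is making sure the equality characterisation really applies. I will use it in the direction ``$\cC$ attains the bound $\Rightarrow$ $d^*\ge n-d+2$'', which requires $\cC$ additive; this is part of the hypothesis. If that direction were unavailable, I would argue directly from the MacWilliams-type identities for $\Her_{n,q}$ in \cite{schmidt_hermitian_2018}, but citing the stated characterisation should suffice.
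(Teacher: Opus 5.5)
Your proposal is correct and is the argument the paper intends: the paper leaves this lemma to the reader as the counting argument of Lemmas \ref{bound-dual-dist} and \ref{dual-sym-even}, i.e.\ Schmidt's bound applied to $\cC$ and to $\cC^*$ together with $|\cC||\cC^*|=q^{n^2}$, and it handles the equality case through Schmidt's characterization of codes meeting the bound, as in Lemma \ref{dual-d-odd-sym}. One correction to your final paragraph: the implication ``$\cC$ meets the bound $\Rightarrow d^*\ge n-d+2$'' needs $d$ odd, not only additivity (the additive $2$-code $\mathcal{R}$ of \eqref{miriam-code} meets the bound, yet $\mathcal{R}^*$ is the space of diagonal matrices in $\Her_{n,q}$, so $d^*=1<n$); since you invoke it only for odd $d$, your proof is unaffected.
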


In \cite[Section 4]{schmidt_hermitian_2018}, the following two classes of $\F_q$-linear codes in $\Her_{n,q}$, were presented.

Suppose that $n$ and $d$ are integers with opposite parity such that $1 \leq d \leq n-1$ and $s$ an integer such that $\gcd(2n,s)=1$. Then, the set
\begin{equation}\label{eq:hermitiancodeoppositeparity}
	\mathcal{H}_{n,d,s}= \biggl \{\sum_{j=1}^{\frac{n-d+1}{2}} \biggl ( (b_jX)^{q^{s(2n-2j+2)}}+b^{q^{s}}_jX^{q^{2sj}}  \biggr ) : \,\, b_1, b_2, \ldots, b_{\frac{n-d+1}{2}} \in \F_{q^{2n}} \biggr \},
\end{equation} 
is  a maximum $\F_q$-linear Hermitian $d$-code \cite[Theorem 4]{schmidt_hermitian_2018}.

Also, providing $n$ and $d$ are both odd integers such that $1 \leq d \leq n$ and $s$ as above. Then, the set 
\begin{equation}\label{eq:hermitiancodeodd}
\begin{aligned}
	\mathcal{E}_{n,d,s}=\biggl \{ & (b_0\,X)^{q^{s(n+1)}}+ \sum_{j=1}^{(n-d)/2} \biggl ((b_jX)^{q^{s(n+2j+1)}}+b^{q^{s}}_jX^{q^{s(n-2j+1)}}  \biggr ) :\\
    &\,\, b_0 \in \F_{q^n}  \,\,\text{and} \,\, b_1, \ldots, b_{(n-d)/2} \in \F_{q^{2n}}\biggr \}
\end{aligned}
\end{equation}
is a maximum $\F_q$-linear Hermitian $d$-code \cite[Theorem 5]{schmidt_hermitian_2018}.

Finally, in \cite{trombetti2021maximum}, when $q$ and $n$ are odd and $s$ an integer such that the $\gcd(2n,s)=1$, the set of $q$-polynomials   
\begin{equation*}
\begin{aligned}
\mathcal{H} = \bigg \{ & \sum_{i=1}^{\frac{n-3}{2}} \bigg (c_iX^{q^{2si}} + (c_iX)^{q^{2s(n-2i+1)}}\bigg ) +bX^{q^{2s}\frac{n+1}{2}} \\
& + a\gamma X^{q^{2s}\frac{n-1}{2}} + (a\gamma)^{q^{s(n+2)}}X^{q^{2s}\frac{n+3}{2}}
\,:\, c_i \in \F_{q^{2n}} \text{ and } a,b \in \F_{q^n} \bigg\},
\end{aligned}
 \end{equation*}
where $\gamma \in\F_{q^n}$ such that $\gamma^{\frac{q^{2n-1}-1}{q-1}}$ is not a square in $\F_q,$  was presented,  which gives rise to a maximum $2$-code in $\Her_{n,q}$ not equivalent to $\mathcal{H}_{n,d,s}$ and $\mathcal{E}_{n,d,s}$ \cite[Theorem 6.6]{trombetti2021maximum}.

\medskip
\noindent In what follows,  we will study in depth the $q$-polymatroid associated with a given $d$-code $\cC \subset \mathcal{X}_{n,q}$.

\subsection{\texorpdfstring{$q$}{q}-Polymatroids}
\label{q-polymatroid}
Let $E$ be an $\F_q$-vector space and let us denote by $(\mathscr{L}(E), \leq, \vee, \wedge)$  the \textit{subspace lattice} of $E$, i.e. the lattice of $\mathbb{F}_q$-subspaces of $E$, ordered with respect to inclusion. The join $\vee$ of a pair of elements of $\mathscr{L}(E)$ is their sum and the meet $\wedge$ of a pair of subspaces is their intersection.
The minimal element of $\mathscr{L}(E)$ is the zero vector space $\{\zero\}$ and its maximal element is $E$. 
For each $U \in \mathscr{L}(E)$, we write $U^\perp$ to denote the orthogonal complement of $U$ with respect to a fixed non-degenerate bilinear form of $E$. We recall now the definition and some basic properties of $q$-polymatroids, see also \cite{GLJ}, \cite{gorla2019rank} and \cite{shiromoto19}.

\begin{definition}
A \textit{$(q,r)$-(integer) polymatroid} is a pair $\M=(\mathscr{L}(E), \rho)$ for which $r \in \Q_{>0}$ and $\rho$ is a function $\rho: \mathscr{L}(E) \longrightarrow {\mathbb N}_0$ satisfying the following axioms: 
\begin{itemize}
	\item[(R1)] $0\leq \rho(U) \leq r \cdot \dim U$, for all $U \in \mathscr{L}(E)$.
	\item[(R2)] $U\leq V \Rightarrow \rho(U)\leq \rho(V)$,  for all $U,V \in \mathscr{L}(E)$.
	\item[(R3)] $\rho(U \vee V)+\rho(U\wedge V)\leq \rho(U) + \rho(V)$, for all $U,V  \in \mathscr{L}(E)$.
\end{itemize}
\end{definition}

Let $\mathcal{M}_1=(\mathscr{L}(E),\rho_1)$ and $\mathcal{M}_2=(\mathscr{L}(E),\rho_2)$ be $(q,r)$-polymatroids. They are said to be \textit{equivalent} if there exists an $\F_q$-linear automorphism of $E$ such that $\rho_1(U)=\rho_2(\varphi(U))$ for all $U \in \mathscr{L}(E)$. We will denote this fact in the remainder by the symbol $\mathcal{M}_1 \cong \mathcal{M}_2$.

Let $\mathcal{M}=(\mathscr{L}(E),\rho)$ be a $(q,r)$-polymatroid and let $$[X,Y]=\{ T \in \mathscr{L}(E) \colon X \leq T \leq Y \}$$ be an interval of~$E$. Then  $\M([X,Y])=([X,Y],\rho_{[X,Y]})$ is a $(q,r)$-polymatroid where 
$\rho_{[X,Y]}: [X,Y] \longrightarrow \N_0$ is the map defined by $$\rho_{[X,Y]}(T):= \rho(T)-\rho(X)$$
for every $T \in [X,Y]$. We say that $\M([X,Y])$ is a {\it minor} of $\M$. In the literature, it is  usual to denote by 
\begin{enumerate}
    \item 
     $\M|_Y:=\M([\textbf{0},Y])$, the {\it restriction} of $\M$ to $Y$,
   \item 
   $\M/X:=\M([X,E])$,  {\it contraction} of $\M$ {\it by} $X$ and  
  \item  $\M \setminus T:=\M |_{T^\perp}$ for $T \in \mathscr{L}(E)$, called the {\it deletion} of $T$ {\it from}~$\mathcal{M}$.
\end{enumerate}
If it is not necessary to specify $r$, we simply refer to a $(q,r)$-polymatroid as to a $q$-\textit{polymatroid}. We define a \textit{$q$-matroid} to be a $(q,1)$-polymatroid. 

\bigskip

 In \cite{GLJ, gorla2019rank,shiromoto19}, it is shown how any rank-metric code induces two $q$-polymatroids. In this regard, for a matrix $M\in\F_{q}^{m\times n}$ we denote by $\colsp(M)$ and $\rowsp(M)$ the column-space and the row-space of $M$ over~$\F_q$, respectively. These are subspaces contained in the vector spaces $\F^m_{q}$ and $\F^n_q$, respectively.

 Let $\mathcal{C} \subset \mathbb{F}^{m \times n}_{q}$ be an $\F_q$-linear rank-metric code.
	   For each subspace 
	   $U \leq \F_q^m$ and $V \leq \F_q^n$ respectively, we define
	   $$\mathcal{C}(U,c):=\{A \in \cC : \colsp(A) \leq U\} \, \text{ and } \, \mathcal{C}(V,r):=\{A \in \cC : \rowsp(A) \leq V\},$$ 
and it is straightforward to see that if $U,W \leq \F_q^m$, then
\begin{equation}\label{eq:relations}
\cC(U,c)+\cC(W,c) \subseteq \cC(U+W,c)
\quad \textnormal{and} \quad
\cC(U \cap W,c) = \cC(U,c) \cap \cC(W,c).
\end{equation}
Similarly, if $V,T \leq \F_{q}^n$, then
\[
\cC(V,r)+\cC(T,r) \subseteq \cC(V+T,r)
\quad \textnormal{and} \quad
\cC(V \cap T,r) = \cC(V,r) \cap \cC(T,r).
\]

Now  consider $E=\F_q^m$ and $\rho_c: \mathscr{L} (E) \longrightarrow \N_{0}$  defined by
	   	$$\displaystyle \rho_c(U):=\dim \cC-\dim \cC(U^{\perp},c),$$
        where $U^{\perp}$ is the orthogonal complement of $U$ with respect the  standard inner product of $E$.
	   	In \cite[Theorem 5.2]{gorla2019rank} the authors proved that $(\mathscr{L}(E),\rho_c)$ is a $(q,n)$-polymatroid which we denote in the following by $\M_c[\cC]$. Similarly, we may define a rank function $\rho_r$ in the following way: $$\rho_r(V)=\dim \cC-\dim \cC(V^\perp,r)$$
for all $V \leq E$, where $E=\F_q^n$. This map is also shown to provide a $(q,m)$-polymatroid which is denoted by $\mathcal{M}_r[\cC]$. However, for our purposes, it will be enough to consider only the column $q$-polymatroid $\M_c[\cC]$.


\section{\texorpdfstring{$q$}{q}-Polymatroids of restricted rank-metric codes}
\label{qpolyrestricted}
From now on, we fix $E = \mathbb{F}_{q^\ell}^n$, with $\ell \in \{1,2\}$. We set $E$ to be $\mathbb{F}_{q^2}^n$ when dealing with the Hermitian case, and to be $\mathbb{F}_q^n$ for the alternating or symmetric case.
We denote by $\mathscr{L}(E)$ the lattice of $\mathbb{F}_{q^\ell}$-subspaces of $E$.

\begin{lemma}\label{begin1}
Let $\cC$ be a code in $\cX_{n,q}$ and let 
\begin{equation}\label{sigma}
\sigma:(\alpha_1\dots,\alpha_n) \in E\longrightarrow (\alpha_1^q\dots,\alpha_n^q) \in E.
\end{equation}
Then for any $\F_{q^\ell}$-subspace $U \leq E$, $\ell \in \{1,2\}$, we have that
$$\cC(U^\sigma,r)=\cC(U,c).$$
\end{lemma}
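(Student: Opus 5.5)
The plan is to show that every matrix $A\in\cX_{n,q}$ satisfies $\rowsp(A)=\colsp(A)^\sigma$, where $\sigma$ is the map in \eqref{sigma} applied to subspaces. For $\ell=1$ the map $\sigma$ is the identity on $E=\F_q^n$, since $\alpha^q=\alpha$ for $\alpha\in\F_q$. For $\ell=2$ it is the conjugation of $\F_{q^2}$ applied coordinatewise. With this identity in hand, the lemma is a one-line consequence of the definitions. Indeed, $A\in\cC(U^\sigma,r)$ iff $\rowsp(A)\le U^\sigma$, iff $\colsp(A)^\sigma\le U^\sigma$, iff $\colsp(A)\le U$ (because $\sigma$ is a bijective, inclusion-preserving involution on $\F_{q^\ell}$-subspaces), iff $A\in\cC(U,c)$.

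For the key identity I will split into the three cases.

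\emph{Alternating and symmetric cases ($\ell=1$).} Here $A^t=\pm A$. The row space of $A$ is the column space of $A^t$, and multiplying by $-1$ does not change the column space. Hence $\rowsp(A)=\colsp(A)=\colsp(A)^\sigma$.

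\emph{Hermitian case ($\ell=2$).} Here $A^t=\bar A$, where the bar denotes $\sigma$ applied entrywise. Then $\rowsp(A)=\colsp(A^t)=\colsp(\bar A)$. The columns of $\bar A$ are the $\sigma$-images of the columns of $A$, and $\sigma$ is semilinear: $\sigma(\lambda v+\mu w)=\lambda^q\sigma(v)+\mu^q\sigma(w)$. So the $\F_{q^2}$-span of the columns of $\bar A$ equals $\sigma$ applied to the $\F_{q^2}$-span of the columns of $A$, that is, $\colsp(\bar A)=\colsp(A)^\sigma$.

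The only point needing care, and so the main obstacle, is checking that $U^\sigma$ is again an $\F_{q^\ell}$-subspace and that $\sigma$ commutes with taking spans. Both follow from semilinearity together with the fact that the Frobenius map is a field automorphism of $\F_{q^2}$, hence surjective on scalars. One must also keep the row and column spaces in the same ambient space $E$; this holds because the matrices are square of order $n$.
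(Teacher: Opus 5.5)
Your proof is correct and takes essentially the same approach as the paper. In the Hermitian case the paper likewise uses $M^t=M^\sigma$ to identify the $\sigma$-images of the rows with the columns (applying $\sigma$ to both sides of the inclusion, since $\sigma$ is an involution). Your remark that $A^t=\pm A$ gives $\rowsp(A)=\colsp(A)$ in the alternating and symmetric cases simply makes explicit what the paper calls trivial.
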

\begin{proof}
\noindent 
The result is trivial for the codes in $\Alt_{n,q}$ and $\Sym_{n,q}$ because in both cases $\sigma$ is simply the identity map.
Let $\cC \subset \Her_{n,q}$ and $M \in \cC$. For any $i,j \in \{1,2,\ldots,n\}$, let $M_i$ denote the $i$-th row of $M$ and let $M^j$ denote the $j$-th column of $M$.  Since $M \in \Her_{n,q}$,

\begin{equation}
\begin{split}
 \cC(U^\sigma,r)&=\{M \in \cC \colon  \langle M_1,M_2,\ldots M_n \rangle_{\F_{q^2}} \leq U^\sigma\}\\
 &=\{M \in \cC \colon  \langle M^\sigma_1,\ldots,M^\sigma_n \rangle_{\F_{q^2}} \leq U\}\\
  &=\{M \in \cC \colon  \langle M^1,\ldots,M^n\rangle_{\F_{q^2}} \leq U\}=\cC(U,c)\\
 \end{split}
 \end{equation}
\end{proof}

\begin{remark}\label{remark} Let $\cC$ be an $\F_q$-linear $d$-code of $\cX_{n,q}$. By Lemma~\ref{begin1}, if $\cC$ is alternating or symmetric, then $\mathcal{M}_c[\cC] = \mathcal{M}_r[\cC]$ and they are both $(q,n)$-polymatroid. 

On the other hand, if $\cC \subset \Her_{n,q}$, it is  an $\F_q$-subspace of $\F_{q^\ell}^{n \times n}$ and using arguments similar to those used in \cite[Theorem 4.3]{gorla2019rank}, one easily shows that  $\mathcal{M}_c[\cC] = (\mathscr{L}(E), \rho_c)$ and $\mathcal{M}_r[\cC] = (\mathscr{L}(E), \rho_r)$ are $(q^2,n)$-polymatroids.
In this case Lemma \ref{begin1}, implies that
\[
\rho_c(U) = \rho_r(U^\sigma)
\quad \text{for all } U \leq \F_{q^2}^n,
\]
hence, the $(q^2,n)$-polymatroids associated with the columns and with the rows are not equivalent in the sense of the definition given in Subsection~\ref{q-polymatroid}.
Indeed, the automorphism $\sigma$ in~\eqref{sigma} is an $\F_q$-linear automorphism of $\F_{q^2}^n$.
\end{remark}
\medskip

With these observations in mind, in the remainder of the article, for any code $\cC$ in $\cX_{n,q}$, we will consider only the $q$-polymatroid $\mathcal{M}_c[\cC]$, which we denote by $\mathcal{M}[\cC]$.

\begin{lemma}\label{lem:equiv}
For any two $\F_{q^\ell}$-subspaces $U,V$ of $E$ of the same dimension, $\cX_{n,q}(U)$ and $\cX_{n,q}(V)$ are equivalent.  
\end{lemma}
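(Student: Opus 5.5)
Here $\cX_{n,q}(U)$ denotes $\cC(U,c)$ with $\cC=\cX_{n,q}$, that is, the matrices of $\cX_{n,q}$ whose column space is contained in $U$. The plan is to exhibit an explicit isometry of $\cX_{n,q}$ of the form $\Psi_{1,P,0,\mathrm{id}}\colon X\mapsto PX(P^\sigma)^t$ that carries $\cX_{n,q}(U)$ onto $\cX_{n,q}(V)$. We take $\circ$ and $\tau$ trivial, $a=1$ and $R=0$, and choose $P\in\GL(n,q^\ell)$ with $PU=V$. Such a $P$ exists because $GL(n,q^\ell)$ acts transitively on $\F_{q^\ell}$-subspaces of $E$ of a fixed dimension: extend a basis of $U$ and a basis of $V$ to bases of $E$, and let $P$ map one extended basis to the other. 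In the Hermitian case $\sigma$ is the conjugation $x\mapsto x^q$ applied entrywise, so $(P^\sigma)^t$ is the conjugate transpose $P^*$. In the other cases it is just $P^t$.

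\textbf{Step 1 (the map preserves $\cX_{n,q}$).} For $X$ alternating, symmetric, or Hermitian, the matrix $PXP^t$, respectively $PXP^*$, has the same type. This is the standard fact underlying the isometry list recalled in the paper. For alternating matrices one also checks that the diagonal stays zero, since $x^t(PXP^t)x=(P^tx)^tX(P^tx)=0$.

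\textbf{Step 2 (control of column spaces).} Since $P^t$ (or $P^*$) is invertible,
$$\colsp(PX(P^\sigma)^t)=P\,\colsp(X(P^\sigma)^t)=P\,\colsp(X).$$
Hence $\colsp(X)\le U$ if and only if $\colsp(PX(P^\sigma)^t)\le PU=V$. So the map sends $\cX_{n,q}(U)$ into $\cX_{n,q}(V)$. Applying the same argument to $P^{-1}$ gives the reverse inclusion, so the map is a bijection between the two spaces. It is $\F_q$-linear and rank preserving, hence an equivalence.

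The only point needing care is the Hermitian case. There the lattice consists of $\F_{q^2}$-subspaces, so we must use $P\in\GL(n,q^2)$ and the conjugate transpose to keep the image Hermitian. The column-space computation in Step 2 is unaffected by this, because $P^*$ is still invertible. No other step presents a real obstacle.
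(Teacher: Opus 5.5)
Your proof is correct and follows the paper's own argument: choose $P\in\GL(n,q^\ell)$ with $PU=V$ and apply the isometry $X\mapsto PX(P^\sigma)^t$. You also write out two checks the paper only asserts, namely that the map preserves the matrix type and that $\colsp\bigl(PX(P^\sigma)^t\bigr)=P\,\colsp(X)$, which gives the bijection $\cX_{n,q}(U)\to\cX_{n,q}(V)$.
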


\begin{proof}
    Let $U,V \leq E$ both have the same dimension.
    Since $\GL(n,q^\ell)$, $\ell \in \{1,2\}$, acts transitively on the subspaces of $E$ of the same dimension, there exists an automorphism of $E$ that maps $U$ onto $V$. 
    Let $G  \in  \mathrm{GL}(n,q^\ell)$ be the matrix associated to this automorphism with respect to the canonical basis of $E$. Since $G$ is invertible, the map $M \in \cX_{n,q} \mapsto GM(G^{\sigma})^t \in \cX_{n,q}$ is a rank-preserving automorphism of $\cX_{n,q}$ that maps $\cX_{n,q}(U)$ onto $\cX_{n,q}(V)$.
\end{proof}

The shortening $\cX_{n,q}(U)$ of the code $\cX_{n,q}$, defined by the subspace $U \leq E,$ will also play a crucial role in the following. Firstly. it is straightforward to see that $\cX_{n,q}(U)$ is an $\F_q$-vector subspace of $\cX_{n,q}$. Moreover, we have the following

\begin{proposition}\label{trivial}
    Let $U$ be an $u$-dimensional $\F_{q^\ell}$-subspace of $E$. Then

    \begin{equation*}
        \dim \cX_{n,q}(U)=\begin{cases}
            \binom{u}{2}  & \text{ if } \cX_{n,q}=\Alt_{n,q}\\
             \binom{u+1}{2} & \text{ if } \cX_{n,q}=\Sym_{n,q} \\
              u^2     & \text{ if }\cX_{n,q}=\Her_{n,q}
        \end{cases}
    \end{equation*}
    
\end{proposition}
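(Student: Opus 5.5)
By Lemma~\ref{lem:equiv}, $\dim \cX_{n,q}(U)$ depends only on $u=\dim U$. So I will take $U=\langle e_1,\dots,e_u\rangle_{\F_{q^\ell}}$, the span of the first $u$ canonical basis vectors of $E$, and compute directly. Lemma~\ref{lem:equiv} applies because the congruence $M\mapsto GM(G^\sigma)^t$ is an $\F_q$-linear bijection of $\cX_{n,q}$ carrying $\cX_{n,q}(U)$ onto $\cX_{n,q}(V)$, so it preserves $\F_q$-dimension.

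The first step is to identify $\cX_{n,q}(U)$ for this $U$. A matrix $M\in\cX_{n,q}$ has $\colsp(M)\leq U$ exactly when its rows $u+1,\dots,n$ vanish. By the defining symmetry of $\cX_{n,q}$ ($M^t=-M$, $M^t=M$, or $M^t=M^\sigma$ in the Hermitian case), vanishing rows $i>u$ forces vanishing columns $j>u$: for example $m_{ji}=\pm m_{ij}$ or $m_{ji}=m_{ij}^q$. Hence
\[
M=\begin{pmatrix} M' & 0\\ 0 & 0\end{pmatrix}, \qquad M'\in \cX_{u,q}.
\]
Conversely, every such block matrix lies in $\cX_{n,q}(U)$. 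This gives an $\F_q$-linear isomorphism $\cX_{n,q}(U)\cong \cX_{u,q}$.

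The second step is the count of $\dim_{\F_q}\cX_{u,q}$. An alternating matrix is determined freely by its strictly upper-triangular entries in $\F_q$, so the dimension is $\binom{u}{2}$. A symmetric matrix is determined by its upper-triangular entries including the diagonal, giving $\binom{u+1}{2}$. A Hermitian matrix has diagonal entries satisfying $a_{ii}=a_{ii}^q$, so they lie in $\F_q$ and contribute $u$. Its strictly upper entries are free in $\F_{q^2}$ and contribute $2\binom{u}{2}$. The total is $u+u(u-1)=u^2$.

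There is no real obstacle here. The only point needing care is the Hermitian case: $U$ is an $\F_{q^2}$-subspace while the dimension is taken over $\F_q$, and one must check that column-space containment is taken over $\F_{q^2}$, consistent with Lemma~\ref{begin1}.
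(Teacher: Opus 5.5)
Your proposal is correct and follows essentially the same route as the paper. You reduce via Lemma~\ref{lem:equiv} to $U=\langle \ee_1,\dots,\ee_u\rangle$, identify $\cX_{n,q}(U)$ with block matrices having an arbitrary $\cX_{u,q}$ block in the top-left corner, and count; you also make explicit two steps the paper leaves implicit, namely that vanishing rows force vanishing columns and the computation of $\dim_{\F_q}\cX_{u,q}$.
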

\begin{proof} 
The statement holds trivially true for $U= \{\textbf{0}\}$. Let $u \in \mathbb{Z}^+$ and consider $V=\langle \ee_1,\ee_2\ldots,\ee_u \rangle_{\F_{q^\ell}}$ where $\ee_1,\ee_2\ldots,\ee_u$ are the first $u$ elements of the standard basis of $E$. It is easy to see that 
   \begin{equation*}
   \cX_{n,q}(V)= \left \{\begin{pmatrix}
       A & O_{u \times (n-u)} \\
       O_{(n-u) \times u} & O_{(n-u) \times (n-u)}
   \end{pmatrix} \colon A \in \cX_{u,q} \right \}.
   \end{equation*}
Clearly, in this case we have  $|\cX_{n,q}(V)|=|\cX_{u,q}|$ and so the statement holds for $V$. By Lemma \ref{lem:equiv},  we have that $|\cX_{n,q}(U)|=|\cX_{n,q}(V)|$ for any $u$-dimensional vector space $U \leq E$ and so the result follows.
\end{proof}

Lemma \ref{lem:equiv} and Proposition \ref{trivial} extend \cite[Lemma 61]{Dualityofcodessupported} and \cite[Lemma 4.3]{gianira}.\\
It is well known that equivalent codes have equivalent duals: $\cC_1, \cC_2 \subset \cX_{n,q}$ are equivalent if and only if  $\cC_1^*$ and $\cC_2^*$ are equivalent.


\noindent Let $V = \langle \ee_1,\dots,\ee_{v} \rangle_{\F_{q^\ell}}$, it is straightforward to check that
\begin{equation*}
   \cX_{n,q}(V)^*= 
   \left \{
   \begin{pmatrix}
      O_{v \times v}& A \\
         \bar{A}^t  & B
   \end{pmatrix} 
   \colon A \in \F_{q^\ell}^{v \times (n-v)},B \in \cX_{n-v,q} \right\},
   \end{equation*}
   where 
   \begin{equation}\label{unifying-bar}
     \bar{A}:=
     \begin{cases}
         -A & \textnormal{ if } \cX_{n,q}=\Alt_{n,q}\\
         A & \textnormal{ if } \cX_{n,q}=\Sym_{n,q}\\
         A^\sigma & \textnormal{ if } \cX_{n,q}=\Her_{n,q}
     \end{cases}.
   \end{equation}
   Then we have that $\dim \cX_{n,q}(V)^* =  \dim \cX_{n,q} - \dim \cX_{n,q}(V)$, hence it is an immediate consequence of Lemma \ref{lem:equiv} and Proposition \ref{trivial} that for any $u$-dimensional $\F_{q^\ell}$-subspace $U\leq E$, we have: 
    \begin{equation}\label{anticode-dual}
        \dim\cX_{n,q}(U)^*=\begin{cases}
         \frac{(n-u)(n+u-1)}{2}  & \text{if} \cX_{n,q}=\Alt_{n,q}\\
          \frac{(n-u)(n+u+1)}{2} & \text{if} \cX_{n,q}=\Sym_{n,q} \\
         (n-u)(n+u)  & \text{if}\cX_{n,q}=\Her_{n,q}
        \end{cases}.
    \end{equation}
\medskip
\noindent Along the lines of the works \cite{gorla2019rank,gorla} in which the $q$-polymatroids associated with optimal anticodes were studied, we now describe the $q$-polymatroid $\mathcal{M}[\cX_{n,q}(V)]$ where $V$ is a fixed subspace of $E$.  Note that by Lemma \ref{lem:equiv}, its rank function depends only on the dimension of $V$.
\begin{theorem}
Let $V$ be a $v$-dimensional $\F_{q^\ell}$-vector subspace of $E$, $\ell \in \{1,2\}$. The rank function $\rho$ of the 
$q$-polymatroid $\mathcal{M}[\cX_{n,q}(V)]$ is given by
\begin{equation}
    \rho(U)=
    \begin{cases}
        \binom{v}{2}-\binom{\dim_{\F_q}(V \cap U^\perp)}{2} & \text{if } \cX_{n,q} = \Alt_{n,q},\\[4pt]
         \binom{v+1}{2}-\binom{\dim_{\F_q}(V \cap U^\perp)+1}{2}  & \text{if } \cX_{n,q} = \Sym_{n,q},\\[4pt]
        (v-\dim_{\F_{q^2}}(V \cap U^\perp))(v+\dim_{\F_{q^2}}(V \cap U^\perp))  & \text{if } \cX_{n,q} = \Her_{n,q},
    \end{cases}
\end{equation}
where $U \in \mathscr{L}(E)$.
\end{theorem}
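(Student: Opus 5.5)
By definition of $\mathcal{M}[\cC]=\mathcal{M}_c[\cC]$ with $\cC=\cX_{n,q}(V)$, we have
\[
\rho(U)=\dim \cX_{n,q}(V)-\dim \cC(U^\perp,c).
\]
Our plan is to identify the shortened subcode $\cC(U^\perp,c)$ with another anticode of the same type, and then read both dimensions off Proposition~\ref{trivial}.

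First, $\dim \cX_{n,q}(V)$ is given directly by Proposition~\ref{trivial} with $u=v$: it equals $\binom{v}{2}$, $\binom{v+1}{2}$ or $v^2$ in the three cases.

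Second, we compute the shortened subcode. A matrix $M\in \cX_{n,q}(V)$ lies in $\cC(U^\perp,c)$ exactly when $\colsp(M)\le U^\perp$. Since we already have $\colsp(M)\le V$, this is equivalent to $\colsp(M)\le V\cap U^\perp$. Hence
\[
\cC(U^\perp,c)=\cX_{n,q}(V)\cap \cX_{n,q}(U^\perp,c)=\cX_{n,q}(V\cap U^\perp),
\]
which is the intersection relation in \eqref{eq:relations} applied inside the ambient code $\cX_{n,q}$.

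The one point to check is the meaning of $\cX_{n,q}(W)$ in each case. In the Alt/Sym cases, column space and row space coincide by symmetry. In the Hermitian case, Lemma~\ref{begin1} shows that the column condition alone is the one used in the definition of $\cX_{n,q}(W)$ in Proposition~\ref{trivial}, so no extra row condition needs to be imposed. We must also check that $V\cap U^\perp$ is an $\F_{q^\ell}$-subspace. This holds because in the Hermitian setting $\mathscr{L}(E)$ consists of $\F_{q^2}$-subspaces and $U^\perp$ is taken with respect to an $\F_{q^2}$-form. So Proposition~\ref{trivial} applies with $u=\dim_{\F_{q^\ell}}(V\cap U^\perp)$. (The statement writes $\dim_{\F_q}$ in the first two cases, where $\ell=1$, so this is consistent.)

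Third, we substitute: $\rho(U)=\dim\cX_{n,q}(V)-\dim\cX_{n,q}(V\cap U^\perp)$. For Alt and Sym this gives the two binomial differences directly. For Her it gives $v^2-w^2=(v-w)(v+w)$ with $w=\dim_{\F_{q^2}}(V\cap U^\perp)$.

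The main obstacle is the Hermitian bookkeeping. We must make sure the shortening is defined by $\F_{q^2}$-column spaces, and that the perp operation and intersection stay within $\F_{q^2}$-subspaces, so that the $u^2$ count in Proposition~\ref{trivial} is the right one to apply. Everything else is a direct substitution.
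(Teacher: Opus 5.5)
Your proposal is correct and follows the same route as the paper. The paper's proof just cites the definition $\rho(U)=\dim\cC-\dim\cC(U^\perp,c)$ together with Proposition~\ref{trivial}, and your explicit identification $\cX_{n,q}(V)(U^\perp,c)=\cX_{n,q}(V\cap U^\perp)$ is exactly the step it leaves implicit. The appeal to Lemma~\ref{begin1} in the Hermitian case is unnecessary, since $\cX_{n,q}(W)$ is defined directly by the $\F_{q^2}$-column-space condition, but it does no harm.
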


\begin{proof}
    The result follows immediately from Proposition \ref{trivial}
and by the definition of the rank function of the (column) $q$-polymatroid associated with a rank-metric code. \end{proof}

Let $\cC$ be an additive code in $\cX_{n,q}$ and let $U \in \mathscr{L}(E)$. Then, since $\cC(U)= \cC \cap \cX_{n,q}(U)$, one has

\begin{equation}\label{sizeformula}
    |\cC(U)|=\frac{|\cX_{n,q}(U)| |\cC|}{|\cX_{n,q}(U) + \cC|}
    =\frac{|\cX_{n,q}(U)||\cC^* \cap \cX_{n,q}(U)^*|}
    {|\cC^*|}.
\end{equation}

We can apply \eqref{sizeformula} to analyze the rank function $\rho$ of the $q$-polymatroids $\mathcal{M}[\cC]$ associated with certain classes of restricted $\F_q$-linear codes. \\

Note that if $\cC=\cX_{n,q}$, the $q$-polymatroid is completely determined. Similarly, if $\cC$ is an $\F_q$-linear code with $d=n$ (which, in the case of an alternating code, implies that $d$ is even), then the $q$-polymatroid is also completely determined by its parameters. In this case, the rank function of $\mathcal{M}[\cC]$ is $\rho(U)=\dim \cC$ for any non-null subspace  $ U \in \mathscr{L}(E)$.

\begin{proposition} \label{polymatroid}
    Let $\cC \subset \cX_{n,q}$  be an $\F_q$-linear $d$-code. 
    Then the rank function of $\mathcal{M}[\cC]=(\mathscr{L}(E),\rho)$ satisfies the following: 
    \begin{equation} \label{rank-function}
    \rho(U)= 
    \begin{cases}
        \dim \cC & \textnormal{if } u > n-d \\
        \dim \cX_{n,q}(U^\perp)^* & \textnormal{if } \max\rk \cX_{n,q}(U^\perp)^* < d^*,
    \end{cases}     
    \end{equation}
    for any $u$-dimensional subspace $U \leq E$.
\end{proposition}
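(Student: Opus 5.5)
The plan is to unwind the definition $\rho(U)=\dim\cC-\dim\cC(U^\perp)$, where $\cC(U^\perp)=\cC\cap\cX_{n,q}(U^\perp)$ is the shortened subcode. We then treat the two cases of \eqref{rank-function} separately: the first by a rank argument on $\cC(U^\perp)$ itself, the second by the size formula \eqref{sizeformula}, which converts the shortened subcode into an intersection of $\cC^*$ with the dual anticode $\cX_{n,q}(U^\perp)^*$.

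\emph{First case.} Suppose $u>n-d$. Then $\dim U^\perp=n-u<d$. Every matrix $A\in\cC(U^\perp)$ has column space contained in $U^\perp$, so $\rk A\le n-u<d$. Since $\cC$ is linear with minimum distance $d$, its only element of rank less than $d$ is the zero matrix. Hence $\cC(U^\perp)=\{0\}$ and $\rho(U)=\dim\cC$.

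In the Hermitian case we must be careful about fields. Here $U$ is an $\F_{q^2}$-subspace and $\rk A$ is the $\F_{q^2}$-rank. The column space of $A$ is an $\F_{q^2}$-subspace of $U^\perp$, whose $\F_{q^2}$-dimension is $n-u$, so the same bound $\rk A\le n-u$ holds.

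\emph{Second case.} Take $\F_q$-logarithms in \eqref{sizeformula} with $U^\perp$ in place of $U$:
\[
\dim\cC(U^\perp)=\dim\cX_{n,q}(U^\perp)+\dim\bigl(\cC^*\cap\cX_{n,q}(U^\perp)^*\bigr)-\dim\cC^*.
\]
Assume $\maxrk\,\cX_{n,q}(U^\perp)^*<d^*$. Every nonzero element of $\cC^*$ has rank at least $d^*$, while every element of $\cX_{n,q}(U^\perp)^*$ has rank below $d^*$. So the intersection $\cC^*\cap\cX_{n,q}(U^\perp)^*$ is trivial, and
\[
\dim\cC(U^\perp)=\dim\cX_{n,q}(U^\perp)-\dim\cC^*.
\]
Now use $\dim\cC+\dim\cC^*=\dim\cX_{n,q}$, which is \eqref{size-relation}, together with $\dim\cX_{n,q}(U^\perp)+\dim\cX_{n,q}(U^\perp)^*=\dim\cX_{n,q}$, which is the remark preceding \eqref{anticode-dual}. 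These give
\[
\rho(U)=\dim\cC-\dim\cX_{n,q}(U^\perp)+\dim\cC^*=\dim\cX_{n,q}-\dim\cX_{n,q}(U^\perp)=\dim\cX_{n,q}(U^\perp)^*,
\]
as claimed.

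The only point needing real care is the identity \eqref{sizeformula}. It uses $|\cC+\cX_{n,q}(U)|\cdot|(\cC+\cX_{n,q}(U))^*|=|\cX_{n,q}|$ and $(\cC+D)^*=\cC^*\cap D^*$, both standard for the nondegenerate pairing on $\cX_{n,q}$. We assume these as stated earlier in the paper. Everything else is a direct computation.
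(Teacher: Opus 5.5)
Your proposal is correct and follows essentially the same route as the paper. The first case holds because every element of $\cC(U^\perp)$ has rank at most $n-u<d$; the second follows by applying \eqref{sizeformula} to $U^\perp$ together with \eqref{size-relation}, once the intersection $\cC^*\cap\cX_{n,q}(U^\perp)^*$ is trivial. Your remarks on the $\F_{q^2}$-rank in the Hermitian case and on the identity $\dim\cX_{n,q}(U^\perp)+\dim\cX_{n,q}(U^\perp)^*=\dim\cX_{n,q}$ only make explicit what the paper leaves implicit.
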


\begin{proof}
   By definition, we have $\rho(U) = \dim\cC-\dim\cC(U^\perp)$.
   If $n-u < d$ then $\cC(U^\perp)=\{\textbf{0}\}$ and so $\rho(U)=\dim(\cC)$.
   Using (\ref{sizeformula}) and taking into account (\ref{size-relation}), we have
   \begin{equation*}
    |\cC(U^\perp)|=\frac{|\cC||\cC^* \cap \cX_{n,q}(U^\perp)^*|}{|\cX_{n,q}(U^\perp)^*|}.
\end{equation*}
   If $\max\rk \cX_{n,q}(U^\perp)^* < d^*$, we have $\cC^* \cap \cX_{n,q}(U^\perp)^*=\{\textbf{0}\}$ and hence 
   $$\rho(U) = \dim \cC - (\dim(\cC)-\dim\cX_{n,q}(U^\perp)^*),$$ 
   from which the result follows.
\end{proof}

We now compute the value of $\max\rk \cX_{n,q}(U)^*$ for an $u$-dimensional vector space $U$ of $E$. In order to do it, we will make use of the notion of {\it generalized inverse}; see \cite{FULTON197823} and \cite{ matsaglia_styan} and the references therein. For the sake of completeness, we provide here the definition
\begin{definition}
    Let $A \in \F_{q}^{m \times n}$. A \textit{generalized inverse} or a \textit{g-inverse} of $A$ is any matrix $A^- \in \F_{q}^{n \times m}$ such that $A A^- A = A$, 
\end{definition}
As consequence of \cite[Theorem 3.1]{FULTON197823}, we have that any matrix has a generalized inverse.

\begin{proposition}\label{maxrank} Let $U$ be an $u$-dimensional subspace of $E$. Then 
\begin{equation*} 
\max\rk \cX_{n,q}(U)^*= 
\begin{cases}
\max \rk \cX_{n,q} & \text{if} \quad  u \leq \lfloor \frac{n}{2} \rfloor \\
2(n-u) & \text{if} \quad  u \geq \lceil \frac{n}{2} \rceil
\end{cases}
\end{equation*}
\end{proposition}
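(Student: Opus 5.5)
By Lemma~\ref{lem:equiv}, and since equivalent codes have equivalent duals, we may assume $U=V=\langle \ee_1,\dots,\ee_u\rangle_{\F_{q^\ell}}$. Then $\cX_{n,q}(U)^*$ is the explicit space of block matrices
\[
M=\begin{pmatrix} O_{u\times u} & A\\ \bar A^t & B\end{pmatrix},\qquad A\in\F_{q^\ell}^{u\times(n-u)},\ B\in\cX_{n-u,q},
\]
and the task reduces to computing the largest rank attained by such matrices. The plan is to prove a general upper bound and then exhibit a matrix attaining it in each regime.

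\emph{Upper bound.} The first $u$ rows of $M$ are $(O\,|\,A)$, and their span has dimension $\rk A\le \min\{u,n-u\}$. The last $n-u$ rows contribute at most $n-u$. Hence $\rk M\le \rk A+(n-u)\le 2(n-u)$. Trivially also $\rk M\le \maxrk\cX_{n,q}$, which is $n$ for $\Sym$ and $\Her$ and $2\lfloor n/2\rfloor$ for $\Alt$. For $u\ge\lceil n/2\rceil$ we have $2(n-u)\le 2\lfloor n/2\rfloor\le \maxrk \cX_{n,q}$, so the bound $2(n-u)$ is the relevant one. For $u\le\lfloor n/2\rfloor$ the bound is $\maxrk\cX_{n,q}$. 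A sharper identity will be convenient for sanity checks: after choosing $A$ of full rank, row and column operations that preserve the $\cX$-structure (congruence $M\mapsto PM(P^\sigma)^t$) kill the part of $B$ in the span of $A$. This is where a generalized inverse $A^-$ enters, namely to write $B=\bar A^t X+\dots$ and eliminate it, giving $\rk M=2\rk A+\rk B'$ with $B'$ a compression of $B$ to the complement.

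\emph{Lower bound / construction.} For $u\ge\lceil n/2\rceil$, take $A=(I_{n-u}\,|\,O)^t$, i.e.\ a $u\times(n-u)$ matrix of rank $n-u$, and $B=O$. Then $M$ is congruent to $\begin{pmatrix}O&I\\ \pm I&O\end{pmatrix}$ padded with zeros, of rank $2(n-u)$. For $u\le\lfloor n/2\rfloor$, take $A=(I_u\,|\,O)$ of rank $u$. Then $M$ decomposes, up to simultaneous permutation of rows and columns, as a $2u\times 2u$ block $\begin{pmatrix}O&I_u\\ \pm I_u& B_{11}\end{pmatrix}$, which is nonsingular for any $B_{11}$, direct-summed with a block $B_{22}\in\cX_{n-2u,q}$. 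Choose $B_{22}$ of maximal rank in $\cX_{n-2u,q}$ and the remaining blocks of $B$ zero. Then $\rk M=2u+\maxrk\cX_{n-2u,q}$. This equals $n$ in the symmetric and Hermitian cases, and $2u+2\lfloor (n-2u)/2\rfloor=2\lfloor n/2\rfloor$ in the alternating case. This is $\maxrk\cX_{n,q}$ in all cases.

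The only delicate point is the block-rank computation, namely justifying that the off-diagonal blocks coupling $B_{11}$ and $B_{22}$ (set to zero here) do not spoil the rank, and checking that the parity constraint in the alternating case matches at $u=\lfloor n/2\rfloor$ versus $\lceil n/2\rceil$ when $n$ is odd. Both regimes give $n-1$ there, so the two formulas agree and the statement is consistent. This is routine once we use explicit choices; the generalized-inverse elimination is needed only if one wants the exact rank formula for arbitrary elements, not for the maximum.
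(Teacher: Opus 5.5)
Your proof is correct, and it follows the paper's outline. You reduce to $U=\langle \ee_1,\dots,\ee_u\rangle$ via Lemma~\ref{lem:equiv}, use the block description of $\cX_{n,q}(U)^*$, and build the same extremal matrices: $A=(I_u\,|\,O)$ with $B=\mathrm{diag}(O,B')$ for $u\le\lfloor n/2\rfloor$, and $B=O$ with $\rk A=n-u$ for $u\ge\lceil n/2\rceil$.

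The difference is in the upper bound. The paper uses the Matsaglia--Styan rank formula $\rk(C)=2\rk(A)+\rk\bigl((I_{n-u}-\bar A^t(\bar A^t)^-)B(I_{n-u}-A^-A)\bigr)$, which involves generalized inverses. From it the paper deduces $\rk(C)\le\min\{2\rk(A)+\rk(B),\,n-u+\rk(A)\}$, but only the second term is actually used. You get exactly that term directly, from subadditivity of rank over the row blocks $(O\,|\,A)$ and $(\bar A^t\,|\,B)$.

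Your argument is shorter and self-contained, and as you say, the generalized inverse buys nothing for the maximum. The paper's formula does give the exact rank of every element of $\cX_{n,q}(U)^*$, which would matter for finer information such as its rank distribution. Your ``sharper identity'' paragraph is heuristic and not load-bearing, so you can simply drop it.
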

\begin{proof}
If $U=\{\textbf{0}\}$, then $\cX_{n,q}(U)^*=\cX_{n,q}$ and hence 
$\max\rk \cX_{n,q}(U)^*= \max \rk \cX_{n,q}$.
By Lemma \ref{lem:equiv}, it is enough to show the statement for $U=\langle \ee_1,\ldots,\ee_u\rangle$ where $\ee_i$ is the $i$-th canonical vector of $E$. In such a case, any matrix in $\cX_{n,q}(U)^*$ has the following form:
\begin{equation*}
C=
\begin{pmatrix}{}
O_{u \times u}  & A \\
\bar{A}^t & B
\end{pmatrix},
\end{equation*}
where $O_{u \times u}$ is the zero matrix in  $\F_{q^\ell}^{u \times u}$, $A \in \F_{q^\ell}^{u \times (n-u)}$ and $B \in \cX_{n-u,q}$. 
By \cite[Theorem 19(8.3) and (2.32)]{matsaglia_styan}, we have that:
\begin{align*}
\rk(C) & = 2 \rk(A) + \rk((I_{n-u}-\bar{A}^t(\bar{A}^t)^-)B(I_{n-u}-A^- A))  \\
& \leq 2 \rk(A)+ \min \{\rk(B), n-u-\rk(A)\}\\
& = \min\{ 2\rk(A)+\rk(B),n-u+\rk(A)\}.
\end{align*}


Let $u \leq \lfloor \frac{n}{2} \rfloor$ and consider the following choice of matrix $C$ in $\cX_{n,q}(U)^*$:
\begin{equation*}
C= \begin{pmatrix}
O_{u\times u} & I_u & O_{u \times (n-2u)} \\
\bar{I}_u & O_{u \times u} & O_{u \times (n-2u)} \\
O_{(n-2u) \times u} & O_{(n-2u) \times u } 
 & B'
  \end{pmatrix}
    \end{equation*}
where $B' \in \cX_{n-2u,q}$ with maximum rank.
  Then $\rk(C) = 2u + \rk(B') = 2u + \max \rk \cX_{n-2u,q} = \max \rk \cX_{n,q}$, which attains 
  the maximal value in $\cX_{n,q}$ and hence in $\cX_{n,q}(U)^*$.\\
Suppose now that $u \geq \lceil \frac{n}{2} \rceil$, in which case we have
$0 \leq \rk(A) \leq n-u$ and $0 \leq \rk(B) \leq  \max \rk \cX_{n-u,q}$.

Then, the maximum value of
$\min\{n-u+\rk(A),2\rk(A)+\rk(B)\},$
for $A$ and $B$ varying in $\F_{q^\ell}^{u \times (n-u)}$ and $\cX_{n-u,q},$ respectively, is $2(n-u)$. It is attained by choosing, for instance, the block matrix with $B=O_{(n-u) \times (n-u)}$ and $A$ with rank equal to $n-u$.
\end{proof}

As a direct consequence of the above proposition and of Proposition \eqref{polymatroid}, we have that if $\cC \subset \cX_{n,q}$ is an $\F_q$-linear $d$-code and $\rho$ is the rank function of the associated $q$-polymatroid $\mathcal{M}[\cC]$, then
\begin{equation} \label{rk-f-restricted}
    \rho(U)= 
    \begin{cases}
        \dim \cC & \textnormal{if } u > n-d \\
        \dim \cX_{n,q} (U^\perp)^* & \textnormal{if }  \min \{2u, \maxrk \cX_{n,q} \}< d^* .
    \end{cases}     
    \end{equation}
where $d^*$ is the minimum distance of $\cC^*$.

\medskip

Finally, in \cite[Theorem 5.5]{GLJ} the relation between the column $q$-polymatroid of a code $\cC$ and the one associated to the code obtained by puncturing 
$\cC$ is described. More precisely, let $\cC \subset \F_q^{m \times n}$ be an $\F_q$-linear code and $u \in \{1,\ldots,m-1\}$, then
\begin{equation}\label{punct.vs.del.}
    \mathcal{M}_c[\pi_u(N\cC)] \cong \mathcal{M}_c[\cC] \setminus T
\end{equation}
where $N=\begin{pmatrix} B \\ D \end{pmatrix} \in \GL(m,q)$, $D \in \F_{q}^{(m-u) \times m}$ and $T=\rowsp(D)^\perp$.

As a simple byproduct of the result above, we have the following statement.

\begin{proposition}
Let $\cC \subset \F_q^{m \times n}$ be an $\F_q$-linear code and let $u \in \{1,\ldots,m-1\}$. Then
\begin{equation}
\mathcal{M}_c[\pi_u(\cC)] \cong \mathcal{M}_c[\cC] \setminus U,
\end{equation}
where $U = \langle \ee_1, \ee_2, \ldots, \ee_u \rangle \leq \F_q^n$ and $\ee_i$ denotes the $i$-th vector of the standard basis of $\F_q^n$.
\end{proposition}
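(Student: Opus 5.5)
The plan is to obtain the statement as a special case of \eqref{punct.vs.del.}, i.e.\ of \cite[Theorem 5.5]{GLJ}, by taking $N$ to be the identity matrix $I_m \in \GL(m,q)$. With this choice, $\pi_u(N\cC)=\pi_u(\cC)$, so the left-hand side of \eqref{punct.vs.del.} is exactly $\mathcal{M}_c[\pi_u(\cC)]$. It then only remains to identify the subspace $T$ produced by that theorem.

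To do this, I write $I_m=\begin{pmatrix} B \\ D\end{pmatrix}$ with $B\in \F_q^{u\times m}$ the first $u$ rows and $D\in\F_q^{(m-u)\times m}$ the last $m-u$ rows of the identity. Since $\pi_u$ projects onto the last $m-u$ rows, this block decomposition is the one required in \eqref{punct.vs.del.}. Then $\rowsp(D)=\langle \ee_{u+1},\dots,\ee_m\rangle$. Its orthogonal complement with respect to the standard inner product is $T=\langle \ee_1,\dots,\ee_u\rangle=U$, because the standard basis vectors are pairwise orthogonal and the form is non-degenerate. Substituting gives $\mathcal{M}_c[\pi_u(\cC)]\cong \mathcal{M}_c[\cC]\setminus U$.

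The only point that needs care is bookkeeping of ambient spaces. The column $q$-polymatroid lives on the lattice of the space containing the column spaces, which is $\F_q^m$ (the statement writes $\F_q^n$; in the square case $m=n$ relevant for the paper these agree). One should also check that the deletion is taken with the same standard form used to define $\rho_c$, so that $\mathcal{M}_c[\cC]\setminus U=\mathcal{M}_c[\cC]|_{U^\perp}$ with $U^\perp=\langle\ee_{u+1},\dots,\ee_m\rangle$. This matches the fact that puncturing the first $u$ rows keeps the coordinates $u+1,\dots,m$.

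If one wants to avoid relying on the exact normalization in \cite{GLJ}, a direct check is also available. For $W\le \langle \ee_{u+1},\dots,\ee_m\rangle$, the matrices of $\cC$ whose columns lie in $W^{\perp}$ map under $\pi_u$ to matrices with columns in the corresponding complement inside $\F_q^{m-u}$. Together with the formula $\dim\pi_u(\cC)=\dim\cC-\dim\cC(\langle\ee_1,\dots,\ee_u\rangle,c)$, this recovers the rank-function identity. I expect none of this to be an obstacle; the result is a routine specialization.
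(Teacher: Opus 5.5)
Your proposal is correct and is essentially the paper's own proof: specialize \eqref{punct.vs.del.} to $N=I_m$, so that $D=\begin{pmatrix} O_{(m-u)\times u} & I_{m-u}\end{pmatrix}$ and $T=\rowsp(D)^\perp=\langle \ee_1,\ldots,\ee_u\rangle=U$. You are also right that $U$ should be a subspace of $\F_q^m$, the ambient space of the column $q$-polymatroid, rather than $\F_q^n$; the statement has a typo there.
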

\begin{proof}
    The result follows from the fact that for $N=I_m$ in \eqref{punct.vs.del.},
    \begin{equation*}
    D=
    \begin{pmatrix}
    O_{(m-u) \times u} & I_{m-u}        
    \end{pmatrix}
    \end{equation*}
and $U:=\rowsp(D)^\perp=\langle \ee_1,\ldots,\ee_u \rangle$.
\end{proof}

Let $1 \le u < n$ and consider \(V \in \mathscr{L}(\F_q^u)\) and
\(W \in \mathscr{L}(\F_q^n)\).
We denote by \(V^{\perp_u}\) and \(W^{\perp_n}\) the orthogonal complements
of \(V\) and \(W\) with respect to the standard inner products on
\(\F_q^u\) and on \(\F_q^n\), respectively.
Then the following result holds.

\begin{lemma}\label{orthogonal}
Let $1 \le u < n$ and let $
\psi : \mathbf{v} \in \F_q^{u} \longrightarrow  \mathbf{v}A \in\F_q^n, $
where $A = \left( I_u \;\; O_{u \times (n-u)} \right)$. 
Then, for any $V \in \mathscr{L}(\F_q^u)$,
\[
\psi(V)^{\perp_n} = \psi(V^{\perp_u}) \oplus U^{\perp_n},
\]
where $U = \operatorname{rowsp}(A)$.
\end{lemma}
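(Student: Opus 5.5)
We argue directly with coordinates. The map $\psi$ sends $\mathbf{v}=(v_1,\dots,v_u)$ to $(v_1,\dots,v_u,0,\dots,0)$, so it is injective with image $U=\rowsp(A)=\langle \ee_1,\dots,\ee_u\rangle$. Hence $U^{\perp_n}=\langle \ee_{u+1},\dots,\ee_n\rangle$, and in particular $U\cap U^{\perp_n}=\{\mathbf{0}\}$. Since $\psi(V^{\perp_u})\le U$, the sum $\psi(V^{\perp_u})+U^{\perp_n}$ is automatically direct.

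For the inclusion $\supseteq$, we use two facts. First, the inner product is preserved on the first block: $\psi(\mathbf{v})\cdot\psi(\mathbf{w})=\mathbf{v}\cdot\mathbf{w}$ for all $\mathbf{v},\mathbf{w}\in\F_q^u$. So for $\mathbf{w}\in V^{\perp_u}$ and $\mathbf{v}\in V$ we get $\psi(\mathbf{w})\cdot\psi(\mathbf{v})=0$, that is, $\psi(V^{\perp_u})\le\psi(V)^{\perp_n}$. Second, every vector of $U^{\perp_n}$ has zero first $u$ coordinates, so it is orthogonal to all of $U\supseteq\psi(V)$. Together these give $\psi(V^{\perp_u})\oplus U^{\perp_n}\le\psi(V)^{\perp_n}$.

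For the reverse inclusion, the cleanest route is a dimension count. Since $\psi$ is injective and the standard form is non-degenerate on both $\F_q^u$ and $\F_q^n$,
\[
\dim\psi(V)^{\perp_n}=n-\dim V.
\]
On the other side,
\[
\dim\bigl(\psi(V^{\perp_u})\oplus U^{\perp_n}\bigr)=(u-\dim V)+(n-u)=n-\dim V.
\]
The two dimensions agree, so the inclusion is an equality. A direct alternative is to split $\mathbf{x}\in\psi(V)^{\perp_n}$ as $\mathbf{x}=\psi(\mathbf{x}')+\mathbf{x}''$, where $\mathbf{x}'$ is the first $u$ coordinates and $\mathbf{x}''\in U^{\perp_n}$. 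Then $\mathbf{x}'\cdot\mathbf{v}=\mathbf{x}\cdot\psi(\mathbf{v})=0$ for all $\mathbf{v}\in V$, so $\mathbf{x}'\in V^{\perp_u}$.

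The only delicate point is that over $\F_q$ the standard form can be isotropic, so $W\cap W^\perp$ need not be trivial in general. This does not affect the argument. Non-degeneracy alone gives $\dim W^\perp=\dim E-\dim W$, and directness of the sum only uses $U\cap U^{\perp_n}=\{\mathbf{0}\}$, which is evident from the coordinate description. There is no real obstacle beyond this bookkeeping.
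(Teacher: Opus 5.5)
Your proposal is correct and follows essentially the same argument as the paper: the inclusion $\psi(V^{\perp_u})+U^{\perp_n}\le\psi(V)^{\perp_n}$ comes from $AA^t=I_u$ (inner-product preservation) together with $\psi(V)\le U$, equality follows from the dimension count $(u-\dim V)+(n-u)=n-\dim V$, and directness from $\psi(V^{\perp_u})\cap U^{\perp_n}\subseteq U\cap U^{\perp_n}=\{\mathbf{0}\}$. Your coordinate-splitting argument for the reverse inclusion is a valid alternative to the dimension count that the paper does not give.
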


\begin{proof}
By definition, the map $\psi$ embeds a vector 
$\mathbf v = (v_1, \dots, v_u) \in \F_q^u$ into $\F_q^n$ by adding $n-u$ zeros coordinates. 
Hence, $U = \psi(\F_q^u) = \operatorname{rowsp}(A)$.\\
Firstly, we shall prove that $\psi(V^{\perp_u}) + U^{\perp_n} \subseteq \psi(V)^{\perp_n}$. Since $\psi(V) \subseteq U$, we immediately have 
$U^{\perp_n} \subseteq \psi(V)^{\perp_n}$.  

Next, let $\mathbf x \in \psi(V^{\perp_u})$. Then there exists 
$\mathbf z \in V^{\perp_u}$ such that $\mathbf x  = \mathbf z A$.  
Similarly, for any $\mathbf y \in \psi(V)$, $\mathbf y = \psi(\mathbf v) = \mathbf v A$ with $\mathbf v \in V$.  
Then
\[
\mathbf x \cdot \mathbf y = (\mathbf z A) \cdot (\mathbf v A) = \mathbf z A A^t \mathbf v^t = \mathbf z \mathbf v^t = \mathbf z \cdot \mathbf v = 0,
\]
since $\mathbf z \in V^{\perp_u}$.  
Hence, $\mathbf x \in \psi(V)^{\perp_n}$ and therefore  $\psi(V^{\perp_u}) + U^{\perp_n} \subseteq \psi(V)^{\perp_n}.$
Since $\psi$ is injective, we have
\begin{equation*}
\dim \psi(V^{\perp_u}) + \dim U^{\perp_n} = (u - \dim V) + (n - u) = n - \dim V = \dim \psi(V)^{\perp_n}.
\end{equation*}
Finally, since $\psi(V^{\perp_u}) \cap U^{\perp_n}  \subseteq U \cap U^{\perp_n}= \{\mathbf{0}\}$, the sum is direct and  this concludes the proof.
\end{proof}

    Let $\cC \subset \cX_{n,q}$. Then,
 for each $u \in \{1,\ldots,n-1\}$, we denote by
    $\cC^{[u]} \subset \cX_{u,q}$ the code obtained from $\cC$ by deleting the last $n-u$ rows and the last $n-u$ columns from each element of $\cC$. Note that $\cC^{[u]}=A \cC A^t$ where $A=\left ( I_u \,\, O_{u \times (n-u)}  \right )$.

\begin{proposition} \label{prop:punctured}
    Let $\cC \subset \cX_{n,q}$ be an $\F_q$-linear code and $u \in \{1,2,\ldots,n-1\}$. Then
    $$\mathcal{M}[\cC^{[u]}] \cong 
         \mathcal{M}_c[\cC A^t]|_U $$
    where $A= \left ( I_u \,\, O_{u \times (n-u)} \right )$ and $U=\rowsp(A)$.
\end{proposition}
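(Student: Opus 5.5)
The plan is to compare the two rank functions directly through the embedding $\psi$ of Lemma~\ref{orthogonal}. The lattice $\mathscr{L}(\F_q^u)$, on which $\mathcal{M}[\cC^{[u]}]$ lives, is identified with the interval $[\mathbf{0},U]$ of $\mathscr{L}(\F_q^n)$ via $V\mapsto \psi(V)$. The restriction $\mathcal{M}_c[\cC A^t]|_U$ lives on this interval, with rank function $\rho'(\psi(V))=\dim(\cC A^t)-\dim (\cC A^t)(\psi(V)^{\perp_n},c)$. Writing $\rho_u$ for the rank function of $\mathcal{M}[\cC^{[u]}]$, we have $\rho_u(V)=\dim \cC^{[u]}-\dim \cC^{[u]}(V^{\perp_u},c)$. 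So it suffices to show two things: that $\dim \cC^{[u]}=\dim \cC A^t$, and that $\dim \cC^{[u]}(V^{\perp_u},c)=\dim (\cC A^t)(\psi(V)^{\perp_n},c)$ for all $V\le \F_q^u$. (For the Hermitian case the same argument runs over $\F_{q^2}$, with the appropriate conjugate transpose.)

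\textbf{Step 1.} I will compare the two codes. For $M\in\cC$, the matrix $MA^t\in \F_q^{n\times u}$ is made of the first $u$ columns of $M$, while $AMA^t$ is its top $u\times u$ block. The map $AX\colon \cC A^t\to \cC^{[u]}$, $X\mapsto AX$, is an $\F_q$-linear surjection. It is not obviously injective, since a matrix $MA^t$ could be nonzero only in its last $n-u$ rows. I would therefore handle the dimension count together with Step 2 rather than separately. The key observation is the following. For $X=MA^t$, Lemma~\ref{orthogonal} gives $\psi(V)^{\perp_n}=\psi(V^{\perp_u})\oplus U^{\perp_n}$. Since $U^{\perp_n}=\langle \ee_{u+1},\dots,\ee_n\rangle$, the condition $\colsp(X)\le \psi(V)^{\perp_n}$ says exactly that the top $u$ rows of $X$, namely $AX$, have column space inside $V^{\perp_u}$, with no condition on the bottom rows.

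\textbf{Step 2.} The map $X\mapsto AX$ therefore sends $(\cC A^t)(\psi(V)^{\perp_n},c)$ onto $\cC^{[u]}(V^{\perp_u},c)$, and its kernel is the same subspace $K=\{MA^t: AMA^t=0\}$ independently of $V$. Indeed, $K$ lies in every $(\cC A^t)(\psi(V)^{\perp_n},c)$ because its elements have column space in $U^{\perp_n}$. Hence
\[
\dim (\cC A^t)(\psi(V)^{\perp_n},c)=\dim \cC^{[u]}(V^{\perp_u},c)+\dim K ,
\]
and, taking $V=\mathbf{0}$, $\dim \cC A^t=\dim\cC^{[u]}+\dim K$. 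Subtracting these two identities, $\dim K$ cancels and $\rho'(\psi(V))=\rho_u(V)$.

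\textbf{Step 3.} Finally, $\psi$ is a lattice isomorphism $\mathscr{L}(\F_q^u)\to[\mathbf{0},U]$, which gives the claimed equivalence $\cong$ (after identifying $U$ with $\F_q^u$ via $\psi$).

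The main obstacle I anticipate is the possible non-injectivity of $X\mapsto AX$. The point of the proof is that this defect is uniform in $V$, thanks to the direct sum decomposition in Lemma~\ref{orthogonal}, and so it cancels in the difference defining the rank function.
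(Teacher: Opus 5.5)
Your proof is correct and is essentially the paper's argument. The paper also uses the surjection $CA^t\mapsto ACA^t$ and the fact that its kernel is $(\cC A^t)(U^{\perp_n},c)$ (your $K$), which by Lemma~\ref{orthogonal} lies in every $(\cC A^t)(\psi(V)^{\perp_n},c)$. So the kernel's dimension cancels in the rank function, and the result is transported along the lattice isomorphism $V\mapsto\psi(V)$.
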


\begin{proof} Let us recall that $\mathcal{M}_c[\cC]= ( \mathscr{L}(E), \rho_c)$ and  $\mathcal{M}_c[\cC A^t]|_U =(\mathscr{L}(U),\rho')$ where $\rho'(T)=\dim (\cC A^t)-\dim(\cC A^t)(T^{\perp_n},c)$ for any $T \leq U$. Let us consider the $(q^\ell,u)$-polymatroid $\mathcal{M}_c[\cC^{[u]}]=(\mathscr{L}(\F_{q^\ell}^{u}),\tilde{\rho}_c)$. By definition, we have 
$$\tilde{\rho}_c(V) = \dim\cC^{[u]}-\dim\cC^{[u]}(V^{\perp_u}),$$
for all $V \leq \F_{q^\ell}^u$.

Consider the map $ \phi:  CA^t  \in \cC A^t \longmapsto  ACA^t \in \cC^{[u]}$. It is straightforward to see that $\cC^{[u]}$ is isomorphic to $(\cC A^t) /(\cC A^t)(U^{\perp_n},c)$ and so that $\dim\cC^{[u]}=\dim(\cC A^t)-\dim(\cC A^t)(U^{\perp_n},c)$.\\
Now, let $V$ be an $\F_{q^\ell}$-subspace of $\F_{q^\ell}^u$ and set $W=\{ \mathbf{v}A : \mathbf{v} \in V \}$, then
\begin{equation}
\begin{aligned}
\cC^{[u]}(V^\perp)&=\{ A C A^t \in \cC^{[u]} \colon \mathbf{v} A C A^t= \mathbf{0},\, \forall \mathbf{v} \in V \}\\
&=\phi( \{ C A^t \in \cC A^t \colon \mathbf{w} C A^t= \mathbf{0}, \forall \mathbf{w} \in W\})=\phi((\cC A^t)(W^{\perp_n},c)).
\end{aligned}
\end{equation}
By Lemma \ref{orthogonal}, $W^{\perp_n}= \psi(V^{\perp_u}) \oplus U^{\perp_n}$ and hence $ (\cC A^t)(U^{\perp_n},c) \subset (\cC A^t) (W^{\perp_n}, c)$. Then, $ (\cC A^t)(W^{\perp_n},c) / \cC A^t(U^{\perp_n},c) \cong  \cC^{[u]}(V^{\perp_u}) $ and
\begin{equation}
\begin{aligned}
    \tilde{\rho}_c(V)& =\dim \cC^{[u]} - \dim \cC^{[u]} (V^{\perp_u})=\dim (\cC A^t) - \dim (\cC A^t)(U^{\perp_n},c)\\
    &-\dim ((\cC A^t)(W^{\perp_n}, c)) + \dim (\cC A^t)(U^\perp,c)\\
    &= \rho'(W).
    \end{aligned}
\end{equation}
    Clearly, there is a lattice isomorphism between the subspace lattice $\mathscr{L}(\F_{q^{\ell}}^u)$ and $\mathscr{L}(U)$ via $V \mapsto \{\mathbf{v}A : \mathbf{v} \in V\}$. Hence, the result follows.   
\end{proof}

\begin{remark}
 The interaction of the subspaces $\cX_{n,q}(U)$ with an $\Fq$-linear code $\cC\subset \cX_{n,q}$ yields invariants similar to the generalized weights of unrestricted rank-metric codes, as defined in \cite{ravanticode}.
%
%
    Let $\cC \subset \cX_{n,q}$ be an $\F_q$-linear code of dimension $k$. For each $1 \leq j \leq n$ we define the $j$-th $\cX_{n,q}$-weight of $\cC$ to be
$$d_j(\cC,\cX_{n,q})= \min\{ \dim_{\Fq}(\cX_{n,q}(U)): U \in \mathscr{L}(E), \dim(\cC(U)) \geq j\}.$$

It is easy to see that the $d_j(\cC,\cX_{n,q})$ form a non-decreasing sequence.
By Proposition \ref{trivial}, if $\textup{d}_{\rk}(\cC)=d$, then 
$d_1(\cC,\cX_{n,q})$ is bounded from below by $\binom{d}{2}$, $\binom{d+1}{2}$, or $d^2$ for $\cX_{n,q}$ equal to $\Alt_{n,q}$, $\Sym_{n,q}$, or $\Her_{n,q}$ respectively.
\end{remark}

\subsection{\texorpdfstring{$q$}{q}-Polymatroids of alternating codes}

In this subsection, we will derive some results about the $q$-polymatroid of an alternating $d$-code.

\begin{proposition}
Let $\cC \subset \Alt_{n,q}$, $n \geq 3$, be an alternating $\F_q$-linear $d$-code, $ 2 \leq d < n$. 
Then $\mathcal{M}[\cC]$ is fully determined by the parameters $n,d$ in the following cases:
\begin{enumerate}[(a)]
    \item $n \geq 6$ even and $d=n-2$; in which case we have

        \begin{equation*}
    \rho(U)= \begin{cases}
        \dim \cC & \textnormal{ if } u > 2, \\
       \frac{u(2n-u-1)}{2} & \text{ otherwise }       
    \end{cases}.
    \end{equation*}
    
    \item $n \geq 5$ odd and $d=n-1$; in which case we have 
\begin{equation*}
\rho(U)= 
\begin{cases}
\dim \cC & \text{if } u \ge 2\\
u(n-1) & \text{otherwise}
\end{cases}.
\end{equation*}
\end{enumerate}
\end{proposition}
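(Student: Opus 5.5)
The plan is to read off both cases from the general description \eqref{rk-f-restricted}, which follows from Proposition~\ref{polymatroid} and Proposition~\ref{maxrank}. For an $\F_q$-linear $d$-code $\cC\subset\Alt_{n,q}$ it gives $\rho(U)=\dim\cC$ whenever $u>n-d$. It gives $\rho(U)=\dim\Alt_{n,q}(U^\perp)^*$ whenever $\min\{2u,\maxrk \Alt_{n,q}\}<d^*$. Since $\dim U^\perp=n-u$, formula \eqref{anticode-dual} applied with $n-u$ in place of $u$ yields
\[
\dim \Alt_{n,q}(U^\perp)^*=\frac{u(2n-u-1)}{2}.
\]
So the whole proof reduces to two things. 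First, the two ranges $u>n-d$ and $2u<d^*$ must together cover every $u\in\{0,\dots,n\}$. Second, $d^*$ must be known exactly in terms of $n$ and $d$.

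The key step, and the one I expect to be the real obstacle, is pinning down $d^*$. Lemma~\ref{bound-dual-dist} only gives the upper bound $d^*\le 2\lfloor n/2\rfloor-d+4$, and an upper bound is useless here because we need $2u<d^*$. For an arbitrary, small $d$-code, $\dim\cC(U^\perp)$ is genuinely not controlled by $n$ and $d$ alone. For example, when $d=n-2$ and $u=2$, $\cC(U^\perp)$ is a linear code of full-rank alternating matrices on an $(n-2)$-dimensional space, and its dimension can vary. I would therefore read ``fully determined by the parameters'' as referring to maximum $d$-codes, i.e. those attaining \eqref{alt-bound}, and make this explicit.

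Under that reading I invoke \cite[Theorem 5]{delsarte1975}, quoted after Lemma~\ref{bound-dual-dist}. For $2<d<n$, a maximum additive $d$-code has a maximum dual with $d^*=2\lfloor n/2\rfloor-d+4$. The hypotheses $n\ge 6$ in (a) and $n\ge5$ in (b) are exactly what is needed to ensure $2<d$.

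Case (a): $n$ even and $d=n-2$, so $d^*=n-(n-2)+4=6$.
\begin{itemize}
\item For $u>n-d=2$ we get $\rho(U)=\dim\cC$.
\item For $u\le 2$ we have $\min\{2u,n\}=2u\le4<6=d^*$, so $\rho(U)=u(2n-u-1)/2$.
\item As a consistency check, at $u=2$ this gives $2n-3$, while a maximum code has $\dim\cC=2(n-1)$. Hence $\dim\cC(U^\perp)=1$, which is compatible with the Quinlan--Gow bound.
\end{itemize}

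Case (b): $n$ odd and $d=n-1$, so $d^*=(n-1)-(n-1)+4=4$.
\begin{itemize}
\item For $u\ge2>n-d=1$ we get $\rho(U)=\dim\cC$.
\item For $u\le1$ we have $2u\le2<4=d^*$, so $\rho(U)=u(2n-u-1)/2=u(n-1)$ for $u\in\{0,1\}$.
\end{itemize}

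In both cases the two ranges exhaust all $u$. The case $u=0$ is consistent with $\rho=0$.
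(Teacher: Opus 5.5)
Your proof follows the paper's outline: apply \eqref{rk-f-restricted}, then compute $\dim\Alt_{n,q}(U^\perp)^*=u(2n-u-1)/2$ from \eqref{anticode-dual}. Where it differs is in how $d^*$ is controlled, and at that step your version is the sound one.

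The paper's proof gets the second branch by citing Lemma~\ref{bound-dual-dist}. That lemma only gives $d^*\le 2\lfloor n/2\rfloor-d+4$. The condition $\min\{2u,\maxrk\Alt_{n,q}\}<d^*$ in \eqref{rk-f-restricted} needs a \emph{lower} bound on $d^*$, so the paper's citation does not close the argument.

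You spotted this, restricted to maximum codes, and used \cite[Theorem 5]{delsarte1975} to get $d^*=6$ in (a) and $d^*=4$ in (b). You also correctly note that $n\ge 6$ and $n\ge 5$ are exactly what force $d>2$, so that theorem applies.

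The maximality hypothesis you add is not a weakness of your proof; it corrects the statement, which is false without it. If $\cC$ is spanned by a single alternating matrix of rank $d$, then $\rho(U)\le\dim\cC=1$ for every $U$. The statement, however, predicts $\rho(U)=n-1$ for $u=1$ in both cases.

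In fact every non-maximum code violates the formula:
\begin{itemize}
\item In (b), $\dim\cC\le n-1$. The column space of a nonzero codeword is a hyperplane $H$, so $U=H^\perp$ gives $\rho(U)\le n-2$.
\item In (a), $\dim\cC\le 2n-3$. A codeword of rank exactly $n-2$ exists, so taking $U$ to be the orthogonal complement of its column space gives $u=2$ and $\rho(U)\le 2n-4<2n-3$.
\end{itemize}

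So the paper, read literally, asserts the formula for all linear $d$-codes using an inequality in the wrong direction. Your version proves it for maximum codes, which are the only codes for which it holds.
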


\begin{proof} Let $U$ be an $u$-dimensional subspace of $\F_{q}^n$.
Case $(a)$ directly follows from \eqref{rk-f-restricted}, and by taking into account Lemma \ref{bound-dual-dist}.
    Indeed, if $n \geq 6$ and $d= n-2$,
    \begin{equation}
    \rho(U)= \begin{cases}
        \dim \cC & \textnormal{ if } u > 2 \\
        \dim \Alt_{n,q}(U^\perp)^* & \textnormal{ if } u \leq 2        
    \end{cases}.
    \end{equation}
The same can be said for Case $(b)$ when $n\geq5$. Again, by \eqref{rk-f-restricted}
 and Lemma \ref{bound-dual-dist}
     \begin{equation}
    \rho(U)= \begin{cases}
        \dim \cC & \textnormal{ if } u > 1 \\
        \dim \Alt_{n,q}(U^\perp)^* & \textnormal{ if } u=1        
    \end{cases},
    \end{equation}
   and taking into account \eqref{anticode-dual}, the result follows.
\end{proof}

Let $n=2e+3$ and now consider the maximum rank distance code $\mathcal{A}_{n,2e,s}$ in class \eqref{eq:alternatingcode}.  We have the following

\begin{theorem}  \label{alternating-missing}
    Let $\mathcal{M}[\mathcal{A}_{n,2e,s}] = (\mathscr{L}(E),\rho)$ be the $q$-polymatroid associated with the $2e$-code $\mathcal{A}_{n,2e,s},$ with $n=2e+3$.
    Then, for any $u$-dimensional subspace $U \leq  E$, we have 
    \begin{equation*}
            \rho(U)=\left\{
            \begin{array}{cl}
                2n                           &  \textit{if }   u >3, \\
               \frac{u(2n-u-1)}{2} & \textit{if}\,\, u< 3 ,
            \end{array}
            \right .
        \end{equation*}
and $\rho(U) \in \{2n,2n-1\}$, if $u=3$.
\end{theorem}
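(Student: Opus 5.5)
Throughout I take $e\geq 2$, so that $2<d=2e<n$ and the remark after Lemma~\ref{bound-dual-dist} applies. In the range $u\neq 3$ I will simply feed the parameters into \eqref{rk-f-restricted}; for $u=3$ I will use the polynomial description \eqref{eq:alternatingcode}.

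\textbf{Parameters.} With $n=2e+3$ the free coefficients of $\mathcal{A}_{n,2e,s}$ are $b_e,b_{e+1}\in\F_{q^n}$, so $\dim\mathcal{A}_{n,2e,s}=2n$. The code is maximum with $2<d=2e<n$, so by the remark following Lemma~\ref{bound-dual-dist} its dual is maximum with
\[
d^*=2\left\lfloor \tfrac n2\right\rfloor-2e+4=6 .
\]

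\textbf{The cases $u\neq 3$.} If $u>3=n-d$, the first line of \eqref{rk-f-restricted} gives $\rho(U)=2n$. If $u\leq 2$, then $\min\{2u,\maxrk\Alt_{n,q}\}\leq 4<6=d^*$, so the second line applies. Then \eqref{anticode-dual}, applied to the $(n-u)$-dimensional space $U^\perp$, gives
\[
\rho(U)=\dim\Alt_{n,q}(U^\perp)^*=\frac{u(2n-u-1)}{2}.
\]

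\textbf{The case $u=3$.} Write $W=U^\perp$, so $\dim W=2e$ and $\rho(U)=2n-\dim\cC(W)$, where $\cC=\mathcal{A}_{n,2e,s}$. The bound $\rho(U)\leq 2n$ is automatic, so it suffices to show $\dim_{\F_q}\cC(W)\leq 1$. Every nonzero $f\in\cC(W)$ has rank at most $2e=d$, hence rank exactly $2e$, and therefore $\ker f$ is a fixed $3$-dimensional subspace $K$ determined by $W$ (in the polynomial model the image lies in $W$, and by the alternating symmetry the kernel contains a fixed complement-type space of dimension $3$).

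The exponents occurring in $f$ are $q^{si}$ with $i\in\{e,e+1,e+2,e+3\}$. Hence $f=h\circ x^{q^{se}}$ with $h$ a $q^s$-polynomial of $q^s$-degree at most $3$. Since $h$ has root space exactly $K^{q^{se}}$ of dimension $3$, it is forced that
\[
h=a\,L,
\]
where $L$ is the monic subspace polynomial of $K^{q^{se}}$ and $a\in\F_{q^n}$. So $\cC(W)$ embeds in the set of $a$ for which $aL\in\cC$.

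The defining relations of \eqref{eq:alternatingcode} link the extreme coefficients: the coefficient of $x^{q^{s(e+3)}}$ must equal $-b_e^{q^{s(e+3)}}$, where $b_e=a\,c_0$ and $c_0=L(1)$-type constant term. This gives a semilinear equation
\[
a=-(c_0a)^{q^{s(e+3)}} .
\]
A similar relation comes from the middle pair $b_{e+1}$ and $-b_{e+1}^{q^{s(e+2)}}$. Each such equation is a $q^t$-polynomial of degree one in $a$, so its solution set has $\F_q$-dimension at most $\gcd(t,n)$.

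\textbf{Main obstacle.} Bounding this solution space by $1$ is the step I expect to be hardest. We have $\gcd(s(e+3),n)=\gcd(e+3,3)$, which can equal $3$, whereas $\gcd(s(e+2),n)=\gcd(e+2,2e+3)=\gcd(e+2,1)=1$. So I will use the middle relation, which bounds the $\F_q$-dimension of admissible $a$ by $1$, and keep the outer relation only as a consistency check.

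\textbf{Fallback.} If the reduction $f=aL\circ x^{q^{se}}$ turns out not to be compatible with how $K$ depends on $W$, I will instead use the counting identity obtained from \eqref{sizeformula},
\[
\dim\cC(W)=6-n+\dim\bigl(\cC^*\cap\Alt_{n,q}(W)^*\bigr).
\]
By Proposition~\ref{maxrank}, every element of $\Alt_{n,q}(W)^*$ has rank at most $6=d^*$. I would then try to show that the rank-$6$ dual codewords supported there span at most an $(n-5)$-dimensional space.
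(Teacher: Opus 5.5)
\paragraph{Verdict.} Your treatment of $u\neq 3$ coincides with the paper's: \eqref{rk-f-restricted} with $d^*=6$, followed by \eqref{anticode-dual}. For $u=3$ your reduction is essentially the paper's route as well. You bring the code to $q^s$-degree at most $3$: you write $f=h\circ X^{q^{se}}$, whereas the paper composes on the left with $X^{q^{s(e+3)}}$. You then use that a $3$-dimensional root space determines the polynomial up to an $\F_{q^n}$-scalar, and count which multiples stay in the code. However, the $u=3$ argument has a genuine gap, and in one case the claimed bound is false.

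\paragraph{The failing step.} You claim that the middle relation $ac_2=-(ac_1)^{q^{s(e+2)}}$ confines $a$ to an $\F_q$-space of dimension $1$. This holds only when the relation is a genuine binomial equation.
\begin{itemize}
\item If exactly one of $c_1,c_2$ vanishes, the relation forces $a=0$, which is fine.
\item If $c_1=c_2=0$, i.e.\ $L=X^{q^{3s}}+c_0X$, the relation reads $0=0$.
\end{itemize}
In the second case only the outer relation remains, and its solution space has $\F_q$-dimension up to $\gcd(s(e+3),n)=\gcd(e,3)$. So the relation you demoted to a ``consistency check'' is the one that actually has to do the work.

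\paragraph{The statement fails when $3\mid e$.} A binomial $X^{q^{3s}}+c_0X$ has root space $\{0\}$ or $\xi\F_{q^{\gcd(3s,n)}}$. Its root space is therefore $3$-dimensional only when $3\mid n$, i.e.\ $3\mid e$, and in that case the bound $\dim\cC(W)\le 1$ is false. Concretely, take $b_e=1$ and $b_{e+1}=0$.
\begin{itemize}
\item Then $f=X^{q^{se}}-X^{q^{s(e+3)}}\in\mathcal{A}_{n,2e,s}$ and $\ker f=\F_{q^3}$.
\item Since $\lambda^{q^{s(e+3)}}=\lambda$ for $\lambda\in\F_{q^3}$, every $\lambda f=\lambda X^{q^{se}}-(\lambda X)^{q^{s(e+3)}}$ is again a codeword.
\end{itemize}
Hence the codewords vanishing on $U=\F_{q^3}$ form a $3$-dimensional $\F_q$-space, so $\rho(U)=2n-3$. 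The smallest instance is $n=9$, $e=3$.

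The paper's proof has the same blind spot. Its assertion $\langle f\rangle_{\F_{q^n}}\cap\mathcal{A}'=\langle f\rangle_{\F_q}\cap\mathcal{A}'$ fails for $f=X-X^{q^{3s}}\in\mathcal{A}'$, because $\F_{q^3}f\subseteq\mathcal{A}'$ when $3\mid e$. Your fallback cannot rescue this case either, since it would have to establish the same false bound.

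\paragraph{What your argument does prove.} When $3\nmid e$, the argument goes through once you rely on the outer relation.
\begin{itemize}
\item First, $c_0\neq 0$: otherwise $L=M\circ X^{q^s}$ with $\deg_{q^s}M=2$, which has at most a $2$-dimensional kernel.
\item So $a=-(c_0a)^{q^{s(e+3)}}$ is a genuine binomial equation. Its solutions form an $\F_q$-space of dimension at most $\gcd(e,3)=1$.
\end{itemize}
A binomial $L$ with $3$-dimensional kernel is in any case impossible when $3\nmid n$. Thus your approach, like the paper's, establishes the statement exactly under the extra hypothesis $3\nmid e$. The case $e=1$ is trivial, since $\mathcal{A}_{5,2,s}=\Alt_{5,q}$.
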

 \begin{proof}Firstly note that, up to isomorphism, any $\F_q$-subspace of $\F^n_{q}$ can be seen as an $\F_{q}$-subspace of $\F_{q^{n}}$. Then, let $V\leq \F_{q^{n }}$ be an $\F_{q}$-subspace of $\F_{q^n}$ and consider the $\F_{q}$-linear map
\begin{equation}\label{restriction}
\phi_V : f \in \tilde{\mathcal{L}}_{n,q,s}[X] \longrightarrow f_{|_V}(x) \in \mathrm{Hom}(V,\F_{q^n})
\end{equation}
    where $f_{|_V}(x)$ denotes the restriction of $f(x)$ to the subspace $V$. Since $(\ker f)^\perp=\text{im}f^{\top},$ it is easy to see that, if $\cC \subset \tilde{\mathcal{L}}_{n,q,s}[X]$, the rank function $\rho_c$ of the $q$-polymatroid $\mathcal{M}[\cC]$ can be written as $\rho(V) = \dim_{\F_q} \phi_V(\cC)$. By \eqref{rk-f-restricted}, Proposition \ref{polymatroid}  and \cite[Theorem 5]{delsarte1975}, the value of the rank function $\rho$ is determined for every $u$-dimensional subspace of $\F_{q^n}$ with $u \neq 3$.\\
Let $\mathcal{A}' =X^{q^{s(e+3)}} \circ \mathcal{A}_{2e+3,2e,s} $, which is given by:
    \begin{equation*}
      \mathcal{A}' = \{ a^{q^{se}} X - a X^{q^{3s}} + b^{q^{s(e+1)}} X^{q^s} - b X^{q^{2s}} : a,b \in \fqn\}.
      \end{equation*}
Observe that $\mathcal{M}[\mathcal{A}']$ is equivalent to $\mathcal{M}[\mathcal{A}_{2e+3,2e,s}]$, see \cite[Proposition 6.7]{gorla2019rank}.
Let $U$ be a $3$-dimensional $\F_q$-subspace of $\F_{q^n}$. For any $f \in \tilde{\mathcal{L}}_{n,q,s}[X]$, we have that $f \in \ker \phi_U$ if and only if $U \leq \ker f$. Since the $q^s$-degree of $f \in \mathcal{A}'$ is at most $3$, $\dim (\ker f) \leq 3$. Therefore, since $u=3$, $f \in \ker \phi_U$ if and only if $\ker f = U$.
    Furthermore, if $f_1,f_2 \in \tilde{\mathcal{L}}_{n,q,s}[X]$ both have $q^s$-degree $3$ and satisfy $U \leq \ker f_1 \cap \ker f_2$, then 
    $\langle f_1 \rangle_{\fqn} = \langle f_2 \rangle_{\fqn}$, since otherwise there would exist
 $a_1,a_2 \in \fqn$ such that $a_1f_1+a_2f_2$ has $q^s$-degree at most 2, which brings us to a contradiction. Since 
    $\langle f \rangle_{\fqn} \cap \mathcal{A}' = \langle f \rangle_{\Fq} \cap \mathcal{A}' $, it follows that
    $\mathcal{A}' \cap \ker \phi_U$ has $\Fq$-dimension at most $1$.
    Hence, we have that 
    $$\rho(U) =  \dim \phi_U(\mathcal{A}')=\dim \mathcal{A}'-\dim (\ker \phi_U \cap \mathcal{A}') \in \{2n, 2n-1\}.$$
    This leads to the result.
\end{proof}

\subsection{\texorpdfstring{$q$}{q}-Polymatroids of symmetric codes}

Following the same approach as above, we derive some results on the 
$q$-polymatroids associated with symmetric $d$-codes.

\begin{proposition}
    Let $\cC \subset \Sym_{n,q}$, be a symmetric $\F_q$-linear $d$-code with $1 < d < n$.  Then $\mathcal{M}[\cC]$ is fully determined by the parameters $n,d$, in the following cases:
    \begin{itemize}
        \item [$a)$] $n \geq 3$ and  $d=n-1$;
        \item [$b)$] $n \geq 5$ odd and $d=n-2$.
    \end{itemize}
\end{proposition}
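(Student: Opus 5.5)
The plan is to run the same argument as in the alternating case, now based on \eqref{rk-f-restricted}. For symmetric codes $\maxrk \Sym_{n,q}=n$, and \eqref{anticode-dual} with $\dim U^\perp=n-u$ gives
$$\dim \Sym_{n,q}(U^\perp)^*=\frac{u(2n-u+1)}{2}.$$
So \eqref{rk-f-restricted} reads: $\rho(U)=\dim\cC$ if $u>n-d$, and $\rho(U)=\frac{u(2n-u+1)}{2}$ whenever $\min\{2u,n\}<d^*$. The task is to check, in each case, that every $u$ falls under one of these two alternatives. That reduces everything to a lower bound on $d^*$. Lemmas \ref{dual-d-odd-sym} and \ref{dual-sym-even} only give upper bounds on $d^*$, so the lower bound has to come from elsewhere.

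In case $a)$, $d=n-1$, so $u>n-d=1$ already gives $\rho(U)=\dim\cC$ for all $u\ge 2$, and $u=0$ is trivial. For $u=1$, Proposition \ref{maxrank} (with $\dim U^\perp=n-1\ge\lceil n/2\rceil$) gives $\maxrk \Sym_{n,q}(U^\perp)^*=2$. By \eqref{sizeformula} it then suffices to show that $\cC^*$ has no nonzero element of rank at most $2$, i.e.\ $d^*\ge 3$. Then $\rho(U)=n$ for every one-dimensional $U$, and $\rho$ is
$$\rho(U)=\begin{cases}\dim\cC & u\ge 2,\\ n & u=1.\end{cases}$$

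In case $b)$, $n$ is odd and $d=n-2$ is odd with $\delta=(n-1)/2$, so $u>2$ gives $\rho(U)=\dim\cC$. For $u=1,2$ we need $\min\{2u,n\}<d^*$, i.e.\ $d^*\ge 5$ (here $n\ge5$ guarantees $\min\{4,n\}=4$). Lemma \ref{dual-d-odd-sym} gives $d^*\le n-d+3=5$, and equality holds when $\cC$ is maximum. In that case $\rho(U)=n$ for $u=1$, $\rho(U)=2n-1$ for $u=2$, and $\rho(U)=\dim\cC=n\bigl(\frac{n+1}{2}-\delta+1\bigr)=2n$ for $u\ge3$. So the strategy is to show that the hypotheses force $d^*\ge5$, using the equality criterion quoted after \eqref{eq:sym_bound_d_odd} (equality iff $d^*\ge 2t+3$ with $t=\lfloor (n+1)/2\rfloor-\delta=1$).

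The main obstacle is this lower bound on $d^*$ in both cases: $d^*\ge3$ in $a)$ and $d^*\ge5$ in $b)$. I will first try to obtain it from the Schmidt equality criteria, showing that $\dim\cC$ attains the relevant bound \eqref{eq:sym_bound_d_odd} or \eqref{eq:sym_bound_d_even}. If $\cC$ is not maximum, I will try instead to control $\cC(U^\perp)$ directly for $u\le 2$. Such a subcode sits, via Lemma \ref{lem:equiv}, inside a copy of $\Sym_{n-u,q}$ and has minimum distance at least $d\ge n-u-1$. The Schmidt bounds applied in $\Sym_{n-u,q}$ then cap $\dim\cC(U^\perp)$, and combining this with $\rho(U)\le \dim\Sym_{n,q}(U^\perp)^*$ from \eqref{sizeformula} should pin down $\rho(U)$. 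If neither route closes, the statement will have to be read for maximum codes, where Lemma \ref{dual-d-odd-sym} supplies $d^*$ directly.
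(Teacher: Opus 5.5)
Your reduction through \eqref{rk-f-restricted} is correct, and so is your diagnosis. Everything hinges on a \emph{lower} bound on $d^*$: $d^*\ge 3$ in a) and $d^*\ge 5$ in b). Lemmas \ref{dual-d-odd-sym} and \ref{dual-sym-even} only bound $d^*$ from above. But the proposal never supplies that lower bound, and under the stated hypotheses it cannot, because the statement is false for non-maximum codes. Both cases assert $\rho(U)=n$ for $u=1$, while always $\rho(U)\le\dim\cC$. So the claim fails whenever $\dim\cC<n$. For instance, if $\cC$ is spanned by a single symmetric matrix of rank $n-1$ (resp.\ $n-2$), then $\rho(U)\in\{0,1\}$.

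Your first route (Schmidt's equality criterion) therefore needs maximality, which is not a hypothesis. Your second route (capping $\dim\cC(U^\perp)$ by Schmidt's bounds in $\Sym_{n-u,q}$) only yields inequalities, and these cannot pin down a value that genuinely depends on $\cC$ and $U$. The paper's own proof has exactly this hole. It applies \eqref{rk-f-restricted} and cites Lemmas \ref{dual-d-odd-sym} and \ref{dual-sym-even}, i.e.\ it uses upper bounds on $d^*$ where lower bounds are needed.

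Your fallback of restricting to maximum codes repairs only part of the statement. For maximum codes, the last part of Lemma \ref{dual-d-odd-sym} gives $d^*=5$ in case b). It gives $d^*=3$ in case a) with $n$ even, where $d=n-1$ is odd. In both situations your computation closes: $\rho=n,\,2n-1,\,2n$ for $u=1,\,2,\,\ge 3$ in b), and $\rho=n,\,n+1$ for $u=1,\,\ge 2$ in a).

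In case a) with $n$ odd, however, $d=n-1$ is even, so Lemma \ref{dual-d-odd-sym} does not apply and nothing gives $d^*\ge 3$. In fact the claimed formula fails there. Take $q$ odd, $\epsilon\in\F_q$ a non-square, and
$\cC=\{A=(a_{ij})\in\Sym_{3,q} : a_{13}=0,\ a_{11}-\epsilon a_{22}+a_{33}=0\}$.
\begin{itemize}
\item A rank-one element $\lambda vv^t\in\cC$ would force $v_1v_3=0$ and $v_1^2-\epsilon v_2^2+v_3^2=0$, hence $v=0$.
\item So $\cC$ is a $4$-dimensional $2$-code meeting \eqref{eq:sym_bound_d_even}, i.e.\ it is maximum.
\item $\cC(\langle\ee_2,\ee_3\rangle)$ consists of all matrices $\left(\begin{smallmatrix}0&0&0\\0&a&b\\0&b&\epsilon a\end{smallmatrix}\right)$, which has dimension $2$.
\item Hence $\rho(\langle\ee_1\rangle)=4-2=2\neq n=3$, while $\rho(\langle\ee_2\rangle)=3$.
\end{itemize}
Consistently, $\cC^*$ contains the rank-$2$ matrix with ones in positions $(1,3)$ and $(3,1)$. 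So what can actually be proved is case b), and case a) for $n$ even, under the additional hypothesis that $\cC$ is maximum.
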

\begin{proof} Let $U$ be a $u$-dimensional subspace of $\F_{q}^n$. Then, by \eqref{rk-f-restricted} we have
    \begin{equation} \label{rank-function-sym}
    \rho(U)= 
    \begin{cases}
        \dim \cC & \textnormal{if } u > n-d \\
        \dim \Sym_{n,q} (U^\perp)^* & \textnormal{if }  \min \{2u,  n  \}< d^* 
    \end{cases}     
    \end{equation}
    for any $u$-dimensional subspace $U \leq \F_{q}^n$.
The statement follows  by Lemma \ref{dual-d-odd-sym} and Lemma \ref{dual-sym-even}. Indeed for $d=n-1$, we get
\begin{equation*}
    \rho(U)=
    \begin{cases}
        \dim \cC & \textnormal{if $u \geq 2$}\\
        nu  & \textnormal{otherwise}
    \end{cases},
\end{equation*}
and this concludes Case $(a)$. Similarly, if $n$ is odd $d=n-2$, we have that
\begin{equation*}
    \rho(U)=
    \begin{cases}
        \dim \cC & \textnormal{if $u \geq 3$}\\
        \frac{u(2n-u+1)}{2}  & \textnormal{if $u \leq 2$}
    \end{cases}
\end{equation*}
and this concludes the proof.
\end{proof}

In the next result, we shall determine the $q$-polymatroid associated with  $\mathcal{S}_{n,d,s}$ described in \eqref{Schimdtcode} in the case when $n-d=2$. We will split our discussion into two cases depending on whether $n$ is even or odd.

\begin{theorem}
    Let $n,d$ be integers with $n \geq 4$ even, $n-d=2$ and consider $\mathcal{M}[\mathcal{S}_{n,d,s}]$. Then, for any non-null $u$-dimensional subspace $U$ of  $\F_q^n$, 
\begin{equation*}
    \rho(U)=
    \left\{
            \begin{array}{cl}
                2n                            &  \textit{if }   u >2 \\
               n & \textit{if}\,\, u=1 ,
            \end{array}
            \right .
        \end{equation*}
and $\rho(U) \in \{2n, 2n-1\}$ if $u=2$.
\end{theorem}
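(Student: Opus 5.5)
We follow the proof of Theorem~\ref{alternating-missing}. First record the parameters: for $n$ even and $d=n-2$ (so $d=2\delta$ with $\delta=n/2-1$), the code $\mathcal{S}_{n,d,s}$ has $b_0,b_1\in\F_{q^n}$ free, hence $\dim\mathcal{S}_{n,d,s}=2n$. This agrees with the bound \eqref{eq:sym_bound_d_even} for $n$ even, namely $n(n/2-\delta+1)=2n$.

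\textbf{The cases $u>2$ and $u=1$.} By \eqref{rk-f-restricted}, every $u$-dimensional $U$ with $u>n-d=2$ satisfies $\rho(U)=\dim\cC=2n$. For $u=1$ we use the second clause of \eqref{rk-f-restricted}, which needs $\min\{2,n\}<d^*$, i.e. $d^*\ge 3$. Since $\cC$ is maximum of even distance, we invoke Schmidt's characterisation of maximum additive symmetric codes via their duals, as the analogue of \cite[Lemma 3.6]{schmidt_symmetric_2015}. For even minimum distance this characterisation gives $d^*\geq 3$. Lemma~\ref{dual-sym-even} only supplies an upper bound, so it cannot be used here.

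If quoting that characterisation is awkward, we argue directly instead. A $1$-dimensional $U$ can be taken as $\langle\gamma\rangle_{\F_q}\le\F_{q^n}$, and then $\rho(U)=\dim_{\F_q}\phi_U(\cC)$ with $\phi_U$ the restriction map \eqref{restriction}. The kernel of $\phi_U$ on $\cC$ consists of the $f\in\cC$ vanishing at $\gamma$. After composing with $X^{q^{s(n-1)}}$ (an equivalence, by \cite[Proposition 6.7]{gorla2019rank}), every element of $\cC$ becomes a $q^s$-polynomial of $q^s$-degree at most $2$ in a three-term shape. The condition $f(\gamma)=0$ is one $\F_{q^n}$-linear-looking constraint on $(b_0,b_1)$, and we show it cuts the dimension by exactly $n$. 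The cleaner route is the second clause: $\dim\Sym_{n,q}(U^\perp)^*=\frac{(n-u)(n+u+1)}{2}$ with $n-u=n-1$ from \eqref{anticode-dual}. Note that by the formula given, $\rho$ of a $1$-space is $\dim\cC-\dim\cC(U^\perp)$, and this equals the dimension of the image of evaluation at $\gamma$, which lies in $\F_{q^n}$. Hence $\rho(U)\le n$, and it equals $n$ precisely when evaluation at $\gamma$ is surjective onto $\F_{q^n}$. We prove surjectivity by exhibiting $b_0\gamma$ with $b_1=0$: as $b_0$ ranges over $\F_{q^n}$ this already covers all of $\F_{q^n}$ since $\gamma\ne0$. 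This gives $\rho(U)=n$.

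\textbf{The case $u=2$, the main obstacle.} Let $U$ be a $2$-dimensional $\F_q$-subspace of $\F_{q^n}$. Compose with $X^{q^{s}}$ to get the equivalent code
\[
\cC'=\{\, b_1^{q^s}X+b_0^{q^s}X^{q^s}+b_1X^{q^{2s}} : b_0,b_1\in\F_{q^n}\,\},
\]
so that each nonzero element has $q^s$-degree at most $2$ and hence a kernel of dimension at most $2$. An element $f\in\ker\phi_U\cap\cC'$ satisfies $U\le\ker f$. We split on the degree of $f$. If $b_1\neq0$, then $f$ has $q^s$-degree exactly $2$ and $\ker f=U$. If $b_1=0$, then $f=b_0^{q^s}X^{q^s}$ has trivial kernel, so $f\neq0$ is impossible. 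Now take two nonzero kernel elements. Both have degree $2$ and vanish on $U$, so they are $\F_{q^n}$-proportional: otherwise a suitable $\F_{q^n}$-combination would have degree at most $1$ and a $2$-dimensional kernel, which is impossible. Finally, $\langle f\rangle_{\F_{q^n}}\cap\cC'=\langle f\rangle_{\F_q}$. To see this, compare coefficients: if $\lambda f\in\cC'$, the coefficients of $X$ and $X^{q^{2s}}$ force $\lambda b_1^{q^s}=(\lambda b_1)^{q^s}$, so $\lambda\in\F_q$. Therefore $\dim(\ker\phi_U\cap\cC')\le1$, and $\rho(U)\in\{2n,2n-1\}$.
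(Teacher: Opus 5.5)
Your proof is correct, and in the only substantive case, $u=2$, it follows the paper's argument. You replace $\mathcal{S}_{n,n-2,s}$ by $X^{q^s}\circ\mathcal{S}_{n,n-2,s}$, which changes no kernels and hence not $\rho$. You observe that nonzero elements vanishing on $U$ have $q^s$-degree exactly $2$ and kernel exactly $U$, that any two such elements are $\F_{q^n}$-proportional, and that $\gcd(s,n)=1$ forces the constant into $\F_q$.

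The genuine difference is at $u=1$. The paper disposes of every $u\neq 2$ by citing \eqref{rk-f-restricted} and Lemma~\ref{dual-sym-even}. That lemma, however, only gives the upper bound $d^*\le 4$, while the second clause of \eqref{rk-f-restricted} at $u=1$ needs $d^*\ge 3$. So that citation alone does not justify $\rho(U)=n$. Your evaluation argument fills this in without using anything about $\cC^*$: $\rho(\langle\gamma\rangle)=\dim_{\F_q}\{f(\gamma): f\in\cC\}\le n$, with equality because the summand $b_0X$ alone already sweeps out $\F_{q^n}$. Lead with it, and drop the appeal to a Schmidt-type characterisation giving $d^*\ge 3$ for even $d$; no such statement is quoted in the paper, and you do not need it.

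A few slips should be cleaned up, though none affects the conclusion.
\begin{itemize}
\item Composing with $X^{q^s}$ actually gives $\{b_1X+b_0^{q^s}X^{q^s}+b_1^{q^s}X^{q^{2s}} : b_0,b_1\in\F_{q^n}\}$, the paper's $\mathcal{S}'$. Your display swaps the two outer coefficients. The comparison $\lambda b_1^{q^s}=(\lambda b_1)^{q^s}$ that you actually write matches the correct form, and either way one gets $\lambda^{q^s}=\lambda$, hence $\lambda\in\F_{q^{\gcd(s,n)}}=\F_q$.
\item Say explicitly where $\gcd(s,n)=1$ is used: here, and in the bound $\dim\ker f\le \deg_{q^s}f$.
\item In the $u=1$ digressions, composing with $X^{q^{s(n-1)}}$ does not bring the $q^s$-degree down to $2$.
\item In \eqref{anticode-dual} the relevant subspace is $U^\perp$, of dimension $n-1$. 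The first factor is therefore $n-(n-1)=1$ and the value is $\frac{1\cdot 2n}{2}=n$, not what your substitution ``$n-u=n-1$'' produces.
\end{itemize}
Since the surjectivity argument is complete on its own, simply delete these asides.
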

\begin{proof}
 By \eqref{rk-f-restricted} and Lemma \ref{dual-sym-even},  the rank function $\rho$ is determined for any $\F_q$-subspace whose dimension is different from 2.
Let
    \begin{equation*}
       \mathcal{S}'=X^{q^s} \circ \mathcal{S}_{n,n-2,s}= \{ a X + b X^{q^s} + a^{q^s} X^{q^{2s}} : a,b \in \fqn\}.
      \end{equation*}
Note that the polynomials in $\mathcal{S}'$ have $q^s$-degree at most $2$. Then, the result follows by applying an argument similar to that used in the proof of Theorem \ref{alternating-missing} and, therefore, we omit here the details.
\end{proof}

Similarly, the following result can be established.

\begin{theorem}
    Let $n,d$ be integers with $n \geq 7$ odd, $n-d=4$ and consider $\mathcal{M}[\mathcal{S}_{n,d,s}]$. Then, for any $u$-dimensional subspace $U$ of $\F_q^n$,
\begin{equation*}
    \rho(U)=
    \left\{
            \begin{array}{cl}
                3n                            &  \textit{if }   u >4, \\
               \frac{u(2n-u+1)}{2} & \textit{if}\,\, u<4 ,
            \end{array}
            \right .
        \end{equation*}
        and $\rho(U) \in \{3n, 3n-1\}$, if $u=4$.
\end{theorem}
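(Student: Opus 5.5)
Here $n$ is odd, $d=n-4$ is odd, and $\mathcal{S}_{n,n-4,s}=\{b_0X+\sum_{i=1}^{2}(b_iX^{q^{si}}+(b_iX)^{q^{s(n-i)}})\}$ has $\F_q$-dimension $3n$. I would follow the pattern of the previous theorem and of Theorem \ref{alternating-missing}.

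\emph{Step 1: all dimensions except $u=4$.} Take $u>4=n-d$. The first line of \eqref{rk-f-restricted} gives $\rho(U)=\dim\cC=3n$. Now take $u\le 3$. Since $\mathcal{S}_{n,d,s}$ is maximum and $n$ is odd, Lemma \ref{dual-d-odd-sym} gives $d^*=n-d+3=7$. Because $n\ge 7$, we have $\min\{2u,n\}\le 6<7=d^*$. The second line of \eqref{rk-f-restricted} then gives $\rho(U)=\dim\Sym_{n,q}(U^\perp)^*$. By \eqref{anticode-dual}, with $U^\perp$ of dimension $n-u$, this equals $\frac{u(2n-u+1)}{2}$. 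The case $u=0$ is consistent, since the formula gives $0$.

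\emph{Step 2: the case $u=4$.} I would compose on the left with the permutation polynomial $X^{q^{2s}}$ and set
\[
\mathcal{S}'=X^{q^{2s}}\circ\mathcal{S}_{n,n-4,s}=\{a^{q^{2s}}X+b^{q^{2s}}X^{q^{s}}+c^{q^{2s}}X^{q^{2s}}+b^{q^{s}}X^{q^{3s}}+aX^{q^{4s}} : a,b\in\F_{q^n},\ c\in\F_{q^n}\},
\]
up to relabelling the coefficients. This uses $(b_iX)^{q^{s(n-i)}}$ composed with $X^{q^{2s}}$ giving exponent $q^{s(2-i)}$ modulo $X^{q^{sn}}-X$. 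Left composition with a bijection does not change kernels, so $\mathcal{M}[\mathcal{S}']=\mathcal{M}[\mathcal{S}_{n,d,s}]$, as in \cite[Proposition 6.7]{gorla2019rank}. Every element of $\mathcal{S}'$ has $q^s$-degree at most $4$, so its kernel has $\F_q$-dimension at most $4$.

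Now fix a $4$-dimensional $U$. As in the alternating proof, $\rho(U)=\dim\mathcal{S}'-\dim(\mathcal{S}'\cap\ker\phi_U)$. A nonzero $f$ lies in $\ker\phi_U$ exactly when $\ker f=U$, which forces $q^s$-degree exactly $4$.

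\emph{Step 3: bound the intersection.} Suppose $f_1,f_2$ both have $q^s$-degree $4$ and vanish on $U$, but are not $\F_{q^n}$-proportional. Then a suitable $\F_{q^n}$-combination of them has degree at most $3$ and vanishes on the $4$-dimensional $U$. This is impossible unless the combination is zero. So $\mathcal{S}'\cap\ker\phi_U\subseteq\langle f\rangle_{\F_{q^n}}\cap\mathcal{S}'$.

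The remaining obstacle is to show $\langle f\rangle_{\F_{q^n}}\cap\mathcal{S}'=\langle f\rangle_{\F_q}$. Suppose $\lambda f\in\mathcal{S}'$ with $f\in\mathcal{S}'$ and $\lambda\in\F_{q^n}$. Comparing the coefficients of $X$ and $X^{q^{4s}}$ gives $\lambda a^{q^{2s}}=(\lambda a)^{q^{2s}}$, that is, $\lambda^{q^{2s}}=\lambda$ when $a\neq0$. Since $a\ne0$ holds for degree-$4$ elements, $\lambda\in\F_{q^{2s}}\cap\F_{q^n}=\F_{q^{\gcd(2s,n)}}=\F_q$, because $n$ is odd and $\gcd(s,n)=1$. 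Hence the intersection has $\F_q$-dimension at most $1$, and $\rho(U)\in\{3n,3n-1\}$.
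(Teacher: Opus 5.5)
Your proof is correct and follows the paper's route. For $u\neq 4$ you use \eqref{rk-f-restricted} with $d^*=7$ from Lemma \ref{dual-d-odd-sym}; for $u=4$ you run the degree argument of Theorem \ref{alternating-missing} on $X^{q^{2s}}\circ\mathcal{S}_{n,n-4,s}$, and you also supply, via $\gcd(2s,n)=1$, the justification of $\langle f\rangle_{\F_{q^n}}\cap\mathcal{S}'=\langle f\rangle_{\F_q}$ that the paper leaves implicit.

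One harmless slip: the composed code is actually $\{aX+b^{q^s}X^{q^s}+c^{q^{2s}}X^{q^{2s}}+b^{q^{2s}}X^{q^{3s}}+a^{q^{2s}}X^{q^{4s}}\}$, so your Frobenius twists on the paired coefficients are reversed, and this is not just a relabelling. Comparing the coefficients of $X$ and $X^{q^{4s}}$ still forces $\lambda^{q^{2s}}=\lambda$, so the argument is unaffected.
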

\begin{proof}
The result follows by considering the code $\mathcal{S}'=X^{q^{2s}} \circ \mathcal{S}_{n,n-4,s}$ and applying again same techniques as in Theorem  \ref{alternating-missing}.
\end{proof}

In addition, the following statement holds.

\begin{theorem} 
    Let $n=2k$ and $k\in \{3,4,5\}$ and consider $\mathcal{M}[\mathcal{T}_{2k,s}(\eta)]$ with $\mathcal{T}_{2k,s}(\eta)$ as in \eqref{code-tangzhou}.
    Then, for any $u$-dimensional subspace $U$ of  $\F_q^n$, we have 
    \begin{equation*}
            \rho(U)=\left\{
            \begin{array}{cl}
                2n                            &  \textit{if }   u >2, \\
               \frac{u(2n-u-1)}{2} & \textit{if}\,\, u< 2,
            \end{array}
            \right .
        \end{equation*}
and $\rho(U) \in \{2n,2n-1,2n-2\}$, if $u=2$.
\end{theorem}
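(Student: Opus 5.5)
The plan follows the proofs of Theorem~\ref{alternating-missing} and of the two results on $\mathcal{S}_{n,d,s}$. Write $\cC=\mathcal{T}_{2k,s}(\eta)$. Counting free parameters gives $\dim_{\F_q}\cC = n + k + k = 2n$ ($b_1\in\F_{q^n}$, $b_0,b_2\in\F_{q^k}$), and $\cC$ is a maximum $(n-2)$-code for $k\in\{3,4,5\}$ by \cite{tangzhou}. The first branch of \eqref{rk-f-restricted} then gives $\rho(U)=\dim\cC=2n$ for every $U$ with $u>n-d=2$, which settles the case $u>2$ at once.

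For $u=1$ I would use the second branch of \eqref{rk-f-restricted}. For this I need $d^*>\min\{2,n\}=2$, i.e. that $\cC^*$ contains no nonzero symmetric matrix of rank $\le 2$; Lemma~\ref{dual-sym-even} only gives $d^*\le 4$, so this is a lower bound to prove. I would compute $\cC^*$ explicitly in the polynomial model \eqref{pol-sym}. Since $\cC$ occupies only the $q^s$-positions $k,k\pm1,k\pm2$, with subfield restrictions at $k$ and $k\pm2$, its dual is the set of symmetric $q^s$-polynomials whose coefficients in those positions lie in the appropriate trace-orthogonal complements and which are otherwise free. I would then rule out rank $\le 2$ elements of $\cC^*$ directly: such an element would be a symmetric $q^s$-polynomial of rank at most $2$. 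I would try to force its support onto too few consecutive $q^s$-positions, and hence into a polynomial of small $q^s$-degree with kernel too large, contradicting the defining conditions of $\cC^*$. If this step goes through, Proposition~\ref{polymatroid} gives $\rho(U)=\dim \Sym_{n,q}(U^\perp)^*$ for $u=1$, and \eqref{anticode-dual} evaluates it. I expect this evaluation to yield the displayed closed form, possibly after adjusting the convention in the formula.

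For $u=2$ I would follow the proof of Theorem~\ref{alternating-missing}. Replace $\cC$ by the equivalent code $\cC'=X^{q^{s(k+2)}}\circ\cC$; its elements have $q^s$-degree at most $4$, and $\mathcal{M}[\cC']\cong\mathcal{M}[\cC]$ by \cite[Proposition 6.7]{gorla2019rank}. Write $\rho(U)=\dim\phi_U(\cC')=2n-\dim(\cC'\cap\ker\phi_U)$. Let $L_U$ be the subspace polynomial of $U$, of $q^s$-degree $2$. The polynomials of $q^s$-degree $\le 4$ vanishing on $U$ are exactly $g\circ L_U$ with $\deg_{q^s}g\le 2$, an $\F_{q^n}$-space of dimension $3$. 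Moreover, every nonzero $f\in\cC'$ has rank $\ge n-2$, so $\dim\ker f\le 2$, and hence $\ker f=U$ whenever $f\in\ker\phi_U$.

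The key step, and the main obstacle, is to show $\dim_{\F_q}(\cC'\cap\ker\phi_U)\le 2$. Suppose $f_1,f_2,f_3\in\cC'$ are $\F_q$-independent with kernel $U$. I would combine them $\F_{q^n}$-linearly inside the $3$-dimensional space $\{g\circ L_U\}$, cancelling the top coefficients, to produce a nonzero element of $\cC'$ with kernel of dimension $>2$, contradicting the minimum distance. Making this precise requires the explicit coefficient relations of $\mathcal{T}_{2k,s}(\eta)$: namely $b_0,b_2\in\F_{q^k}$, the coupling between positions $k\pm1$ and $k\pm2$, and the condition that $\eta$ is a non-square. These relations should show that $\langle f\rangle_{\F_{q^n}}\cap\cC'$ is small, and that two $\F_{q^n}$-independent kernel elements pin down at most one further $\F_q$-dimension. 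This would give $\rho(U)\in\{2n,2n-1,2n-2\}$ for $u=2$.
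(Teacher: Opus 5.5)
\textbf{The decisive step for $u=2$ is missing, and the mechanism you sketch cannot supply it.} Your treatment of $u>2$ and your set-up for $u=2$ agree with the paper: passing to $\cC'=X^{q^{s(k+2)}}\circ\cC$, using $\ker f=U$, and factoring $f=g\circ L_U$ with $\deg_{q^s}g\le 2$. What is never established is the bound $\dim_{\F_q}(\cC'\cap\ker\phi_U)\le 2$, and the proposed argument for it fails for two reasons.

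First, $\F_{q^n}$-linear combinations of $f_1,f_2,f_3$ leave $\cC'$. The code is closed only under $\F_q$-scalars: $\lambda c_2\in\F_{q^k}$ and $\lambda c_3=(\lambda c_1)^{q^{s(k+1)}}$ already force $\lambda\in\F_{q^k}\cap\F_{q^{\gcd(k+1,2)}}=\F_q$. So the minimum distance says nothing about such combinations.

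Second, even an $\F_q$-combination that kills the top coefficient only produces $g'\circ L_U$ with $\deg g'\le 1$. Its kernel has dimension $2+\dim(\ker g'\cap \mathrm{im}\,L_U)$, and nothing forces $\ker g'$ to meet $\mathrm{im}\,L_U$, so no element with kernel of dimension $>2$ appears.

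Theorem~\ref{alternating-missing} works because there the polynomials of degree $\le 3$ vanishing on a $3$-space form the $1$-dimensional $\F_{q^n}$-space $\F_{q^n}m_U$. Here that space is $3$-dimensional, so the proportionality trick is unavailable and the explicit shape of $\cC'$ must be used.

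The paper handles this step by equating coefficients in $f=g\circ m_U$ and asserting a rank count for the resulting linear system. Concretely, with $L_U=X^{q^{2s}}+a_1X^{q^s}+a_0X$ and $g=g_0X+g_1X^{q^s}+g_2X^{q^{2s}}$, the membership $g\circ L_U\in\cC'$ reads
\begin{gather*}
g_2\in\eta^{q^{4s}}\F_{q^k},\qquad g_0a_0=g_2^{q^{s(k-2)}},\qquad g_0+g_1a_1^{q^s}+g_2a_0^{q^{2s}}\in\F_{q^k},\\
g_1+g_2a_1^{q^{2s}}=\bigl(g_0a_1+g_1a_0^{q^s}\bigr)^{q^{s(k+1)}}.
\end{gather*}
You must show that this $\F_q$-linear system in $(g_1,g_2)$ has a solution space of dimension at most $2$; your proposal stops exactly where this computation should begin. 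Incidentally, since $q$ is odd, \eqref{sizeformula} gives $\dim\cC(U^\perp)=1+\dim(\cC^*\cap\Sym_{n,q}(U^\perp)^*)\ge 1$ for $u=2$. So the value $2n$ never occurs, and the target is equivalent to $\dim(\cC^*\cap\Sym_{n,q}(U^\perp)^*)\le 1$.

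\textbf{The case $u=1$.} You are right that Lemma~\ref{dual-sym-even} only bounds $d^*$ from above, so a lower bound $d^*\ge 3$ is genuinely needed; the paper's proof glosses over this. However, your suggested route via support concentration does not work. A rank-one symmetric element $c\,\alpha\Tr_{q^n/q}(\alpha X)$ has every coefficient $c\,\alpha^{1+q^{si}}$ nonzero.

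A direct argument does work. For $q$ odd, $\cC^*$ consists of the symmetric $h$ with $h_k=h_{k-1}=0$ and $\Tr_{q^n/q^k}(\eta h_{k-2})=0$. A rank-one element has $h_k=c_1\alpha^{1+q^{sk}}\neq 0$, so it is excluded. A rank-two element is $c_1\alpha\Tr(\alpha X)+c_2\beta\Tr(\beta X)$ with $c_1c_2\neq0$ and $\alpha,\beta$ independent. Then $h_k=h_{k-1}=0$ gives $(\beta/\alpha)^{1+q^{sk}}=(\beta/\alpha)^{1+q^{s(k-1)}}$, hence $(\beta/\alpha)^{q^s}=\beta/\alpha$ and $\beta/\alpha\in\F_q$, a contradiction.

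With $d^*\ge3$, \eqref{anticode-dual} gives $\rho(U)=n=u(2n-u+1)/2$ for $u=1$. The displayed value $u(2n-u-1)/2=n-1$ is the alternating formula, a misprint in the statement. No change of convention yields $n-1$, so you should say this explicitly rather than aim for the printed value.
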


\begin{proof}
By \eqref{rk-f-restricted} and Lemma \ref{dual-sym-even}, the rank function  $\rho$ of the q-polymatroid $\mathcal{M}[\mathcal{T}_{2k,s}(\eta)]$ is determined for any subspace of dimension $u \neq 2$.\\
 Consider the code
    $\mathcal{T}' =X^{q^{s(k+2)}} \circ \mathcal{T}_{2k,s}(\eta) $, which is given by:
\begin{align*}
      \mathcal{T}' = & \{ \eta^{q^{s(k+2)}}b_2^{q^{2s}}X + b_1^{q^{s(k+2)}} X^{q^s} +b_0^{q^{2s}}X^{q^{2s}}+ (b_1 X)^{q^{3s}}+(\eta b_2X)^{q^{4s}}\\
      &: b_0,b_2 \in \F_{q^k} \text{ and } b_1 \in \F_{q^{n}}\}.
\end{align*}
Observe that $\mathcal{M}[\mathcal{T}']$ is equivalent to $\mathcal{M}[\mathcal{T}_{2k,s}(\eta)]$, see \cite[Proposition 6.7]{gorla2019rank}.
 By hypotheses, each $f \in \mathcal{T}'$ has kernel of dimension at most $2$. Therefore, let $U$ be a $2$-dimensional $\F_q$-subspace of $\F_{q^n}$. Then, $f \in \ker \phi_U$ if and only if $\ker f = U$, where here the map $\phi_U$ is defined as in \eqref{restriction}.
 Hence, if $f= \sum_{i=0}^{4}f_i X^{q^{si}} \in \mathcal{T}' \cap \ker \phi_U $, then \begin{equation}\label{eq:U_in_kernel_f} f= g \circ m_U, \end{equation} where $g=\sum_{i=0}^{n-1}g_iX^{q^{si}}$ and $m_U=a_0X+a_1X^{q^s}+X^{q^{2s}}$ is the minimal polynomial of $U$. By equating the coefficients in both sides of \eqref{eq:U_in_kernel_f}, one easily gets that the $g_i $'s form a solution of a linear system of $n$ equation in $n$ unknowns with rank $n-2$.
    Hence, we have that 
    $$\rho(U) =  \dim \phi_U(\mathcal{T}')=\dim \mathcal{T}'-\dim (\ker \phi_U \cap \mathcal{T}') \in \{2n, 2n-1,2n-2\},$$
    which gives the result.
\end{proof}

Also in \cite{schmidt_symmetric_2015}, the author showed the following.

\begin{theorem}\label{th:punctured}  \cite[Theorem 4.1]{schmidt_symmetric_2015}.
Suppose that $\cC$ is a maximal additive $d$-code in $\Sym_{n,q}$ for
some $d \geq 3$ such that $n - d -1$ is even. Then $\cC^{[n-1]}$ is a maximal additive code in $\Sym_{n-1,q}$ with minimum distance $d-2$.
\end{theorem}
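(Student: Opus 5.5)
The plan is to follow the shortening/puncturing duality: first show that $\cC^{[n-1]}=A\cC A^t$, with $A=(I_{n-1}\;\,O_{(n-1)\times 1})$, has the same size as $\cC$ and minimum distance at least $d-2$. Then certify maximality through the dual, by identifying $(\cC^{[n-1]})^*$ with the shortening of $\cC^*$ to $U=\langle \ee_1,\ldots,\ee_{n-1}\rangle$.

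\textbf{Step 1: injectivity and distance.} Suppose $C\in\cC$ satisfies $ACA^t=O$. Then $C$ is supported on its last row and last column, so $\rk C\le 2<3\le d$, which forces $C=O$. Hence $C\mapsto ACA^t$ is injective on $\cC$ and $|\cC^{[n-1]}|=|\cC|$. Deleting one row and one column lowers the rank by at most $2$, so every nonzero element of $\cC^{[n-1]}$ has rank at least $d-2$. Additivity is clearly preserved.

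\textbf{Step 2: the dual of the punctured code.} The bilinear form $\chi(\tr(XY^t))$ on $\Sym_{n-1,q}$ agrees with the one on $\Sym_{n,q}$ once $Y$ is embedded as $\begin{pmatrix} Y & 0\\ 0&0\end{pmatrix}$, because $\tr(ACA^tY^t)=\tr(C\,(A^tYA)^t)$. Therefore $(\cC^{[n-1]})^*$ is identified with $\cC^*\cap\Sym_{n,q}(U)=\cC^*(U)$. This is exactly the restricted analogue of the duality behind Proposition~\ref{prop:punctured}. As a consequence, the minimum distance of $(\cC^{[n-1]})^*$ is at least $d^*$. As a sanity check on sizes, \eqref{size-relation} combined with Proposition~\ref{trivial} gives $|\cC^*(U)|=|\cC^*|/q^{n}$.

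\textbf{Step 3: maximality and exact distance.} Since $n-d$ is odd, the pair $(n,d)$ has opposite parity and the pair $(n-1,d-2)$ has the same parity pattern. I would split into the cases $d=2\delta-1$ with $n$ even, and $d=2\delta$ with $n$ odd. In each case I would compute $d^*$ exactly, using Lemma~\ref{dual-d-odd-sym} in the odd case and the analogous equality statement of Schmidt in the even case. I would then plug into the equality criterion recalled after \eqref{eq:sym_bound_d_odd}: an additive code is maximal exactly when its dual has distance at least $2t+3$, with $t$ read off from the new parameters. The lower bound $d((\cC^{[n-1]})^*)\ge d^*$ from Step~2 should supply the required inequality. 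The code thus attains the bound \eqref{eq:sym_bound_d_odd} or \eqref{eq:sym_bound_d_even} for the parameters $(n-1,\cdot)$. Finally, the bound evaluated at distance $d-1$ would be strictly smaller than $|\cC|$, which forces the minimum distance to be exactly $d-2$.

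\textbf{Main obstacle.} I expect the difficulty to lie in the parameter bookkeeping of Step~3. A naive comparison of $|\cC|$ with the bounds \eqref{eq:sym_bound_d_odd} and \eqref{eq:sym_bound_d_even} at $(n-1,d-2)$ does not obviously give equality. I would therefore rely on the dual-distance characterization rather than on the size count, and would recheck which of Schmidt's bounds (odd $d$, or even $d$ for additive codes) applies to the punctured parameters. If the gap persists, I would fall back on Schmidt's inner-distribution (association scheme) argument, which describes the dual distance directly.
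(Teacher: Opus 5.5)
\textbf{There is a genuine gap: Step~3 cannot be completed, and in fact the statement is false as written.} The difficulty you anticipated in Step~3 is not bookkeeping. With the hypothesis as stated ($n-d-1$ even, i.e.\ $n-d$ odd), the conclusion fails.

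In both parity subcases ($d$ odd and $n$ even, via \eqref{eq:sym_bound_d_odd}; $d$ even and $n$ odd, via \eqref{eq:sym_bound_d_even}), a maximal $\cC$ has $\log_q|\cC|=(n+1)(n-d+1)/2$. Your Step~1 correctly gives $|\cC^{[n-1]}|=|\cC|$. But $(n-1)-(d-2)=n-d+1$ is even, so the bound for additive $(d-2)$-codes in $\Sym_{n-1,q}$ is $q^{(n-1)(n-d+3)/2}$. Moreover $\frac{(n-1)(n-d+3)}{2}-\frac{(n+1)(n-d+1)}{2}=d-2\geq 1$. This bound is attained, e.g.\ by $\mathcal{S}_{n-1,d-2,s}$, so $\cC^{[n-1]}$ is never maximal.

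Switching to the dual-distance criterion cannot help, because it is an equivalent reformulation of equality in the same bound. Concretely, for $d$ odd and $n$ even, Step~2 only gives $d\bigl((\cC^{[n-1]})^*\bigr)\geq d^*=n-d+2$ (Lemma~\ref{dual-d-odd-sym}). The criterion at parameters $(n-1,d-2)$ instead demands $2t'+3=n-d+4$, with $t'=(n-d+1)/2$. Schmidt's inner-distribution argument would not rescue it either. Your final observation does survive: the bound at $(n-1,d-1)$ is $q^{n(n-d+1)/2}<|\cC|$, so the minimum distance is exactly $d-2$. The code simply is not maximal.

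\textbf{The fix is to correct the parity hypothesis.} Schmidt's Theorem~4.1 assumes $n-d$ \emph{even}. That is also how the paper uses it immediately afterwards, when it punctures $\mathcal{S}_{n+1,d+2,s}$ with $(n+1)-(d+2)$ even. The paper gives no proof of its own, only the citation. The Proposition that follows it inherits the misprint, since it would force $\dim\cC^{[n-1]}=\dim\cC+d-2$, which is impossible for a linear image of $\cC$.

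Under the correct hypothesis, your Step~1 already finishes the proof and Steps~2--3 are unnecessary:
\begin{itemize}
\item $\log_q|\cC^{[n-1]}|=\log_q|\cC|=n(n-d+2)/2$. This equals the bound for additive $(d-2)$-codes in $\Sym_{n-1,q}$, because $(n-1)-(d-2)$ is odd.
\item The minimum distance is at least $d-2$. It cannot be $\geq d-1$, because the bound at $(n-1,d-1)$ is $q^{(n-1)(n-d+2)/2}<|\cC|$. The bounds decrease in the distance, and additivity covers the even-distance case.
\end{itemize}
Hence $\cC^{[n-1]}$ is a maximal additive $(d-2)$-code.
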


Then, let us consider the maximum $(d+2)$-code $\mathcal{S}_{n+1,d+2,s}$ in $\Sym_{n+1,q}$ with $n-d-1$ even. Then $\cS'_{n,d,s}=\cS_{n+1,d+2,s}^{[n]} \subset \Sym_{n,q}$ is also maximum with minimum distance $d$.

\begin{proposition} Let $\cC \subset \Sym_{n,q}$ be an $\F_q$-linear maximal $d$-code such that $n-d-1$ is even. Then,  
\begin{equation}
\dim \cC^*(U)=\begin{cases} 
\binom{n}{2} + d-1 & if \, \mathrm{diag}(0,0,\ldots,0,1) \in \cC\\
\binom{n}{2} + d-2 & otherwise
\end{cases}
\end{equation}
where $U=\langle \mathbf{e}_1,\mathbf{e}_2,\ldots,\mathbf{e}_{n-1} \rangle$, with $\mathbf{e}_1,\ldots,\mathbf{e}_{n-1}$ the $i$-th vector of the standard basis of $\F_q^n$.
\end{proposition}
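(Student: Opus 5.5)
The plan is to reduce $\dim \cC^*(U)$ to a dimension count involving only $\cC$, the anticode $\Sym_{n,q}(U)$ and its dual. We then identify the relevant intersection with the kernel of the truncation $\cC \to \cC^{[n-1]}$ and use Theorem \ref{th:punctured}.

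\emph{Step 1: duality count.} Since $\cC^*(U)=\cC^*\cap \Sym_{n,q}(U)$, we have
$$\dim \cC^*(U)=\dim \Sym_{n,q}(U)+\dim \cC^*-\dim(\Sym_{n,q}(U)+\cC^*).$$
We also have $(\Sym_{n,q}(U)+\cC^*)^*=\Sym_{n,q}(U)^*\cap \cC$. This is the same manipulation as in \eqref{sizeformula}, now applied to $\cC^*$ in place of $\cC$. Combining it with \eqref{size-relation} and Proposition \ref{trivial} (so $\dim\Sym_{n,q}(U)=\binom{n}{2}$) gives
$$\dim \cC^*(U)=\binom{n}{2}-\dim\cC+\dim\bigl(\cC\cap \Sym_{n,q}(U)^*\bigr).$$

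\emph{Step 2: identify the intersection.} By the explicit description of $\cX_{n,q}(V)^*$ with $v=n-1$, the space $\Sym_{n,q}(U)^*$ consists exactly of the symmetric matrices supported on the last row and last column. This space is $n$-dimensional, and it is precisely the kernel of the truncation map $C\mapsto ACA^t$ from $\cC$ to $\cC^{[n-1]}$. Hence
$$\dim\bigl(\cC\cap \Sym_{n,q}(U)^*\bigr)=\dim\cC-\dim\cC^{[n-1]}.$$
Substituting this into Step 1 gives the clean identity $\dim \cC^*(U)=\binom{n}{2}-\dim\cC^{[n-1]}$. The value of $\dim\cC^{[n-1]}$ is then read off from Theorem \ref{th:punctured} and the bounds \eqref{eq:sym_bound_d_odd}/\eqref{eq:sym_bound_d_even}: $\cC^{[n-1]}$ is maximal in $\Sym_{n-1,q}$ with minimum distance $d-2$, and the parity condition on $n-d-1$ fixes which bound applies.

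\emph{Step 3: the case split.} A nonzero matrix in $\Sym_{n,q}(U)^*$ has rank $1$ or $2$. Rank $1$ occurs exactly when the only nonzero entry of its last row and column is the diagonal entry, i.e. for a scalar multiple of $\mathrm{diag}(0,\ldots,0,1)$. So the extra unit of dimension in the first case of the statement should come from this rank-one matrix lying in $\cC$, and the two cases differ by exactly this one-dimensional contribution to $\cC\cap\Sym_{n,q}(U)^*$.

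\emph{Main obstacle.} The hard step is reconciling this with the hypothesis $d\ge 3$ implicit in Theorem \ref{th:punctured}. Under that hypothesis $\cC$ contains no nonzero matrix of rank $\le 2$, so $\cC\cap \Sym_{n,q}(U)^*=\{\zero\}$ and the first case seems vacuous. The step I expect to be delicate is therefore tracking exactly which normalisation of $\cC^*(U)$ and of the duality (trace form versus coordinate form on the diagonal) the statement uses, so that the arithmetic lands on $\binom{n}{2}+d-1$ or $\binom{n}{2}+d-2$.

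I would first redo Step 1 carefully with the inner product from the definition of $\langle\cdot,\cdot\rangle$ on $\Sym_{n,q}$. I would then check the small case $n=4$, $d=3$ against the code $\cS'_{n,d,s}$ to pin down the constants before writing the general computation.
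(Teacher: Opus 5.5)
\textbf{This is a genuine gap, and it cannot be closed: the statement is false.}

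The identity you obtain in Steps 1--2, $\dim\cC^*(U)=\binom{n}{2}-\dim\cC^{[n-1]}$, is correct, and it decides the question \emph{against} the statement. It gives $\dim\cC^*(U)\le\binom{n}{2}$. This also follows directly from $\cC^*(U)\subseteq\Sym_{n,q}(U)$ and Proposition~\ref{trivial}, whatever form is used to define $\cC^*$. Theorem~\ref{th:punctured} needs $d\ge 3$, so both claimed values $\binom{n}{2}+d-2$ and $\binom{n}{2}+d-1$ exceed $\binom{n}{2}$. The obstacle you flag is therefore not a normalisation issue that a more careful recomputation will remove.

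Finishing your computation gives the correct value. For $d\ge3$, every nonzero element of $\Sym_{n,q}(U)^*$ has rank at most $2$, so $\cC\cap\Sym_{n,q}(U)^*=\{\zero\}$. Hence the truncation $C\mapsto ACA^t$ is injective and $\dim\cC^{[n-1]}=\dim\cC=\frac{(n+1)(n-d+1)}{2}$; both \eqref{eq:sym_bound_d_odd} and \eqref{eq:sym_bound_d_even} give this value when $n-d$ is odd. Therefore
\[
\dim\cC^*(U)=\binom{n}{2}-\frac{(n+1)(n-d+1)}{2}=\frac{(n+1)(d-3)}{2}+1,
\]
and $\mathrm{diag}(0,\ldots,0,1)\notin\cC$ always. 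For example, the paper's own code $\cC=\cS_{5,5,s}^{[4]}$ (with $n=4$, $d=3$) has $\dim\cC=5$ and $\dim\cC^*(U)=1$. The statement claims $7$, which exceeds $6=\dim\Sym_{4,q}(U)$. The cases $d\le 2$ fail too; e.g.\ $d=1$ forces $\cC=\Sym_{n,q}$ and $\cC^*(U)=\{\zero\}$.

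The remaining part of your Step 2, reading $\dim\cC^{[n-1]}$ off Theorem~\ref{th:punctured}, is where the paper's proof goes wrong, so do not import it. As quoted, that theorem has the wrong parity. For $n-d$ odd and $d\ge3$ the truncation is injective, so $\dim\cC^{[n-1]}=\frac{(n+1)(n-d+1)}{2}$. This is $d-2$ less than the value $\frac{(n-1)(n-d+3)}{2}$ that maximality in $\Sym_{n-1,q}$ would require. The puncturing result can only hold for $n-d$ even, which is how the paper itself applies it to $\cS_{n+1,d+2,s}$.

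The paper's proof also asserts $\cC/(\cC\cap\Sym_{n,q}(U)^*)\cong(\cC A^t)(\langle\mathbf{e}_n\rangle,c)$. As your Step 2 shows, that quotient is actually $\cC^{[n-1]}$, while $(\cC A^t)(\langle\mathbf{e}_n\rangle,c)\cong(\cC\cap\Sym_{n,q}(U)^*)/\cC(U^\perp)$. Its case distinction comes from the term $\dim\cC(U^\perp)$, which is $0$ as soon as $d\ge 2$. The correct outcome of your plan is a counterexample plus the corrected formula above, not a proof of the statement as written.
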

\begin{proof}
By Proposition~\ref{prop:punctured}, the $q$-polymatroid $\mathcal{M}[\cC^{[n-1]}]=\bigl(\mathscr{L}(\F_q^{\,n-1}),\rho_1\bigr)$ and $
\mathcal{M}[\cC A^t]\big|_{U}=\bigl(\mathscr{L}(U),\rho_2\bigr)$ are equivalent where \(A=(I_u \; O_{u\times (n-u)})\) and $U=\rowsp(A)$. The equivalence is given by the linear isomorphism 
$$\psi:(x_1,x_2,\ldots,x_{n-1})\in \F_q^{\,n-1}\longrightarrow (x_1,x_2,\ldots,x_{n-1},0) \in \F_{q}^n.$$ By Theorem \ref{th:punctured}, $\cC^{[n-1]} \subset \Sym_{n-1,q}$ is a maximal $(d-2)$-code. Then, by Formula \eqref{rk-f-restricted},
 \begin{equation*}
\dim \cC^{[n-1]}=\dim(\cC A^t)-\dim (\cC A^t) (\langle \mathbf{e}_n \rangle, c)). 
 \end{equation*}
It is straightforward to check that $\cC / \cC(U^\perp) \cong \cC A^t $ and $\cC /(\cC \cap \Sym_{n,q}(U)^*) \cong (\cC A^t)(\langle \mathbf{e}_n \rangle,c)$, respectively.
Then, 
\begin{equation}
\begin{aligned}
\frac{(n-1)(n-d+3)}{2} & = \dim (\cC \cap \Sym_{n,q}(U)^*)-\dim \cC(U^\perp)\\
&=\dim \Sym_{n,q}- \dim(\cC \cap \Sym_{n,q}(U)^*)^*-\dim \cC(U^\perp)\\
&=\dim \Sym_{n,q}- \dim(\cC^* + \Sym_{n,q}(U))-\dim\cC(U^\perp)\\
&=\dim \cC +\dim\cC^*(U)-\dim \Sym_{n,q}(U)- \dim \cC(U^\perp).
\end{aligned}
\end{equation}
Hence,
\begin{equation}
\begin{aligned}
    \dim \cC^*(U)&= \frac{(n-1)(n-d+3)}{2}-\frac{(n+1)(n-d+1)}{2}+ \binom{n}{2}+\dim \cC(U^\perp)\\
    &= \binom{n}{2}+ d-2 + \dim \cC(U^\perp)
    \end{aligned}
\end{equation}
Since the dimension of $\cC(U^\perp)$ is either one or zero, depending on whether the matrix $\mathrm{diag}(0,0,\ldots,0,1)$ belongs to $\cC$ or not, we get the result.
\end{proof}

\subsection{\texorpdfstring{$q^2$}{q2}-Polymatroids of Hermitian codes}

\begin{proposition}
    Let $\cC \subset \Her_{n,q}$, be an $\F_q$-linear $d$-code, $d\geq 2$.  If $n \geq 3$ and $d=n-1$, then the $q^2$-polymatroid $\mathcal{M}[\cC]$ is fully determined by the parameters.
\end{proposition}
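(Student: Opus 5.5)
The plan mirrors the alternating and symmetric cases and rests on formula \eqref{rk-f-restricted}. Let $U\leq E=\F_{q^2}^n$ be an $\F_{q^2}$-subspace of dimension $u$. In the Hermitian case $\maxrk \Her_{n,q}=n$, so \eqref{rk-f-restricted} reads
\begin{equation*}
\rho(U)=\begin{cases}\dim \cC & \textnormal{if } u>n-d,\\ \dim \Her_{n,q}(U^\perp)^* & \textnormal{if } \min\{2u,n\}<d^*.\end{cases}
\end{equation*}
With $d=n-1$ the first line covers every $U$ with $u\geq 2$. In that case $\rho(U)=\dim\cC$, because a nonzero element of $\cC(U^\perp)$ would have column space inside the $(n-u)$-dimensional space $U^\perp$, hence rank at most $n-2<d$.

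The remaining cases are $u=0$, where trivially $\rho=0$, and $u=1$. For $u=1$ we use the second line of \eqref{rk-f-restricted}, which needs $\min\{2,n\}=2<d^*$. By Lemma~\ref{dual-d-odd-her} we have $d^*\leq n-d+2=3$. So the task is to show $d^*\geq 3$, that is, that $\cC^*$ contains no Hermitian matrix of rank $1$ or $2$; then $d^*=3$.

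This is the main obstacle, since Lemma~\ref{dual-d-odd-her} only gives an upper bound on $d^*$. I would first try a counting argument through \eqref{sizeformula}. Take a $1$-dimensional $U$, so that $U^\perp$ has dimension $n-1$. Since $d=n-1$, the subcode $\cC(U^\perp)$ consists of $0$ together with matrices of full rank $n-1$ on $U^\perp$, and by the Hermitian Singleton-type bound (Schmidt's bound applied to $\Her_{n-1,q}$ with $d=n-1$) it has $\F_q$-dimension at most $n-1$. Plugging this into
\begin{equation*}
|\cC^*\cap \Her_{n,q}(U^\perp)^*|=\frac{|\cC(U^\perp)|\,|\Her_{n,q}(U^\perp)^*|}{|\cC|}
\end{equation*}
together with the dimension formula \eqref{anticode-dual}, $\dim \Her_{n,q}(U^\perp)^*=2n-1$, bounds the dimension of $\cC^*\cap\Her_{n,q}(U^\perp)^*$ in terms of $\dim\cC$. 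By Proposition~\ref{maxrank}, the matrices of $\Her_{n,q}(U^\perp)^*$ have rank at most $2$. Every rank-$1$ or rank-$2$ Hermitian matrix lies in some $\Her_{n,q}(W^\perp)^*$ with $\dim W=1$, where $W^\perp$ is chosen to contain its kernel-complement appropriately. So it suffices to show $\cC^*\cap\Her_{n,q}(U^\perp)^*=\{0\}$ for every $1$-dimensional $U$.

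If this intersection can instead be nonzero for some codes, I would fall back on computing $\rho(U)=\dim\cC-\dim\cC(U^\perp)$ directly for $u=1$. The aim would be to show that $\dim\cC(U^\perp)$ is forced by $n$, $d$ and $\dim\cC$, using the bound $\dim\cC\leq n(n-d+1)=2n$ and the structure of $(n-1)$-codes in $\Her_{n-1,q}$, which are spreadset-like. If both routes go through, the conclusion is $\rho(U)=\dim\Her_{n,q}(U^\perp)^*=2n-1$ for $u=1$ and $\rho(U)=\dim\cC$ for $u\geq2$, so $\mathcal{M}[\cC]$ is determined by the parameters.
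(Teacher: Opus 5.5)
\textbf{There is a genuine gap: the bound $d^*\ge 3$ cannot be obtained your way, and without maximality it is false.}

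You correctly isolate the crux. Lemma~\ref{dual-d-odd-her} only gives $d^*\le 3$, while the case $u=1$ of \eqref{rk-f-restricted} needs $d^*\ge 3$.

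Your counting argument does not supply this. Written out with \eqref{sizeformula} and \eqref{anticode-dual}, it is the identity $\dim(\cC^*\cap\Her_{n,q}(U^\perp)^*)=\dim\cC(U^\perp)+2n-1-\dim\cC$. The Singleton-type bound $\dim\cC(U^\perp)\le n-1$ then only gives $\dim(\cC^*\cap\Her_{n,q}(U^\perp)^*)\le 3n-2-\dim\cC$. Since $\dim\cC\le 2n$, this bound is at least $n-2\ge 1$, so it never forces the intersection to vanish. Vanishing amounts to the exact value $\dim\cC(U^\perp)=\dim\cC-2n+1$, which is the very value of $\rho(U)$ you are trying to determine. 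In fact, Schmidt's criterion quoted in the preliminaries says that, for additive codes, equality in $\log_q|\cC|\le n(n-d+1)$ holds iff $d^*\ge n-d+2$. So $d^*\ge 3$ holds \emph{exactly} when $\dim\cC=2n$.

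Your fallback fails for the same reason: for non-maximum codes, $\dim\cC(U^\perp)$ is not determined by $n$, $d$ and $\dim\cC$. For example, take $n=3$, $d=2$ and $\cC_1=\{\mathrm{diag}(a+b,a,b): a,b\in\F_q\}$. Let $\cC_2=\langle \mathrm{diag}(1,1,0),M\rangle_{\F_q}$ with $M=\left(\begin{smallmatrix}0&\beta&0\\ \beta^{q}&c&0\\ 0&0&1\end{smallmatrix}\right)$, where $c\in\F_q$ and $\beta\in\F_{q^2}$ are chosen so that $x^2+cx-\beta^{q+1}$ is irreducible over $\F_q$. Then $\det(a\,\mathrm{diag}(1,1,0)+bM)=b(a^2+abc-b^2\beta^{q+1})\neq 0$ for $b\neq 0$.

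Both are $2$-dimensional $\F_q$-linear $2$-codes in $\Her_{3,q}$. However, the rank-$2$ elements of $\cC_1$ span three distinct $\F_q$-lines, with column spaces $\langle\mathbf{e}_1,\mathbf{e}_2\rangle$, $\langle\mathbf{e}_1,\mathbf{e}_3\rangle$ and $\langle\mathbf{e}_2,\mathbf{e}_3\rangle$, whereas those of $\cC_2$ span only one. Hence $\rho(U)=1$ for three (respectively one) $1$-dimensional subspaces $U$, and $\rho(U)=2$ for all others. The two $q^2$-polymatroids are therefore not equivalent, although the codes have the same $n$, $d$ and dimension.

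The missing ingredient is the hypothesis that $\cC$ is maximum ($\dim\cC=2n$), with Schmidt's equality criterion as the source of $d^*\ge 3$; no counting bound on $\cC(U^\perp)$ can replace it. Once this hypothesis is added, your plan closes immediately. You get $d^*=3$, so $\cC^*\cap\Her_{n,q}(U^\perp)^*=\{0\}$ and $\dim\cC(U^\perp)=1$. Hence $\rho(U)=2n-1$ for $u=1$ and $\rho(U)=2n$ for $u\ge 2$; your value $2n-1$ is the correct one. The paper's one-line proof tacitly relies on the same thing. The only part of Lemma~\ref{dual-d-odd-her} that bounds $d^*$ from below is its ``in particular'' clause, which concerns maximum codes (and is stated for $d$ odd), and that hypothesis is absent from the statement as written.
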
\begin{proof}  By Formula \eqref{rk-f-restricted}, the rank function $\rho$ of the $q^2$-polymatroid $\mathcal{M}[\cC]$ in   can be written as
    \begin{equation} \label{rank-function-her}
    \rho(U)= 
    \begin{cases}
        \dim \cC & \textnormal{if } u > n-d \\
        \dim \Her_{n,q} (U^\perp)^* & \textnormal{if }  \min \{2u,  n  \}< d^* .
    \end{cases}     
    \end{equation}
    for any $u$-dimensional $\F_{q^2}$-subspace $U$ of $\F_{q^2}^n$.
By Lemma \ref{dual-d-odd-her}, indeed for $d=n-1$, we get
\begin{equation*}
    \rho(U)=
    \begin{cases}
        \dim \cC & \textnormal{if $u \geq 1$}\\
        u(n-1)^2  & \textnormal{otherwise}
    \end{cases}
\end{equation*}
 and this proves the claim.
\end{proof}

\begin{proposition}
Let $\mathcal{R}=\{(a_{ij}) \in \Her_{n,q} \colon a_{ii}=0\}$ be the maximal $2$-code of $\Her_{n,q}$  as defined in \eqref{miriam-code}. Then the rank function of $\mathcal{M}[\mathcal{R}]$ 
\[
\rho(U)=u(2 n - u-1),
\]
where $U$ is a $u$-dimensional $\F_{q^2}$-subspace of $\F_{q^2}^n$.
\end{proposition}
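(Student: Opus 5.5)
The plan is to reduce the statement to computing the dimension of a single shortened subcode, and to do that computation through the Delsarte-type size formula \eqref{sizeformula}.

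\textbf{Reduction.} By definition,
\[
\rho(U)=\dim\mathcal{R}-\dim\mathcal{R}(U^\perp), \qquad \dim_{\F_q}\mathcal{R}=n^2-n,
\]
since $\mathcal{R}$ is obtained from $\Her_{n,q}$ by imposing the $n$ independent $\F_q$-linear conditions $a_{ii}=0$. Set $W=U^\perp$ and $w=n-u$. The claimed formula $\rho(U)=u(2n-u-1)$ is then equivalent to
\[
\dim\mathcal{R}(W)=w^2-w \quad\text{for every } w\text{-dimensional } \F_{q^2}\text{-subspace } W\le \F_{q^2}^n .
\]
Indeed, $n^2-n-(n-u)^2+(n-u)=2nu-u^2-u$.

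\textbf{Computing $\dim\mathcal{R}(W)$.}
1. Identify the dual code. It is $\mathcal{R}^*=\mathcal{D}$, the $n$-dimensional space of diagonal matrices with entries in $\F_q$; this follows from the trace form and $\dim\mathcal{D}=\dim\Her_{n,q}-\dim\mathcal{R}$.
2. Apply \eqref{sizeformula} with Proposition~\ref{trivial}. This gives
\[
\dim\mathcal{R}(W)=w^2+\dim\bigl(\mathcal{D}\cap\Her_{n,q}(W)^*\bigr)-n .
\]
So everything reduces to showing $\dim(\mathcal{D}\cap\Her_{n,q}(W)^*)=n-w$.
3. Parametrize $\Her_{n,q}(W)$. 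Take $G\in\F_{q^2}^{n\times w}$ of rank $w$ whose columns span $W$, and write the elements of $\Her_{n,q}(W)$ as $GN(G^\sigma)^t$ with $N\in\Her_{w,q}$ (as in the proof of Lemma~\ref{lem:equiv}).
4. Translate membership in the dual. A diagonal matrix $\mathrm{diag}(d_1,\dots,d_n)$ lies in $\Her_{n,q}(W)^*$ if and only if
\[
\sum_i d_i\, g_iN(g_i^\sigma)^t=0 \quad\text{for all } N\in\Her_{w,q},
\]
where $g_i$ is the $i$-th row of $G$. Equivalently, $\sum_i d_i\,(g_i^\sigma)^tg_i=0$ as a Hermitian matrix.
5. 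Compute the dimension. The quantity $\dim(\mathcal{D}\cap\Her_{n,q}(W)^*)$ equals $n$ minus the $\F_q$-rank $r$ of the family of rank-one Hermitian matrices $(g_i^\sigma)^tg_i$, $i=1,\dots,n$. Hence $\dim\mathcal{R}(W)=w^2-r$.
6. Lower bound on $r$. The rows $g_i$ span $\F_{q^2}^w$, so some $w$ of them form a basis. The corresponding rank-one Hermitian matrices are $\F_q$-independent, since after a change of basis they become the distinct diagonal matrix units. This gives $r\ge w$, hence $\dim\mathcal{R}(W)\le w^2-w$, i.e. $\rho(U)\ge u(2n-u-1)$.

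\textbf{Main obstacle: the reverse inequality $r\le w$.} Equality $r=w$ requires every row $g_i$ to have its rank-one form $(g_i^\sigma)^tg_i$ lie in the $\F_q$-span of the $w$ forms coming from a basis. This is immediate when $W$ is a coordinate subspace, or more generally when $G$ has, up to monomial equivalence, at most one nonzero entry per row: then every such form is a diagonal unit times an element of $\F_q$, and the formula follows.

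For arbitrary $W$ I would first try to exploit the freedom in choosing $G$, namely right multiplication by $\GL(w,q^2)$, to bring $G$ to such a form. I expect this step to fail in general. For example, with $w=2$ and rows $(1,0),(0,1),(1,1)$, the three rank-one forms are already $\F_q$-independent, which would give $r=3$ and hence $\rho(U)>u(2n-u-1)$.

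So the proof closes cleanly for subspaces $U$ whose orthogonal complement is monomially equivalent to a coordinate subspace. For all other $U$ the argument above gives only the inequality $\rho(U)\ge u(2n-u-1)$, and the exact value is $n(n-1)-w^2+r$.
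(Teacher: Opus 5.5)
Your analysis is correct, and there is no missing idea on your side. The ``main obstacle'' you ran into is a genuine counterexample: the proposition is false for $n\ge 3$, so you should state your example as such rather than as an expectation.

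Concretely, take $n=3$, let $G$ have rows $(1,0),(0,1),(1,1)$, let $W$ be its column space, and let $N=\begin{pmatrix} a&b\\ b^q&c\end{pmatrix}\in\Her_{2,q}$. The diagonal of $GN(G^\sigma)^t$ is $(a,\,c,\,a+c+b+b^q)$. So $\mathcal{R}(W)$ is cut out by $a=c=0$ and $b+b^q=0$, and has $\F_q$-dimension $1$. For the $1$-dimensional $U$ with $U^\perp=W$ this gives $\rho(U)=6-1=5$. On the other hand, $\rho(\langle \ee_1\rangle)=6-2=4=u(2n-u-1)$. Thus $\rho$ is not even a function of $\dim U$.

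Your formula $\rho(U)=n(n-1)-w^2+r$ with $r\ge w$ is the correct general statement. Combine your step~6 with the observation that $(h^\sigma)^t h$ is diagonal iff $h$ has at most one nonzero entry. This shows $r=w$ exactly when $U^\perp$ has a basis of vectors with pairwise disjoint supports. Consequently the claimed identity holds for all $U$ only when $n\le 2$. For $n\ge 3$ it holds for every $U$ with $u\in\{0,n-1,n\}$. It fails for suitable $U$ in every dimension $1\le u\le n-2$: take $U^\perp=\langle \ee_1+\ee_n,\ee_2+\ee_n,\ee_3,\dots,\ee_{n-u}\rangle$, which gives $r=w+1$ and $\rho(U)=u(2n-u-1)+1$.

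The paper's proof only says the claim ``follows directly from the definition''. That is valid only when $U^\perp$ is a coordinate subspace, where $\mathcal{R}(U^\perp)$ is visibly a copy of the zero-diagonal Hermitian matrices of order $n-u$. Passing to arbitrary $U$ would need the transitivity argument of Lemma~\ref{lem:equiv}, but that lemma concerns $\Her_{n,q}(U)$, not $\mathcal{R}(U)$. The congruence $M\mapsto GM(G^\sigma)^t$ preserves the zero-diagonal condition defining $\mathcal{R}$ only when $G$ is monomial. Your computation of $r$ detects exactly this. The statement should be corrected rather than proved, for instance to the inequality $\rho(U)\ge u(2n-u-1)$ together with your equality criterion.
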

\begin{proof}
    The claim follows directly from the definition of rank function for a rank-metric code.
\end{proof}

\begin{theorem}\label{her-code-opposite}
    Let $n,d$ be integers with $n \geq 4$ even, $n-d=3$ and  consider $\mathcal{M}[\mathcal{H}_{n,d,s}]$ with $\mathcal{H}_{n,d,s}$ as in \eqref{eq:hermitiancodeoppositeparity}. Then, for any  $u$-dimensional $\F_{q^2}$-subspace $U$ of  $\F_{q^2}^n$, 
\begin{equation*}
    \rho(U)=
    \left\{
            \begin{array}{cl}
                4n                            &  \textit{if }   u \geq 4, \\
               u^2 & \textit{if}\,\, u \leq 2 ,
            \end{array}
            \right .
        \end{equation*}
and $\rho(U) \in \{4n, 4n-1\}$ if $u=3$.
\end{theorem}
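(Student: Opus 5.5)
The plan is to follow the scheme of Theorem~\ref{alternating-missing}: use \eqref{rk-f-restricted} for every dimension except $u=3$, and treat $u=3$ with the linearized-polynomial representation of $\mathcal{H}_{n,d,s}$. First the parameters. With $n-d=3$ the code $\mathcal{H}_{n,d,s}$ has $(n-d+1)/2=2$ free coefficients $b_1,b_2\in\F_{q^{2n}}$, so $\dim_{\F_q}\mathcal{H}_{n,d,s}=4n$. Since $n$ is even and $d=n-3$ is odd and the code is maximum, Lemma~\ref{dual-d-odd-her} gives $d^*=n-d+2=5$.

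For $u\geq 4>n-d$, the first case of \eqref{rk-f-restricted} gives $\rho(U)=\dim\mathcal{H}_{n,d,s}=4n$ directly. For $u\le 2$ we have $\min\{2u,n\}\le 4<5=d^*$. The second case of \eqref{rk-f-restricted} then gives $\rho(U)=\dim\Her_{n,q}(U^\perp)^*$, which I will read off from \eqref{anticode-dual} with $n-u$ in place of $u$. This settles every $u\neq 3$, and it remains to show $\rho(U)\in\{4n,4n-1\}$ when $u=3$.

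For $u=3$ I work in $\tilde{\mathcal{L}}_{n,q^2,s}[X]$, with $E$ identified with $\F_{q^{2n}}$ as an $\F_{q^2}$-space. I use the restriction map $\phi_U$ of \eqref{restriction}, so that $\rho(U)=\dim_{\F_q}\phi_U(\mathcal{C})$. The monomials occurring in $\mathcal{H}_{n,d,s}$ have $q^{2s}$-exponents $j=1,2$ and $n-j+1\equiv 0,\,n-1 \pmod n$. Composing on the left with $X^{q^{2s}}$ therefore gives an equivalent code $\mathcal{H}'$, whose polynomials have $q^{2s}$-degree at most $3$ and are supported on $X,X^{q^{2s}},X^{q^{4s}},X^{q^{6s}}$. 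By \cite[Proposition 6.7]{gorla2019rank}, $\mathcal{M}[\mathcal{H}']\cong\mathcal{M}[\mathcal{H}_{n,d,s}]$. Every nonzero $f\in\mathcal{H}'$ has an $\F_{q^2}$-kernel of dimension at most $3$. Hence for a $3$-dimensional $U$ we get $f\in\ker\phi_U$ if and only if $\ker f=U$.

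If $f_1,f_2\in\mathcal{H}'$ both vanish on $U$, then both have degree exactly $3$. If they were not $\F_{q^{2n}}$-proportional, a suitable $\F_{q^{2n}}$-combination of them would be nonzero, vanish on $U$ and have degree at most $2$, which is impossible. So $\ker\phi_U\cap\mathcal{H}'\subseteq\langle f\rangle_{\F_{q^{2n}}}\cap\mathcal{H}'$ for a single $f$.

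The main obstacle is to show that $\langle f\rangle_{\F_{q^{2n}}}\cap\mathcal{H}'=\langle f\rangle_{\F_q}$, i.e.\ that this intersection is only $1$-dimensional over $\F_q$. I would argue as follows. Suppose $\lambda f\in\mathcal{H}'$. Write out the coefficient relations in $\mathcal{H}'$: the coefficients at $X$ and $X^{q^{4s}}$ are linked (both come from $b_1$), and so are those at $X^{q^{2s}}$ and $X^{q^{6s}}$ (both come from $b_2$). Each link is a Frobenius-twisted relation of the form $c'=\bar c^{\,q^{t}}$. Applying such a relation to both $f$ and $\lambda f$ forces $\lambda^{q^{t}}=\lambda$ for an exponent $t$ with $\gcd(t,2n)=1$ once $\gcd(s,2n)=1$ is used. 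I still have to check that this exponent really is coprime to $2n$, and that a nonzero coefficient of $f$ is available to divide by (the degree-$3$ coefficient is nonzero). Granting that, $\lambda\in\F_q$, so $\dim_{\F_q}(\ker\phi_U\cap\mathcal{H}')\le 1$. Hence $\rho(U)=4n-\dim_{\F_q}(\ker\phi_U\cap\mathcal{H}')\in\{4n,4n-1\}$.
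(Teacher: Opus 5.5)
Your plan is the paper's own proof: apply \eqref{rk-f-restricted} with $d^*=5$ for $u\neq 3$, then pass to $\mathcal{H}'=X^{q^{2s}}\circ\mathcal{H}_{n,n-3,s}$ and argue as in Theorem~\ref{alternating-missing}. The gap is the step you grant at the end, and it starts with a wrong pairing of coefficients. We have $\bigl((b_2X)^{q^{s(2n-2)}}\bigr)^{q^{2s}}=b_2X$ and $\bigl(b_2^{q^s}X^{q^{4s}}\bigr)^{q^{2s}}=b_2^{q^{3s}}X^{q^{6s}}$, so
\[
\mathcal{H}'=\{aX+bX^{q^{2s}}+b^{q^s}X^{q^{4s}}+a^{q^{3s}}X^{q^{6s}} : a,b\in\F_{q^{2n}}\}.
\]
Thus the top coefficient is linked to the constant one with twist $q^{3s}$, and the two middle ones are linked with twist $q^{s}$. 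The paper's display writes $a^{q^{s}}X^{q^{6s}}$, which is what makes its ``similar argument'' look automatic. Dividing by the nonzero top coefficient gives only $\lambda^{q^{3s}}=\lambda$, i.e.\ $\lambda\in\F_{q^{\gcd(3s,2n)}}=\F_{q^{\gcd(3,n)}}$. The $q^s$-link forces $\lambda\in\F_q$ only when $b\neq 0$. So your argument is complete exactly when $3\nmid n$.

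When $3\mid n$ (hence $6\mid n$), the claim itself is false, so no patch of this step exists. Take $b=0$ and $f=aX+a^{q^{3s}}X^{q^{6s}}$ with $a\in\F_{q^6}^*$ and $a^{q^3}=-a$: for $q$ odd, a square root of a non-square of $\F_{q^3}$; for $q$ even, any $a\in\F_{q^3}^*$. Since $s$ is odd, $a^{q^{3s}}=-a$, so $f(1)=0$. Moreover $f(\mu x)=\mu f(x)$ for all $\mu\in\F_{q^6}$, so $\ker f=\F_{q^6}=:U$, which has $\F_{q^2}$-dimension $3$. Then $\lambda f\in\mathcal{H}'\cap\ker\phi_U$ for every $\lambda\in\F_{q^3}$. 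Hence $\dim_{\F_q}(\mathcal{H}'\cap\ker\phi_U)=3$ and $\rho(U)=4n-3$; this already happens for $n=6$. Your argument, corrected, gives $\rho(U)\in\{4n,4n-1\}$ if $3\nmid n$, and $\rho(U)\in\{4n,4n-1,4n-3\}$ if $3\mid n$, with $4n-3$ attained.

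Separately, for $u\le 2$ your reading of \eqref{anticode-dual} gives $\rho(U)=u(2n-u)$, that is $2n-1$ and $4n-4$, not the stated $u^2$. You should write down the value your argument actually produces rather than leave it to be ``read off''.
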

\begin{proof}
Consider the maximum rank distance code $\mathcal{H}_{n,d,s}$ with the parameters as in the statement. By \eqref{rk-f-restricted} and Lemma \ref{dual-d-odd-her}, $\rho(U)$ is determined  for any $u$-dimension $\F_{q^2}$-subspace of $\F_{q^2}^n$ whenever $u \neq 3$.
 Consider the code
  , which is given by:
    \begin{equation*}
       \mathcal{H}'=X^{q^{2s}} \circ \mathcal{H}_{n,n-3,s}= \{ a X + b X^{q^{2s}} + b^{q^s} X^{q^{4s}}+a^{q^{s}}X^{q^{6s}} : a,b \in \F_{q^{2n}}\}.
      \end{equation*}
and note that the polynomials in $\mathcal{H}'$ have $q^{2s}$-degree at most $3$.\\
Moreover, the polymatroid $\mathcal{M}[\mathcal{H}']$ is equivalent to $\mathcal{M}[\mathcal{H}_{n,n-3,s}]$, see \cite[Proposition 6.7]{gorla2019rank}. The result follows by applying an argument similar to that used in the proof of Theorem \ref{alternating-missing}.
\end{proof}


\begin{theorem}
    Let $n,d$ be integers with $n \geq 5$ odd and $n-d=2$ and  consider $\mathcal{M}[\mathcal{E}_{n,d,s}]$. Then, for any $u$-dimensional $\F_{q^2}$-subspace $U$ of  $\F_{q^2}^n$, 
\begin{equation*}
    \rho(U)=
    \left\{
            \begin{array}{cl}
                3n                            &  \textit{if }   u \geq 4, \\
               u^2 & \textit{if}\,\, u \leq 2 ,
            \end{array}
            \right .
        \end{equation*}
and $\rho(U) \in \{3n, 3n-1\}$ if $u=2$.
\end{theorem}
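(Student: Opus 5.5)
The plan follows the pattern of Theorem \ref{her-code-opposite} and Theorem \ref{alternating-missing}. First I split according to $u=\dim_{\F_{q^2}}U$ and use \eqref{rk-f-restricted}.

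\emph{Reading of the statement.} Here $n-d=2$, so the natural split is $u>2$, $u=1$ and $u=2$. I read the case labels in the statement this way ($u\ge 3$, $u=1$, and the undetermined case $u=2$).

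\emph{Case $u>n-d=2$.} Here $\mathcal{E}_{n,n-2,s}(U^\perp)=\{\mathbf 0\}$. Hence $\rho(U)=\dim\mathcal{E}_{n,n-2,s}$. Counting parameters ($b_0\in\F_{q^n}$ and $b_1\in\F_{q^{2n}}$) gives $\rho(U)=n+2n=3n$.

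\emph{Case $u=1$.} Since $d=n-2$ is odd and the code is maximum, Lemma \ref{dual-d-odd-her} gives $d^*=n-d+2=4$. Then $\min\{2u,n\}=2<4$, so Proposition \ref{polymatroid} determines $\rho(U)$ as $\dim\Her_{n,q}(U^\perp)^*$, computed via \eqref{anticode-dual}.

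\emph{Case $u=2$.} Here $\min\{4,n\}=4\not<d^*$, so the value is not forced by the parameters. I would treat it with the linearized-polynomial argument.

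\emph{Step 1: reduce the degree.} Write the exponents $q^{s(n+1)},q^{s(n+3)},q^{s(n-1)}$ as $q^{2s}$-powers with exponents $\frac{n+1}{2},\frac{n+3}{2},\frac{n-1}{2}$; this is legitimate since $n$ is odd. Composing on the left with $X^{q^{2s(n-\frac{n-1}{2})}}$ gives a code $\mathcal{E}'$ of $q^{2s}$-polynomials of $q^{2s}$-degree at most $2$. By \cite[Proposition 6.7]{gorla2019rank}, $\mathcal{M}[\mathcal{E}']\cong\mathcal{M}[\mathcal{E}_{n,n-2,s}]$.

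\emph{Step 2: describe the kernel of the restriction map.} Use $\rho(U)=\dim\phi_U(\mathcal{E}')$ with $\phi_U$ as in \eqref{restriction}, taken over the $\F_{q^2}$-structure. A nonzero $f\in\mathcal{E}'$ has kernel of $\F_{q^2}$-dimension at most $2$. So $f\in\ker\phi_U$ if and only if $\ker f=U$, and then $f$ has degree exactly $2$.

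\emph{Step 3: proportionality.} Take two such $f_1,f_2$. If they were not $\F_{q^{2n}}$-proportional, a suitable combination $a_1f_1+a_2f_2$ would be nonzero, of degree at most $1$, and vanish on the $2$-dimensional $U$, which is impossible. Hence $\ker\phi_U\cap\mathcal{E}'\subseteq\langle f\rangle_{\F_{q^{2n}}}\cap\mathcal{E}'$.

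\emph{Step 4: the main obstacle.} It remains to show $\langle f\rangle_{\F_{q^{2n}}}\cap\mathcal{E}'=\langle f\rangle_{\F_q}$. Suppose $\lambda f\in\mathcal{E}'$.
\begin{itemize}
\item The coefficients of degree $0$ and $2$ of elements of $\mathcal{E}'$ are both determined by $b_1$, through a fixed Frobenius twist.
\item So $\lambda f\in\mathcal{E}'$ forces an equation $\lambda=\lambda^{q^{t}}$ for an explicit $t$.
\item Equivalently, the degree-$1$ coefficient, coming from $b_0\in\F_{q^n}$, forces $\lambda$ into a subfield.
\end{itemize}
Using $\gcd(2n,s)=1$, I expect to conclude $\lambda\in\F_q$. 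If this fails, the weaker bound $\dim(\ker\phi_U\cap\mathcal{E}')\le 2$ would follow from the same analysis, but I expect the subfield computation to give exactly $\F_q$.

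\emph{Conclusion.} With Step 4, $\dim_{\F_q}(\ker\phi_U\cap\mathcal{E}')\le 1$. Therefore $\rho(U)=3n-\dim(\ker\phi_U\cap\mathcal{E}')\in\{3n,3n-1\}$.
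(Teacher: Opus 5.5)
Your proposal is correct and follows the paper's proof: you settle $u\neq 2$ via \eqref{rk-f-restricted} and Lemma~\ref{dual-d-odd-her} (with $d^*=4$), then compose with $X^{q^{s(n+1)}}$ to reach $q^{2s}$-degree at most $2$ and rerun the argument of Theorem~\ref{alternating-missing}. Your Step~4, left as an expectation, does go through: the degree-$2$ coefficient of an element of $\mathcal{E}'$ is its degree-$0$ coefficient raised to $q^{s(n+2)}$, and $\gcd(s(n+2),2n)=1$ for $n$ odd, which forces $\lambda\in\F_q$. One caveat: for $u=1$ your route via \eqref{anticode-dual} gives $\rho(U)=(n-(n-1))(n+(n-1))=2n-1$, not the printed $u^2$, which is a typo like the range $u\ge 4$; state this value explicitly rather than leaving it uncomputed.
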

\begin{proof}
  Consider the maximum rank distance $(n-2)$-code $\mathcal{E}_{n,n-2,s}$ with $n\geq 5$ odd. By \eqref{rk-f-restricted} and Lemma \ref{dual-d-odd-her}, $\rho$ is determined if $u \neq 2$. Then, let us assume $u=2$. Consider the following code:
    \begin{equation*}
       \mathcal{E}'=X^{q^{s(n+1)}} \circ \mathcal{E}_{n,n-2,s}= \{ a^q X + b X^{q^{2s}} + a^{q^s} X^{q^{4s}} : a,b \in \F_{q^{2n}}\}.
      \end{equation*}
In order to get the result, it is enough that the polynomials in $\mathcal{E}'$ have $q^s$-degree at most $2$ and by applying an argument similar to that used in the proof of Theorem \ref{alternating-missing}.
\end{proof}

\section{Duality}\label{last_section}

In this last section, we collect a few results on the connection between the dual of a $q$-polymatroid  and the dual of a code in the restricted settings.

Let $E$ be an $\F_q$-vector space and $\M=(\mathscr{L}(E),\rho)$ be a $(q,r)$-polymatroid. For every
$U \in \mathscr{L}(E)$, we define
\begin{equation}\label{dualrank}
\rho^*(U)= r \cdot \dim U - \rho(E) + \rho(U^\perp).
\end{equation}
It is well known that $\M^*=(\mathscr{L}(E),\rho^*)$ is a $(q,r)$-polymatroid as well, called
the \emph{dual} of~$\M$.
In~\cite[Theorem~8.1]{gorla2019rank}, the authors showed that if
$\cC \subset \F_q^{m \times n}$, then
\[
\mathcal{M}^*_c[\cC] =\mathcal{M}_c[\cC^\perp]
\quad\text{and}\quad
\mathcal{M}^*_r[\cC] = \mathcal{M}_r[\cC^\perp],
\]
where $\cC^\perp$ denotes the Delsarte dual of $\cC$.\\

If $\cC \subset \cX_{n,q}$, it is natural to investigate whether there exists a meaningful relation between the rank function of $\mathcal{M}[\cC^*]=(\mathscr{L}(E),\rho_*)$ and that of the $q$-polymatroid $\mathcal{M}^*[\cC]=(\mathscr{L}(E),\rho^*)$.

For this purpose, we establish some preliminary results.

\begin{lemma}\label{inequality}
Let $\cC,\cD \subset \F_{q}^{m \times n}$ be $\F_q$-linear rank-metric codes. Then, for any subspaces $U,V \leq \F_q^m$,
\begin{equation}\label{eq:ineq}
\begin{aligned}
&\dim \cC(U,c)+\dim \cC(V,c)
+\dim (\cC \cap \cD)(U\cap V,c)
+\dim (\cC \cap \cD)(U+V,c)\\
&\leq
\dim \cC(U\cap V,c)+\dim \cC(U+V,c)
+\dim (\cC \cap \cD)(U,c)
+\dim (\cC \cap \cD)(V,c).
\end{aligned}
\end{equation}
\end{lemma}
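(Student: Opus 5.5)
Set $\cD' := \cC\cap\cD$. Since $\cD'(W,c)=\cD'\cap\cC(W,c)$ for every $W$, the inequality \eqref{eq:ineq} can be rewritten as
\[
\dim \cC(U,c)-\dim \cD'(U,c) + \dim \cC(V,c)-\dim\cD'(V,c) \le \dim \cC(U\cap V,c)-\dim\cD'(U\cap V,c)+\dim\cC(U+V,c)-\dim\cD'(U+V,c).
\]
So the claim is that the function $g(W):=\dim \cC(W,c)-\dim (\cC\cap\cD)(W,c)$ is supermodular on $\mathscr{L}(\F_q^m)$. We observe that
\[
g(W)=\dim\bigl(\cC(W,c)/(\cD\cap \cC(W,c))\bigr)=\dim\bigl((\cC(W,c)+\cD)/\cD\bigr).
\]
Write $\pi:\F_q^{m\times n}\to \F_q^{m\times n}/\cD$ for the quotient map. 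Then $g(W)=\dim \pi(\cC(W,c))$.

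The argument now follows from \eqref{eq:relations}. Set $A=\pi(\cC(U,c))$ and $B=\pi(\cC(V,c))$. Since $\cC(U,c)+\cC(V,c)\subseteq \cC(U+V,c)$, we have $A+B\subseteq \pi(\cC(U+V,c))$. Since $\cC(U\cap V,c)=\cC(U,c)\cap\cC(V,c)$, we have $\pi(\cC(U\cap V,c))\subseteq A\cap B$. This inclusion is the delicate direction: in general $\pi(X\cap Y)\subsetneq \pi(X)\cap\pi(Y)$, so $g(U\cap V)$ may only be bounded above by $\dim(A\cap B)$, and that bound goes the wrong way for supermodularity. The key step is therefore to compare through the quotient with an exact count rather than through images.

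Concretely, we compute $\dim \pi(X)+\dim\pi(Y)$ for $X=\cC(U,c)$ and $Y=\cC(V,c)$ via
\[
\dim\pi(X)=\dim X-\dim(X\cap\cD).
\]
This gives
\[
g(U)+g(V)=\dim X+\dim Y-\dim(X\cap\cD)-\dim(Y\cap\cD).
\]
We then use $\dim X+\dim Y=\dim(X+Y)+\dim(X\cap Y)$ together with the analogous identity for $X\cap\cD$ and $Y\cap\cD$, which yields
\[
g(U)+g(V)=\bigl[\dim(X+Y)-\dim((X\cap \cD)+(Y\cap\cD))\bigr]+\bigl[\dim(X\cap Y)-\dim(X\cap Y\cap\cD)\bigr].
\]
The second bracket equals $g(U\cap V)$ exactly, because $X\cap Y=\cC(U\cap V,c)$.

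For the first bracket, $(X\cap\cD)+(Y\cap\cD)\subseteq (X+Y)\cap\cD$, so it is at least $\dim\pi(X+Y)$. This is the wrong direction again, so we expect this step to be the main obstacle. To bound it from above by $g(U+V)=\dim\pi(\cC(U+V,c))$, we first try to show
\[
\dim(X+Y)-\dim((X\cap\cD)+(Y\cap\cD))\le \dim \cC(U+V,c)-\dim(\cC(U+V,c)\cap\cD).
\]
Setting $Z=\cC(U+V,c)\supseteq X+Y$, this is equivalent to
\[
\dim(Z\cap\cD)-\dim\bigl((X\cap\cD)+(Y\cap\cD)\bigr)\le \dim Z-\dim(X+Y).
\]
This follows if the natural map
\[
(Z\cap\cD)/\bigl((X\cap\cD)+(Y\cap\cD)\bigr)\to Z/(X+Y)
\]
is injective, which holds exactly when $(X+Y)\cap\cD=(X\cap\cD)+(Y\cap\cD)$. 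That distributivity is not automatic. If it fails, we fall back on the column-support structure: since the spaces $\cC(W,c)$ are intersections with the anticodes $\F_q^{m\times n}(W,c)$, which form a distributive family (for example, in a basis adapted to $U\cap V$, $U$, $V$ they are coordinate subspaces), we will attempt to prove the needed distributivity using that adapted basis. If that also fails, we will re-examine whether the intended statement carries its terms with the opposite roles.
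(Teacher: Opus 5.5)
There is a genuine gap, and it is exactly the step you flagged. It cannot be closed, because the inequality \eqref{eq:ineq} is false as stated. Your reduction is correct. Set $X=\cC(U,c)$, $Y=\cC(V,c)$, $Z=\cC(U+V,c)$ and $X'=X\cap\cD$, $Y'=Y\cap\cD$, $Z'=Z\cap\cD$; then the inequality is equivalent to $\dim Z'-\dim(X'+Y')\le \dim Z-\dim(X+Y)$.

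Now take $m=n=2$, $\cC=\F_q^{2\times 2}$, $\cD=\langle I_2\rangle$, $U=\langle \mathbf{e}_1\rangle$, $V=\langle \mathbf{e}_2\rangle$.
\begin{itemize}
\item $X$ and $Y$ are the matrices with vanishing second, resp.\ first, row. Hence $X+Y=Z=\F_q^{2\times 2}$, and the right-hand side of your reduced inequality is $0$.
\item $I_2$ has column space $\F_q^2$, so $X'=Y'=\{\mathbf{0}\}$ while $Z'=\cD$. The left-hand side is therefore $1$.
\item In terms of \eqref{eq:ineq} itself, the two sides are $2+2+0+1=5$ and $0+4+0+0=4$.
\end{itemize}
Your fallback through an adapted basis cannot work either. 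The anticodes $\F_q^{m\times n}(W,c)$ do form a distributive family, but $\cD$ is an arbitrary subspace and nothing forces it to split along them. Swapping the roles of the terms does not rescue the statement: with $\cD=\{\mathbf{0}\}$ and $\cC=\langle I_2\rangle$, the reversed inequality would read $0+0\ge 0+1$. So neither orientation holds in general.

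The paper's own proof breaks at the same point. After using $\cC(U,c)+\cC(V,c)\subseteq\cC(U+V,c)$ and the Grassmann formula, it has shown only that the left side is at most the right side plus $\dim(\cC\cap\cD)(U+V,c)-\dim\bigl((\cC\cap\cD)(U,c)+(\cC\cap\cD)(V,c)\bigr)$. It then cites $\dim\bigl((\cC\cap\cD)(U,c)+(\cC\cap\cD)(V,c)\bigr)\le\dim(\cC\cap\cD)(U+V,c)$. That only shows the extra term is nonnegative, so it cannot be discarded. One would need the opposite inequality, i.e.\ $Z'=X'+Y'$, which fails in the example above.

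The error propagates to Theorem \ref{th:quotient}. In the same example, $\rho_{\cC}-\rho_{\cC\cap\cD}$ takes the values $0$, $1$, $3$ on $\{\mathbf{0}\}$, on lines, and on $\F_q^2$ respectively. For two distinct lines this gives $3+0>1+1$, violating (R3). So no argument can succeed without an extra hypothesis, for instance $(X+Y)\cap\cD=(X\cap\cD)+(Y\cap\cD)$ for the relevant $U,V$, which is exactly the injectivity condition you identified as sufficient. Your instinct to stop and question the statement, rather than force the step, was the right one.
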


\begin{proof}
By \eqref{eq:relations}, the left-hand side of \eqref{eq:ineq} can be rewritten as
\begin{equation*}
\begin{aligned}
&\dim\bigl(\cC(U,c)+\cC(V,c)\bigr)
+\dim \cC(U\cap V,c)\\
&\quad
+\dim\bigl((\cC\cap\cD)(U,c)\cap(\cC\cap\cD)(V,c)\bigr)
+\dim (\cC\cap\cD)(U+V,c).
\end{aligned}
\end{equation*}

\noindent Since $\cC(U,c)+\cC(V,c)\subseteq \cC(U+V,c)$ and applying again the Grassmann formula, we obtain that 
\begin{equation*}
\begin{aligned}
&\dim\bigl(\cC(U,c)+\cC(V,c)\bigr)
+\dim \cC(U\cap V,c)\\
&\quad
+\dim\bigl((\cC\cap\cD)(U,c)\cap(\cC\cap\cD)(V,c)\bigr)
+\dim (\cC\cap\cD)(U+V,c) \leq\\ 
&\dim \cC(U+V,c)
+\dim \cC(U\cap V,c) +\dim (\cC\cap\cD)(U,c)\\
&\quad+\dim (\cC\cap\cD)(V,c)
-\dim\bigl((\cC\cap\cD)(U,c)\\
& \quad +(\cC\cap\cD)(V,c)\bigr)+\dim (\cC\cap\cD)(U+V,c).
\end{aligned}
\end{equation*}

Finally, since
\[
\dim\bigl((\cC\cap\cD)(U,c)+(\cC\cap\cD)(V,c)\bigr)
\leq
\dim (\cC\cap\cD)(U+V,c),
\]
the inequality in \eqref{eq:ineq} follows.
\end{proof}

Let $\mathcal{C}, \mathcal{D}$ be $\mathbb{F}_q$-linear codes in $\mathbb{F}_q^{m\times n}$, and let
$\rho_{\mathcal{C}}, \rho_{\mathcal{D}}, \rho_{\mathcal{C}\cap\mathcal{D}}$ be the rank functions of the column $(q,n)$-polymatroids associated with $\mathcal{C}$, $\mathcal{D}$, and $\mathcal{C}\cap\mathcal{D}$, respectively. For the subspaces $U \leq \F_{q}^m$ define the integer-valued functions
\begin{equation}\label{quotient1}
\rho_{\footnotesize{\faktor{\cC}{\cC \cap\cD}}}(U)=\rho_{\cC}(U)-\rho_{\cC \cap \cD}(U)
\end{equation}
and 
\begin{equation}\label{quotient2}
\rho_{\footnotesize{\faktor{\cD}{\cC \cap\cD}}}(U)=\rho_{\cD}(U)-\rho_{\cC \cap \cD}(U)
\end{equation}

Therefore, we have the following.

\begin{theorem}\label{th:quotient}
Let $\mathcal{C}, \mathcal{D}$ be $\mathbb{F}_q$-linear codes in $\mathbb{F}_q^{m\times n}$, and let
$\rho_{\mathcal{C}}, \rho_{\mathcal{D}}, \rho_{\mathcal{C}\cap\mathcal{D}}$ denote the rank functions of the column $(q,n)$-polymatroids associated with $\mathcal{C}$, $\mathcal{D}$, and $\mathcal{C}\cap\mathcal{D}$, respectively.
Then the pairs
\[
\bigl(\mathscr{L}(\F_q^m),\rho_{\faktor{\cC}{\cC\cap\cD}}\bigr)
\quad\text{and}\quad
\bigl(\mathscr{L}(\F_q^m),\rho_{\faktor{\cD}{\cC\cap\cD}}\bigr)
\]
are $(q,n)$-polymatroids. In particular, for every $U\le \F_q^m$,
\begin{equation}\label{expr1}
\begin{aligned}
\rho_{\faktor{\cC}{\cC\cap\cD}}(U)
&=
\dim(\cC+\cD)
-\dim\bigl(\cC(U^\perp,c)+\cD\bigr),\\
\rho_{\faktor{\cD}{\cC\cap\cD}}(U)
&=
\dim(\cC+\cD)
-\dim\bigl(\cC+\cD(U^\perp,c)\bigr).
\end{aligned}
\end{equation}
\end{theorem}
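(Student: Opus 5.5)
The plan is to first establish the closed formulas \eqref{expr1} by a dimension count, and then check axioms (R1)--(R3) directly. Throughout, write $\cC_U:=\cC(U^\perp,c)$ and similarly for $\cD$ and $\cC\cap\cD$.

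\textbf{Step 1: the formulas \eqref{expr1}.} By definition,
\[
\rho_{\faktor{\cC}{\cC\cap\cD}}(U)=\dim\cC-\dim\cC_U-\dim(\cC\cap\cD)+\dim(\cC\cap\cD)_U .
\]
By \eqref{eq:relations}, or directly from the definition, $(\cC\cap\cD)_U=\cC_U\cap\cD$. The Grassmann formula applied to the pair $\cC_U,\cD$ gives
\[
\dim(\cC_U+\cD)=\dim\cC_U+\dim\cD-\dim(\cC_U\cap\cD).
\]
Combining this with $\dim(\cC+\cD)=\dim\cC+\dim\cD-\dim(\cC\cap\cD)$ yields the first line of \eqref{expr1}. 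The second line follows by exchanging the roles of $\cC$ and $\cD$.

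\textbf{Step 2: axioms (R1) and (R2).} Integrality is clear. For nonnegativity and monotonicity, note that $U\le V$ implies $V^\perp\le U^\perp$, hence $\cC_V\le\cC_U$, hence $\cC_V+\cD\le\cC_U+\cD$. Since also $\cC_U+\cD\le\cC+\cD$, the expression in \eqref{expr1} is nonnegative and nondecreasing in $U$. For the upper bound we have
\[
\rho_{\faktor{\cC}{\cC\cap\cD}}(U)\le\dim\cC-\dim\cC_U=\rho_\cC(U)\le n\dim U,
\]
because the quotient map $\cC/\cC_U\to(\cC+\cD)/(\cC_U+\cD)$ is surjective and $\rho_\cC$ satisfies (R1) by \cite[Theorem 5.2]{gorla2019rank}.

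\textbf{Step 3: submodularity (R3), the main step.} Orthogonal complement exchanges sums and intersections: $(U+V)^\perp=U^\perp\cap V^\perp$ and $(U\cap V)^\perp=U^\perp+V^\perp$. Setting $U'=U^\perp$ and $V'=V^\perp$, the required inequality
\[
\rho(U+V)+\rho(U\cap V)\le\rho(U)+\rho(V)
\]
becomes, after cancelling the constant terms $\dim\cC-\dim(\cC\cap\cD)$,
\begin{multline*}
-\dim\cC(U'\cap V',c)+\dim(\cC\cap\cD)(U'\cap V',c)-\dim\cC(U'+V',c)+\dim(\cC\cap\cD)(U'+V',c)\\
\le-\dim\cC(U',c)+\dim(\cC\cap\cD)(U',c)-\dim\cC(V',c)+\dim(\cC\cap\cD)(V',c).
\end{multline*}
Rearranging terms, this is exactly Lemma~\ref{inequality} applied to $U',V'$. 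For the second function we use the same lemma with $\cC$ and $\cD$ swapped, noting that $\cD\cap\cC=\cC\cap\cD$.

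The only delicate point is keeping the signs and the $\perp$-swap straight in this last rearrangement; everything else is bookkeeping.
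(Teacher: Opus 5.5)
Your Steps 1 and 2 are correct and match the paper's proof: the Grassmann formula gives \eqref{expr1}, then monotonicity of $U\mapsto \cC(U^\perp,c)+\cD$ and the bound by $\rho_\cC$ give (R1) and (R2). Your reduction of (R3) to Lemma~\ref{inequality}, via $(U+V)^\perp=U^\perp\cap V^\perp$ and $(U\cap V)^\perp=U^\perp+V^\perp$, is also exactly the paper's, and the sign bookkeeping is right.

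\textbf{The gap is Step 3: Lemma~\ref{inequality} is false, and (R3) fails for $\rho_{\cC/(\cC\cap\cD)}$ in general.} Take $m=2$, any $n\ge 1$, $\cC=\F_q^{2\times n}$ and $\cD=\langle M\rangle$, where both rows of $M$ equal $(1,0,\dots,0)$. Then $\colsp(M)=\langle(1,1)^t\rangle$ and $\cC\cap\cD=\cD$.
\begin{itemize}
\item For $U=\langle\ee_1\rangle$, $V=\langle\ee_2\rangle$, the left side of \eqref{eq:ineq} is $n+n+0+1=2n+1$, while the right side is $0+2n+0+0=2n$.
\item Directly: $\rho_\cC(W)=n\dim W$ and $\rho_\cD(W)=1-\dim\cD(W^\perp,c)$. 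So $\rho:=\rho_\cC-\rho_\cD$ satisfies $\rho(\langle\ee_1\rangle)=\rho(\langle\ee_2\rangle)=n-1$, $\rho(\F_q^2)=2n-1$ and $\rho(\{\mathbf{0}\})=0$.
\item Hence $\rho(\F_q^2)+\rho(\{\mathbf{0}\})=2n-1>2n-2=\rho(\langle\ee_1\rangle)+\rho(\langle\ee_2\rangle)$, violating (R3).
\end{itemize}

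The paper's proof of Lemma~\ref{inequality} breaks in its last line. After the two Grassmann steps it has shown that the left side of \eqref{eq:ineq} is at most the right side plus $\dim(\cC\cap\cD)(U+V,c)-\dim\bigl((\cC\cap\cD)(U,c)+(\cC\cap\cD)(V,c)\bigr)$. The cited inclusion shows this correction term is $\ge 0$, which is the wrong direction. What is actually needed is
\[\dim(\cC\cap\cD)(U+V,c)-\dim\bigl((\cC\cap\cD)(U,c)+(\cC\cap\cD)(V,c)\bigr)\le\dim\cC(U+V,c)-\dim\bigl(\cC(U,c)+\cC(V,c)\bigr).\]
In the example this reads $1\le 0$. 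The reason is that $M$ is the sum of two rank-one matrices lying in $\cC(U,c)$ and $\cC(V,c)$ but not in $\cD$.

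Seen through your formula \eqref{expr1}, submodularity would require $X\mapsto\dim(\cC(X,c)+\cD)$ to be supermodular. The two available estimates, $\cC(X,c)+\cC(Y,c)+\cD\subseteq\cC(X+Y,c)+\cD$ and $(\cC(X,c)+\cD)\cap(\cC(Y,c)+\cD)\supseteq\cC(X\cap Y,c)+\cD$, push in opposite directions. In general a difference of two $q$-polymatroid rank functions is not submodular.

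So your argument, like the paper's, establishes only \eqref{expr1}, (R1) and (R2). The claim that these pairs are $(q,n)$-polymatroids is false as stated and would need additional hypotheses on $\cC$ and $\cD$.
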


\begin{proof}
We prove the statement for
$\bigl(\mathscr{L}(\F_q^m),\rho_{\faktor{\cC}{\cC\cap\cD}}\bigr)$. A similar argument applies to 
$\bigl(\mathscr{L}(\F_q^m),\rho_{\faktor{\cD}{\cC\cap\cD}}\bigr)$. By definition,
\begin{align*}
\rho_{\faktor{\cC}{\cC\cap\cD}}(U)
&=\rho_{\cC}(U)-\rho_{\cC\cap\cD}(U)\\
&=\dim\cC-\dim(\cC\cap\cD)
-\Bigl(\dim\cC(U^\perp,c)
-\dim(\cC\cap\cD)(U^\perp,c)\Bigr).
\end{align*}
for any $U  \in \mathscr{L}(\F_q^m)$.
Applying the Grassmann formula, we obtain 

\begin{align*} \rho_{\footnotesize{\faktor{\cC}{\cC\cap\cD}}}(U)&=\dim(\cC+\cD) -\dim\cD\\ &- \left (\dim( \cC(U^\perp,c))- \dim\cC (U^\perp, c)- \dim \cD + \dim (\cC(U^\perp,c) + \cD)\right ), \end{align*} which proves both the first equality in \eqref{expr1}  and that $\rho_{\faktor{\cC}{\cC\cap\cD}}(U) \geq 0$. Moreover, 
\[\rho_{\faktor{\cC}{\cC\cap\cD}}(U)
\le \rho_{\cC}(U)
\le n\dim U\]
and, hence, the property (R1) holds.\\
Now let $U,V \in \mathscr{L}(\F_q^m)$ such that $U \subseteq V$. Since $\cC(V^\perp,c) \subseteq \cC(U^\perp,c)$ and by \eqref{expr1}, we have $\rho_{\footnotesize{\faktor{\cC}{\cC\cap\cD}}}(U)\leq \rho_{\footnotesize{\faktor{\cC}{\cC\cap\cD}}}(V)$, proving (R2). Finally let $W,T$ be $\F_q$-subspaces of $\F_q^m$.\\
Finally, by definition of $\rho_{\faktor{\cC}{\cC\cap\cD}}$, the property (R3) is equivalent to
\begin{equation}\label{eq:ineq2}
\begin{aligned}
&\dim\cC(W^\perp,c)+\dim\cC(T^\perp,c)
+\dim(\cC\cap\cD)((W+T)^\perp,c)\\
&\qquad
+\dim(\cC\cap\cD)((W\cap T)^\perp,c)\\
&\le
\dim\cC((W+T)^\perp,c)
+\dim\cC((W\cap T)^\perp,c)\\
&\qquad
+\dim(\cC\cap\cD)(W^\perp,c)
+\dim(\cC\cap\cD)(T^\perp,c).
\end{aligned}
\end{equation}
for any $W,T \in \mathscr{L}(\F_q^m)$.
Since $(W+T)^\perp=W^\perp\cap T^\perp$ and $(W\cap T)^\perp=W^\perp+T^\perp$, the inequality  in \eqref{eq:ineq2} follows by Lemma~\ref{inequality}.
\end{proof}

Clearly, a similar result to Theorem~\ref{th:quotient} can also be proved by defining analogous functions, as in \eqref{quotient1} and \eqref{quotient2}, starting from the row $(q,n)$-polymatroids associated with the codes $\cC$, $\cD$, and $\cC\cap\cD$, respectively.\\

Now, let $\cC \subset \cX_{n,q}$ be an $\F_q$-linear code. By Remark~\ref{remark} and \eqref{dualrank}, we note that if $\cC$ is an alternating or a symmetric code, $\mathcal{M}^*_c[\cC]=\mathcal{M}^*_r[\cC]$. On the other hand, if $\cC \subset \Her_{n,q}$, then $\rho^*_c(U)=\rho^*_r(U^\sigma)$.
Therefore, as done in Section \ref{qpolyrestricted}, for any code $\cC$ in $\cX_{n,q}$, we will consider only the $q$-polymatroid $\mathcal{M}_c^*[\cC]$, which we denote by $\mathcal{M}^*[\cC]=(\mathscr{L}(\F_{q^\ell}^n),\rho^*)$, $\ell \in \{1,2\}$.\\

\begin{lemma}\label{lem:int}
 Let $\cC \subset \cX_{n,q}$ be a $d$-code of $\cX_{n,q}$,  and let $\cC^*$ and $\cC^\perp$ the dual and the Delsarte dual of $\cC$, respectively. If either $\cX_{n,q} \neq  \Alt_{n,q}$ or $\mathrm{char}\,\F_q \neq 2$ and $\cX_{n,q} =  \Alt_{n,q}$, we have 
$$\cC^* = \cC^\perp \cap \cX_{n,q}.$$
\end{lemma}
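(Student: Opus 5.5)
The plan is to compare the two pairings directly: for $A,B\in\cX_{n,q}$, relate the restricted inner product $\langle A,B\rangle$ used to define $\cC^*$ to the Delsarte pairing $\chi(\tr(AB^t))$ used to define $\cC^\perp$. Once the pairings are compared, the inclusion $\cC^\perp\cap\cX_{n,q}\subseteq\cC^*$ should be immediate. For the reverse inclusion we count via \eqref{size-relation}, or else we argue that the two pairings define the same kernel.

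\textbf{Symmetric case.} Here the two inner products coincide literally on $\cX_{n,q}$. By definition, for $A,B\in\Sym_{n,q}$ we have $\langle A,B\rangle=\chi(\tr(AB^t))$. Hence $B\in\Sym_{n,q}$ lies in $\cC^*$ if and only if $\chi(\tr(AB^t))=1$ for all $A\in\cC$, which holds if and only if $B\in\cC^\perp$. This gives $\cC^*=\cC^\perp\cap\Sym_{n,q}$ at once.

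\textbf{Hermitian case.} The definition again reads $\chi(\tr(AB^t))$, but now $\tr(AB^t)\in\F_{q^2}$. I would interpret $\chi$ as composed with the trace $\Tr_{q^2/q}$, both in the restricted and in the Delsarte setting (the latter for $\F_{q^2}^{n\times n}$ viewed as an $\F_q$-space). With this convention the same literal-identity argument applies. If instead one checks that for Hermitian $A,B$ the quantity $\tr(AB^t)=\sum_{i,j}a_{ij}b_{ij}$ already lies in $\F_q$ up to the conjugation convention, then the two pairings again agree. The one point to verify carefully is that the convention used in \eqref{def:delsarte_dual} for $\F_{q^2}$-matrices matches the one in the restricted definition; I expect this to be only a bookkeeping issue.

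\textbf{Alternating case, $\mathrm{char}\,\F_q\neq 2$.} This is the real content. For $A,B\in\Alt_{n,q}$,
\[
\tr(AB^t)=\sum_{i,j}a_{ij}b_{ij}=2\sum_{i<j}a_{ij}b_{ij},
\]
because the diagonal entries vanish and $a_{ji}b_{ji}=a_{ij}b_{ij}$. Therefore
\[
\chi(\tr(AB^t))=\chi'\Bigl(\sum_{i<j}a_{ij}b_{ij}\Bigr),\qquad \chi'(x):=\chi(2x).
\]
Since $2$ is invertible, $\chi'$ is again a nontrivial character, so the Delsarte condition restricted to $\Alt_{n,q}$ is the restricted duality with respect to $\chi'$. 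It remains to see that the dual does not depend on the choice of nontrivial character. This holds because $\cC$ is $\F_q$-linear: the condition $\chi(\beta(A,B))=1$ for all $A\in\cC$ is equivalent to $\beta(A,B)=0$ for all $A\in\cC$, since $\beta(\lambda A,B)=\lambda\beta(A,B)$ and a nontrivial character is trivial on no $\F_q$-line. The same remark justifies the linear-algebra reading in the other two cases.

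The main obstacle is this character-independence step together with the factor $2$. It is exactly where $\mathrm{char}\neq 2$ enters: in characteristic $2$ we would have $\tr(AB^t)=0$ identically on $\Alt_{n,q}$, so $\cC^\perp\supseteq\Alt_{n,q}$ and the equality fails. If $\cC$ is merely additive, I would instead note that $\chi'$ is $\chi\circ m_2$, and that $\cC^*$ defined via $\chi'$ equals $2^{-1}$ times $\cC^*$ defined via $\chi$, which is the same set when $\cC$ is closed under multiplication by $2$ (automatic for additive codes, since $2=1+1$ and $2^{-1}$ is a multiple of $1$ in the prime field).
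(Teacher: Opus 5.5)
Your proposal is correct and follows the paper's route: the two pairings coincide on $\Sym_{n,q}$ and $\Her_{n,q}$, and on $\Alt_{n,q}$ they differ by the unit $2$. Your identity $\tr(AB^t)=2\sum_{i<j}a_{ij}b_{ij}$ is the right one (the paper's $-\tfrac{1}{2}\tr(AB^t)$ has a harmless sign slip), and your character-independence argument supplies the step the paper leaves implicit. In the Hermitian case keep your second reading, where $\chi$ is applied directly to $\tr(AB^t)\in\F_q$ and the Delsarte character on $\F_{q^2}$ restricts to $\chi$ on $\F_q$: composing with $\Tr_{q^2/q}$ multiplies the pairing on $\Her_{n,q}$ by $2$ and makes it trivial for even $q$, so that choice is more than bookkeeping.
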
 

\begin{proof}
The result directly follows by the definition of Delsarte dual as given in \eqref{def:delsarte_dual} and by Definition \ref{def:dual}. On the other hand, if $\cX_{n,q}=\Alt_{n,q}$ and $p \neq 2$, then the result follows by observing that $\underset{1 \leq i < j \leq n}{\Sigma} a_{ij}b_{ij}= -\frac{\Tr(AB^t)}{2}$ for any $A=(a_{ij})$ and $ B=(b_{ij})$ belonging to $ \Alt_{n,q}$.
\end{proof}

Combining Theorem \ref{th:quotient} and Lemma \ref{lem:int}, we derive the following.

\begin{theorem}
 Let $\cC \subset \cX_{n,q}$ be a $d$-code of $\cX_{n,q}$,  and let $\cC^*$ and $\cC^\perp$ be the dual and the Delsarte dual of $\cC$, respectively. If $\cX_{n,q} \neq  \Alt_{n,q}$ or $q$ odd and $\cX_{n,q} =  \Alt_{n,q}$, then for any subspace $U \in \mathscr{L}(E)$, we have 
\begin{equation}\label{*down-*up}
   \rho_{\footnotesize{\faktor{\cC^\perp}{\cC^*}}}(U)=\rho^*(U)-\rho_*(U)=(\dim(\cC^\perp+ \cX_{n,q})-\dim(\cC^\perp(U^\perp,c)+\cX_{n,q})).  
\end{equation}
\end{theorem}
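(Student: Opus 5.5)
The plan is to apply Theorem~\ref{th:quotient} with $\cC$ replaced by the Delsarte dual $\cC^\perp$ and $\cD:=\cX_{n,q}$, both viewed as $\F_q$-linear codes in the ambient matrix space $\F_{q^\ell}^{n\times n}$. By Lemma~\ref{lem:int}, under the stated hypotheses (either $\cX_{n,q}\neq\Alt_{n,q}$, or $q$ odd), we have $\cC^\perp\cap\cX_{n,q}=\cC^*$. Hence the function $\rho_{\faktor{\cC^\perp}{\cC^\perp\cap\cD}}$ of \eqref{quotient1} is exactly
\[
\rho_{\faktor{\cC^\perp}{\cC^*}}(U)=\rho_{\cC^\perp}(U)-\rho_{\cC^*}(U),
\]
where both rank functions are those of the column polymatroids.

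Next I identify the two terms. The second one is $\rho_{\cC^*}=\rho_*$ by definition of $\mathcal{M}[\cC^*]$. For the first one, I invoke \cite[Theorem~8.1]{gorla2019rank}, which gives $\mathcal{M}_c[\cC^\perp]=\mathcal{M}^*_c[\cC]$, so $\rho_{\cC^\perp}=\rho^*$. Together these give the first equality in \eqref{*down-*up}. The second equality is then the first formula in \eqref{expr1} of Theorem~\ref{th:quotient}, with $\cC\mapsto\cC^\perp$ and $\cD\mapsto\cX_{n,q}$:
\[
\rho_{\faktor{\cC^\perp}{\cC^*}}(U)=\dim(\cC^\perp+\cX_{n,q})-\dim\bigl(\cC^\perp(U^\perp,c)+\cX_{n,q}\bigr).
\]

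The main point needing care is the Hermitian case. There the codes live in $\F_{q^2}^{n\times n}$ regarded as an $\F_q$-space, and subspaces $U$ are taken over $\F_{q^2}$. Both Theorem~\ref{th:quotient} and \cite[Theorem~8.1]{gorla2019rank} are stated over $\F_q$, so I must check that they carry over verbatim. The Grassmann-formula argument in Lemma~\ref{inequality} and Theorem~\ref{th:quotient} only uses \eqref{eq:relations}, which holds for $\F_{q^2}$-subspaces. I also need the Delsarte duality to be taken with respect to the trace form used for $\cC^*$, so that Lemma~\ref{lem:int} applies; Remark~\ref{remark} ensures that the column $q^2$-polymatroid is the right one. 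Finally, in the alternating case with $q$ odd, the rescaling by $-\tfrac12$ in Lemma~\ref{lem:int} does not change orthogonality, so the identification $\cC^*=\cC^\perp\cap\Alt_{n,q}$ is legitimate.
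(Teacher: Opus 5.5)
Your proposal is essentially the paper's proof. Like the paper, you apply Theorem~\ref{th:quotient} to $\cC^\perp$ and $\cD=\cX_{n,q}$, and use Lemma~\ref{lem:int} to identify $\cC^\perp\cap\cX_{n,q}=\cC^*$. You also invoke the duality $\mathcal{M}_c^*[\cC]=\mathcal{M}_c[\cC^\perp]$ of \cite[Theorem~8.1]{gorla2019rank}; the paper uses it tacitly, and you state it explicitly.

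The one caveat is the Hermitian case, which you flag but do not actually check (nor does the paper). There $\rho_{\cC^\perp}(U)=2n\dim U-\dim\cC(U)$, so $\rho_{\cC^\perp}=\rho^*$ holds only if the dual is formed with $r=2n=\dim_{\F_q}\F_{q^2}^n$, not the $r=n$ of Remark~\ref{remark}. In addition, Lemma~\ref{lem:int} requires the Delsarte dual to be taken with respect to a form such as $\mathrm{Tr}_{q^2/q}(\omega\,\tr(AB^t))$ with $\mathrm{Tr}_{q^2/q}(\omega)\neq 0$. This matters in characteristic $2$, where $\omega=1$ fails because $\mathrm{Tr}_{q^2/q}(1)=0$.
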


\begin{proof}
 Let $\mathcal{M}[\cC^*]=(\mathscr{L}(E),\rho_*)$ be the $(q^\ell,n)$-polymatroid of $\cC^*$ and $\mathcal{M}^*[\cC]=(\mathscr{L}(E),\rho^*_c)$ be the dual $(q^\ell,n)$-polymatroid of $\cC$, $\ell \in \{1,2\}$.  
By Theorem \ref{th:quotient} and Lemma \ref{lem:int}, we have that
\begin{equation*}
\begin{aligned}
\rho_{\footnotesize{\faktor{\cC^\perp}{\cC^*}}}(U)=\rho^*(U)-\rho_*(U)= \bigl( \dim(\mathcal{C}^\perp+\cX_{n,q})
- \dim\big(\mathcal{C}^\perp(U^\perp,c)+\cX_{n,q}\big) \bigr),
\end{aligned}
\end{equation*}
for any $U \in \mathscr{L}(E)$ and hence the result follows.
\end{proof}

As consequence of the result above, in general if $\cC \subset \cX_{n,q}$, then $\mathcal{M}^*[\cC]$ and $\mathcal{M}[\cC^*]$ are different $(q^\ell,n)$-polymatroids, $\ell \in \{1,2\}$. 

\begin{corollary}
   Let $\cC \subset \Sym_{n,q}$ be an $\F_q$-linear code. If $q$ is even and $\Alt_{n,q} \subset \cC$, then 
   $\mathcal{M}^*[\cC]=\mathcal{M}[\cC^*]$.
\end{corollary}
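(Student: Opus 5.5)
The plan is to apply the preceding theorem: for $\cX_{n,q}=\Sym_{n,q}$ (so $\cX_{n,q}\neq\Alt_{n,q}$ and the hypothesis holds), equation \eqref{*down-*up} gives, for every $U\in\mathscr{L}(E)$,
\[
\rho^*(U)-\rho_*(U)=\dim(\cC^\perp+\Sym_{n,q})-\dim\bigl(\cC^\perp(U^\perp,c)+\Sym_{n,q}\bigr).
\]
It therefore suffices to show that the right-hand side vanishes for all $U$. Since $\cC^\perp(U^\perp,c)\subseteq\cC^\perp$, we always have $\cC^\perp(U^\perp,c)+\Sym_{n,q}\subseteq \cC^\perp+\Sym_{n,q}$. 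So the strategy is to prove that both spaces equal $\F_q^{n\times n}$, or at least that they coincide.

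The key step is to use $q$ even together with $\Alt_{n,q}\subseteq\cC$. First we identify $\cC^\perp$ as a subspace of $\Alt_{n,q}^\perp$. With the trace form $\tr(AB^t)$, the orthogonal of $\Alt_{n,q}$ in $\F_q^{n\times n}$ consists of the matrices $B$ with $b_{ij}=b_{ji}$ for all $i\neq j$ (the diagonal is unconstrained, since alternating matrices have zero diagonal). Hence $\Alt_{n,q}^\perp=\Sym_{n,q}$, and so $\cC^\perp\subseteq\Sym_{n,q}$.

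This gives $\cC^\perp+\Sym_{n,q}=\Sym_{n,q}$. Moreover $\cC^\perp(U^\perp,c)\subseteq\cC^\perp\subseteq\Sym_{n,q}$, so $\cC^\perp(U^\perp,c)+\Sym_{n,q}=\Sym_{n,q}$ as well. The difference in \eqref{*down-*up} is therefore $0$, and $\rho^*=\rho_*$ on all of $\mathscr{L}(E)$, which is the claim.

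The main obstacle is the characteristic-$2$ subtlety. In even characteristic, $\Alt_{n,q}$ (zero-diagonal symmetric matrices) is contained in $\Sym_{n,q}$, which is why the hypothesis $\Alt_{n,q}\subset\cC\subset\Sym_{n,q}$ is meaningful. I will check carefully that the orthogonality computation above uses the Delsarte form from \eqref{def:delsarte_dual} on $\F_q^{n\times n}$. I also need the preceding theorem to apply for symmetric codes when $q$ is even; its hypothesis only excludes the alternating case with $q$ even, so this is fine. Finally, I will verify that Lemma \ref{lem:int} is used consistently, i.e. that $\cC^*=\cC^\perp\cap\Sym_{n,q}$; given the inclusion shown above, this reads $\cC^*=\cC^\perp$.

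With that, a direct alternative check is also available: since $\cC^*=\cC^\perp$, we get $\mathcal{M}[\cC^*]=\mathcal{M}_c[\cC^\perp]=\mathcal{M}_c^*[\cC]$ by \cite[Theorem~8.1]{gorla2019rank}.
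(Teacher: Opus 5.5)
Your proposal is correct and follows the paper's proof: from $\Alt_{n,q}\subseteq\cC$ you get $\cC^\perp\subseteq\Alt_{n,q}^\perp=\Sym_{n,q}$, so both sums in \eqref{*down-*up} equal $\Sym_{n,q}$ and hence $\rho^*=\rho_*$. Your closing observation also works as a slightly shorter route: the same inclusion forces $\cC^*=\cC^\perp$, and then \cite[Theorem~8.1]{gorla2019rank} gives the claim directly.
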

\begin{proof}
The result follows from the fact that, when the characteristic of the field is $2$, $\Alt_{n,q} = \Sym_{n,q}^{\perp} \subseteq \cC^{\perp} \subseteq \Sym_{n,q}$ and hence, by \eqref{*down-*up}, 
$$\dim(\cC^{\perp} + \Sym_{n,q})
= \dim(\cC^{\perp}(U^{\perp},c) + \Sym_{n,q}).$$
\end{proof}

\bigskip
\bibliographystyle{abbrv}
\bibliography{references.bib}


\vspace{1cm}

\noindent Giovanni Longobardi, Rocco Trombetti,\\
Dipartimento di Matematica e Applicazioni “Renato Caccioppoli”,\\
Università degli Studi di Napoli Federico II,\\
via Cintia, Monte S. Angelo I-80126 Napoli, Italy.\\
email: \texttt{\{giovanni.longobardi, rtrombet\}@unina.it}

\bigskip
\bigskip

\noindent Eimear Byrne,\\
School of Mathematics and Statistics, \\
Science Centre, University College Dublin,\\
Belfield Dublin 4, Ireland.\\
email: \texttt{ebyrne@ucd.ie}

\end{document}